\newcommand{\footremember}[2]{%
    \footnote{#2}
    \newcounter{#1}
    \setcounter{#1}{\value{footnote}}%
}
\newcommand{\footrecall}[1]{%
    \footnotemark[\value{#1}]%
}
\setlist[enumerate]{noitemsep,topsep=3pt}
\setlist[itemize]{noitemsep,topsep=3pt}
\theoremstyle{definition}
\newtheorem{defin}{Definition}%[section]
\newtheorem{remark}[defin]{Remark}
\newtheorem{ex}[defin]{Example}
\theoremstyle{plain}
\newtheorem{lem}[defin]{Lemma}
\newtheorem{thm}[defin]{Theorem}
\newtheorem{cor}[defin]{Corollary}
\newcommand{\U}{\mathcal{U}}
\newcommand{\cg}[1]{\overline{#1}}
\author{%
  Anni Hakanen \footnote{Corresponding author, email: anehak@utu.fi. Research partially funded by the Magnus Ehrnrooth foundation.}  \footremember{TY}{Department of Mathematics and Statistics, University of Turku, Finland}%
  \and Ville Junnila \footrecall{TY}%
  \and Tero Laihonen \footrecall{TY}%
  \and Ismael G. Yero \footremember{UC}{Department of Mathematics, Universidad de C\'{a}diz, Spain} %\footnote{Funding etc.}%
  }
\date{}
\title{On Vertices Contained in All or in No Metric Basis}
\begin{document}
\maketitle

\begin{abstract}
A set $R \subseteq V(G)$ is a \emph{resolving set} of a graph $G$ if for all distinct vertices $v,u \in V(G)$ there exists an element $r \in R$ such that $d(r,v) \neq d(r,u)$.
The \emph{metric dimension} $\dim(G)$ of the graph $G$ is the minimum cardinality of a resolving set of $G$.
A resolving set with cardinality $\dim(G)$ is called a \emph{metric basis} of $G$.
We consider vertices that are in all metric bases, and we call them basis forced vertices.
We give several structural properties of sparse and dense graphs where basis forced vertices are present.
In particular, we give bounds for the maximum number of edges in a graph containing basis forced vertices.
Our bound is optimal whenever the number of basis forced vertices is even.
Moreover, we provide a method of constructing fairly sparse graphs with basis forced vertices.
We also study vertices which are in no metric basis in connection to cut-vertices and pendants.
Furthermore, we show that deciding whether a vertex is in all metric bases is co-NP-hard, and deciding whether a vertex is in no metric basis is NP-hard. 
\medskip \\
\textbf{Keywords:} metric basis, metric dimension, resolving set, unicyclic graph, cut-vertex, pendant, NP-hard, forced vertex.
\end{abstract}

\section{Introduction}

Metric dimension in graphs is a classical invariant that has been studied from a lot of different points of view, starting from pure theoretical aspects and including some applied models mainly arising from computer science, among others.
Specifically, in the last decade, there has been an increasing number of investigations on the metric dimension of graphs, and a large number of them are specifically centered on variations of the standard metric dimension concept.
 The classical metric dimension parameter, although it is nowadays very well studied, strong interest still remains.
Indeed, a significant number of works specifically dealing with it appear frequently.
For some latest remarkable articles we suggest for instance \cite{GenesonPattern, GutinAlternative, JiangCartesian, LairdHamming, SedlarUnicyclic}.

For a given connected graph $G$, the distance $d_G(u,v)$ (the subindex can be removed if it is clear from context) between two vertices $u,v\in V(G)$ is the length of a shortest path between $u$ and $v$.
A set of vertices $R \subseteq V(G)$ is called a \emph{resolving set} for $G$ if for any pair of distinct vertices $u,v\in V(G)$, there exists a vertex $w\in R$ such that $d_G(w,u)\ne d_G(w,v)$.
In this sense, it is also said that the set $R$ \emph{resolves} the graph $G$, and that $w$ \emph{resolves} the vertices $u,v$ (or that $u,v$ are resolved by $w$).
The cardinality of the smallest possible resolving set for $G$ is
the \emph{metric dimension} of $G$, denoted by $\dim(G)$.
A resolving set of cardinality $\dim(G)$ is a \emph{metric basis} of $G$.
Such concepts were (independently) introduced in \cite{S:leavesTree} and \cite{Harary76}.

It is natural to ask if a graph contains a unique metric basis, or to ask a broader question, whether a graph has vertices belonging to every metric basis.
Such ideas were already investigated in \cite{BagheriUnique16, BuczkowskiUnique}.
For instance, it was shown in \cite{BuczkowskiUnique}, that for every integers $k,r$ with $k\ge 2$ and $0\le r\le k$, there exists a graph $G$ with metric dimension $k$ having $r$ vertices that belong to every metric basis of $G$.
However, no more contributions on this direction, concerning the classical metric dimension invariant, have appeared so far.
Such questions are widely studied in other contexts such as dominating sets and stable sets in \cite{Hammer82, CockayneHM03, Mynhardt99}, for example.
For some other variants of metric dimension, some related studies have continued.
Examples of this are \cite{Solid, HJLP:SetsOfVertices}, where forced vertices were defined as vertices that are in every $\ell$-solid-resolving set or $\{\ell\}$-resolving set -- not only the corresponding metric bases.
In this sense, it is now our goal to retrieve such ideas, for the classical metric dimension, and present an exposition of combinatorial results aimed to describe structural properties of graphs containing vertices that either belong to every, or to no metric basis of it.
For more formality in our exposition, we shall now name such kind of vertices in a graph $G$.

\begin{defin}\label{def:basisforced}
A vertex $v \in V(G)$ is a \emph{basis forced vertex} of the graph $G$ if it is contained in every metric basis of $G$.
\end{defin}

In connection with the existent models related to metric dimension in graphs, like that of navigation of robots in networks, the existence of basis forced vertices means that a metric basis cannot be formed without them.
In consequence, they need to be included at any possible set of ``landmarks'' used to uniquely identify the vertices of the graph.

Clearly, all graphs do not have basis forced vertices.
For example, the path $P_n$ has two disjoint metric bases (the endpoints), and thus no basis forced vertices.
The graph in Figure \ref{fig:exsensitiveunic} has two basis forced vertices, which are illustrated as black vertices.
This graph has a unique metric basis that consists of only these two vertices.
Consequently, the white vertices are in no metric basis.
These are the opposite of basis forced vertices, and thus we have the following definition.

\begin{defin}\label{def:void}
A vertex $v \in V(G)$ is a \emph{void vertex} of the graph $G$ if it is in no metric basis of $G$.
\end{defin}

In \cite{BorosCore}, the set of vertices belonging to all maximum stable sets of a graph is called the \emph{core} of $G$. In the same spirit, we could call the set of basis forced vertices a core for the metric bases of $G$ and the set of void vertices an anticore for the metric bases of $G$.

The vertices of any graph can be divided into three categories: basis forced vertices, void
vertices and vertices that do not fit in either category, that is, vertices that are in some metric
bases but not all.
The endpoints of a path are in some metric bases but not all, whereas the rest
of the vertices of a path are void vertices.
The vertices of a complete graph or a cycle are also all in some
bases but not all.
If a graph has a unique metric basis (like the graph in Figure \ref{fig:exsensitiveunic}), then each
of its vertices is either a basis forced vertex or a void vertex.
All three types of vertices can be
present simultaneously in the same graph.
This is the case for the graph later shown in Figure \ref{fig:sparseEx}.

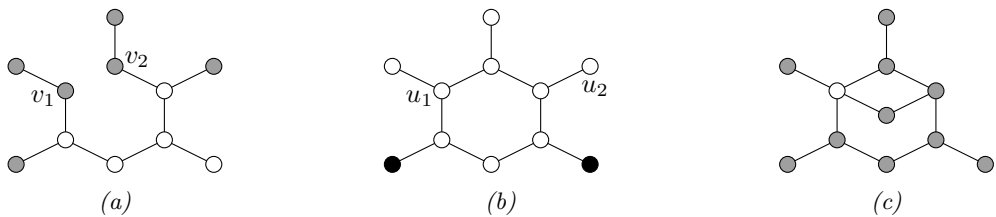
\begin{figure}[b]
\centering
\begin{subfigure}[b]{0.32\linewidth}
\centering
\begin{tikzpicture}[scale=.65]
%-----EDGES------------
 \draw (0,0) -- (1,.5) -- (1,1.5) -- (0,2) 
 		 (-1,1.5) -- (-1,.5) -- (0,0);
 \draw  (1,.5) -- (2,0)
 		(-1,.5) -- (-2,0)
 		(1,1.5) -- (2,2)
 		(-1,1.5) -- (-2,2)
 		(0,2) -- (0,3);
%-----VERTICES---------
 \draw \foreach \x in {(0,0),(1,.5),(-1,.5),(1,1.5),(2,0)} {
 	\x node[circle, draw, fill=white,
             inner sep=0pt, minimum width=6pt] {}
 };
 \draw \foreach \x in {(0,2),(2,2),(-2,2),(0,3),(-2,0),(-1,1.5)} {
 	\x node[circle, draw, fill=gray!75,
             inner sep=0pt, minimum width=6pt] {}
 };
%-----LABELS-----------
 \draw {
 	(-1.45,1.35) node[] {$v_1$}
 	(.45,2.15) node[] {$v_2$} };
\end{tikzpicture}
\caption{ }\label{fig:exsensitivetree}
\end{subfigure}
\hfill
\begin{subfigure}[b]{0.32\linewidth}
\centering
\begin{tikzpicture}[scale=.65]
%-----EDGES------------
 \draw (0,0) -- (1,.5) -- (1,1.5) -- (0,2) -- (-1,1.5) -- (-1,.5) -- (0,0);
 \draw  (1,.5) -- (2,0)
 		(-1,.5) -- (-2,0)
 		(1,1.5) -- (2,2)
 		(-1,1.5) -- (-2,2)
 		(0,2) -- (0,3);
%-----VERTICES---------
 \draw \foreach \x in {(0,0),(1,.5),(-1,.5),(1,1.5),(-1,1.5),(0,2),(2,2),(-2,2),(0,3)} {
 	\x node[circle, draw, fill=white,
             inner sep=0pt, minimum width=6pt] {}
 };
 \draw \foreach \x in {(2,0),(-2,0)} {
 	\x node[circle, draw, fill=black,
             inner sep=0pt, minimum width=6pt] {}
 };
%-----LABELS-----------
 \draw {
 	(-1.45,1.35) node[] {$u_1$}
 	(2.1,1.55) node[] {$u_2$} };
\end{tikzpicture}
\caption{ }\label{fig:exsensitiveunic}
\end{subfigure}
\hfill
\begin{subfigure}[b]{0.32\linewidth}
\centering
\begin{tikzpicture}[scale=.65]
%-----EDGES------------
 \draw (0,0) -- (1,.5) -- (1,1.5) -- (0,2) -- (-1,1.5) -- (-1,.5) -- (0,0);
 \draw  (1,.5) -- (2,0)
 		(-1,.5) -- (-2,0)
 		(1,1.5) -- (0,1) -- (-1,1.5)
 		(-1,1.5) -- (-2,2)
 		(0,2) -- (0,3);
%-----VERTICES---------
 \draw (-1,1.5) node[circle, draw, fill=white,
             inner sep=0pt, minimum width=6pt] {};
 \draw \foreach \x in {(0,0),(1,.5),(-1,.5),(1,1.5),(2,0),(-2,0),(0,2),(0,1),(-2,2),(0,3)} {
 	\x node[circle, draw, fill=gray!75,
             inner sep=0pt, minimum width=6pt] {}
 };
\end{tikzpicture}
\caption{ }\label{fig:exsensitivecube}
\end{subfigure}
\caption{Void vertices and basis forced vertices are illustrated as white and black vertices vertices, respectively. Gray vertices are in some metric basis but not all.}
\end{figure}

In \cite{Solid} and \cite{HJLP:SetsOfVertices}, forced vertices of $\ell$-solid-resolving sets for all $\ell$, and $\{\ell\}$-resolving sets for $\ell\ge 2$ were characterised.
These characterisations used the local properties of the vertices and their neighbourhoods.
In contrast, it seems that achieving a similar local characterisation for basis forced vertices is a very challenging problem (see Section \ref{sec:Complexity} for issues concerning the algorithmic complexity).
For instance, if the graph changes even slightly, the metric bases may look very different even in places that are far away from where the change occurred.
This allows to realize that basis forced vertices are very sensitive to changes in the graph.
Consider the graph in Figure \ref{fig:exsensitivetree}.
The metric dimension of this graph is 2, and the vertices that appear in some metric basis are illustrated as gray vertices.
When we add the edge $v_1v_2$ we obtain the graph illustrated in Figure \ref{fig:exsensitiveunic}.
The metric dimension of this graph is also 2.
However, this graph has only one metric basis, the elements of which are illustrated as black vertices.
Consequently, this graph has two basis forced vertices and ten void vertices.
When we add the edge $u_1u_2$ to this graph, we obtain the graph in Figure \ref{fig:exsensitivecube}.
The metric dimension of this graph is 3, and there is only one void vertex.
The rest of the vertices appear in some metric bases but not all.

We next include some terminology and notations that we shall use throughout our exposition. 
Whenever two distinct vertices $v,u \in V(G)$ are adjacent, we denote $v \sim u$ (possibly with a subindex $\sim_G$ when such distinction is necessary), and when $v$ and $u$ are not adjacent, we denote $v \not\sim u$. 
The vertex $v$ is a \emph{pendant} if there exists exactly one element $u \in V(G) \setminus \{v\}$ such that $v \sim u$.
Let $S \subseteq V(G)$.
The \emph{induced subgraph} $G[S]$ is the graph with the vertex set $S$ and the edge set consisting of the edges of $G$ that are between two elements of $S$.
Let $v\in V(G)$.
The graph $G-v$ is obtained from $G$ by removing the vertex $v$ and all edges that have $v$ as an endpoint.
The vertex $v \in V(G)$ is a \emph{cut-vertex} of a connected graph $G$ if the graph $G-v$ is disconnected.
A connected graph $G$ is \emph{unicyclic} if it contains only one cycle (i.e., the number of edges is equal to the number of vertices in $G$).
From now on, all the graphs considered are connected.

The structure of this article is as follows. In Section \ref{sec:CutPends}, we consider cut-vertices and pendants. More precisely, in Section \ref{sec:Cut-vertices} we show that cut-vertices are not basis forced vertices, and that in fact most of them are void vertices. In Section \ref{sec:Pendants}, we show that pendants are not basis forced vertices in most cases. However, we also give an example of a graph that has pendants as basis forced vertices. In Section \ref{sec:SparseDense}, we consider sparse and dense graphs and their basis forced vertices. We investigate, what are the sparsest and densest graphs that have basis forced vertices. In Section \ref{sec:Colour}, we give two bounds on the number of basis forced vertices of a graph by using a new colouring method of visualising which pairs of vertices are resolved by which elements of a metric basis. Finally, we consider the algorithmic complexity of deciding whether a vertex is a basis forced or void vertex in Section \ref{sec:Complexity}.

\section{On Cut-Vertices and Pendants}\label{sec:CutPends}

In this section, we consider cut-vertices and pendants, and establish whether they can be void vertices or basis forced vertices. 
In the first subsection, we will show that in most cases cut-vertices are void vertices.
However, not all cut-vertices are void vertices.
For example, the set $\{v_1,v_2\}$ is a metric basis of the graph in Figure \ref{fig:exsensitivetree} and it consists of only cut-vertices.
We will show that the cut-vertices that are not void vertices are not basis forced either. 
In the latter subsection, we consider pendants.
We show that pendants are not basis forced vertices in most cases.
However, there are some graphs that have pendants as basis forced vertices.

\subsection{Cut-vertices}\label{sec:Cut-vertices}

Let us first prove a lemma that we will use in this section and the section on pendants.
This lemma essentially says that if there are elements of a resolving set in multiple connected components of $G-v$ for a cut-vertex $v$, then any pair of vertices that the cut-vertex $v$ resolves is also resolved by other elements of the resolving set.
However, the set $R$ is not required to be a resolving set for the following lemma to hold.

\begin{lem}\label{lem:potatocut}
Let $G$ be a connected graph with a cut-vertex $v$.
Let $R \subseteq V(G)$ be such that there are at least two connected components in $G - v$ with elements of $R$.
If $d(v,x) \neq d(v,y)$ for some $x,y \in V(G)$, then there exists an element $r \in R$ ($r \neq v$) such that $d(r,x) \neq d(r,y)$.
\end{lem}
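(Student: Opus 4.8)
The plan is to exploit the fact that a cut-vertex $v$ separates the graph, so that every path between vertices in different components of $G-v$ must pass through $v$. First I would fix the hypothesis: let $x, y \in V(G)$ with $d(v,x) \neq d(v,y)$, and suppose for contradiction that every $r \in R$ with $r \neq v$ satisfies $d(r,x) = d(r,y)$. Let $C_1, C_2, \dots$ be the connected components of $G - v$, and by hypothesis at least two of them, say $C_1$ and $C_2$, contain elements of $R$; pick $r_1 \in R \cap C_1$ and $r_2 \in R \cap C_2$ with $r_1, r_2 \neq v$.

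**The key observation.**

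The main idea is that for any vertex $w$ in component $C_i$ and any vertex $z$ lying outside $C_i$ (i.e., $z = v$ or $z$ is in some other component), every shortest path from $w$ to $z$ passes through $v$, so $d(w,z) = d(w,v) + d(v,z)$. Now I would do a case analysis on where $x$ and $y$ lie relative to the components containing $r_1$ and $r_2$. The crucial case is when at least one of $x, y$ — say $x$ — is not in $C_1$ (meaning $x = v$ or $x$ lies in a different component). Then $d(r_1, x) = d(r_1, v) + d(v, x)$. If additionally $y \notin C_1$ as well, then $d(r_1, y) = d(r_1, v) + d(v, y)$, and since $d(v,x) \neq d(v,y)$ we get $d(r_1, x) \neq d(r_1, y)$, so $r_1$ is the desired element — contradiction. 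The remaining subtlety is when one of $x,y$ lies inside $C_1$: then $x$ cannot also lie inside $C_2$, so at least one of $x, y$ lies outside $C_2$. The symmetric argument with $r_2$ in place of $r_1$ handles this, unless \emph{both} $x$ and $y$ lie in $C_2$; but then neither lies in $C_1$, and we are back in the previous case with $r_1$. So in every configuration, at least one of $r_1, r_2$ lies in a component disjoint from $\{x, y\}$ (or $x = v$, which is even easier), and that element resolves $x$ and $y$.

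**The main obstacle.**

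The subtle point I expect to need care with is the bookkeeping of cases: ensuring that for \emph{every} possible placement of the pair $\{x,y\}$ among the components and the cut-vertex, at least one of $r_1, r_2$ sits in a component containing neither $x$ nor $y$. The cleanest way to organize this is: the set $\{x, y\}$ can intersect at most one of $C_1, C_2$ (they are disjoint), and even if it intersects one of them it can contain at most... actually $\{x,y\}$ could have one vertex in $C_1$ and one in $C_2$. In that borderline case, say $x \in C_1$, $y \in C_2$: then $r_1 \in C_1$ but $y \notin C_1$, so $d(r_1, y) = d(r_1, v) + d(v, y)$; however $d(r_1, x)$ need not decompose through $v$. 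Here I would instead argue directly: $d(r_1, x) \le d(r_1, v) \le d(r_1, v) + d(v,x) $ is not immediately helpful, so the right move is to compare via $r_2$: $d(r_2, x) = d(r_2, v) + d(v, x)$ and $d(r_2, y) = ?$ — but $y \in C_2$ so this also fails to decompose. The resolution: in this single remaining case both $x \in C_1$ and $y \in C_2$, pick \emph{any} third component or note that if $R$ meets a component $C_j$ with $j \ge 3$ we use that; otherwise all of $R$ lies in $C_1 \cup C_2 \cup \{v\}$. Then I use $r_1$: since $y \in C_2$, $d(r_1, y) = d(r_1,v)+d(v,y)$, and since $x \in C_1$, any shortest $r_1$–$y$ path and $r_1$–$x$ path — here one shows $d(r_1,x) \le d(r_1,v) = d(r_1,y) - d(v,y) < d(r_1,y)$ provided $d(v,x) < d(v,y)$; if instead $d(v,x) > d(v,y)$, symmetrically use $r_2$. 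Thus $r_1$ or $r_2$ resolves the pair, completing the contradiction. This last case is where the argument is most delicate and where I would spend the most writing effort; everything else is a routine application of the distance-decomposition identity through the cut-vertex.
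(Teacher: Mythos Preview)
Your overall strategy is the same as the paper's: use the distance decomposition $d(r,z)=d(r,v)+d(v,z)$ whenever $r$ and $z$ lie in different components of $G-v$, and in the residual case $x\in C_1$, $y\in C_2$ finish with a triangle-inequality comparison. However, the key step in your final case is written incorrectly. You claim $d(r_1,x)\le d(r_1,v)$, but this is false in general: $r_1$ and $x$ both lie in $C_1$, and $x$ can be much farther from $r_1$ than $v$ is. What you need (and presumably intended, given your ``provided $d(v,x)<d(v,y)$'' clause) is the triangle inequality $d(r_1,x)\le d(r_1,v)+d(v,x)$, which combined with $d(v,x)<d(v,y)$ and $d(r_1,y)=d(r_1,v)+d(v,y)$ yields $d(r_1,x)<d(r_1,y)$. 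With that correction the argument goes through.

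The paper organizes the cases more cleanly and avoids your detour through ``a third component'': either some component meeting $R$ contains neither $x$ nor $y$ (then any $r$ there works by the decomposition identity), or else $x$ and $y$ lie in two distinct such components $G_i,G_j$. In the latter case the paper assumes no $r\in R_i$ resolves the pair, deduces $d(v,y)<d(v,x)$ from $d(r,x)=d(r,y)=d(r,v)+d(v,y)\le d(r,v)+d(v,x)$, and then shows every $s\in R_j$ satisfies $d(s,y)\le d(s,v)+d(v,y)<d(s,v)+d(v,x)=d(s,x)$. This is exactly your corrected argument, just with the WLOG placed on which of $r_1,r_2$ fails rather than on which of $d(v,x),d(v,y)$ is smaller.
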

\begin{proof}
Let us denote the connected components of $G - v$ by $G_i$, $i \in \mathbb{N}$, and let $R_i = R \cap V(G_i)$.
Assume that $x,y \in V(G)$ are such that $d(v,x) \neq d(v,y)$.
Suppose that for some $i$ we have $R_i \neq \emptyset$ and $x,y \notin V(G_i)$.
Now for all $r \in R_i$ we have
\[
d(r,x) = d(r,v) + d(v,x) \neq d(r,v) + d(v,y) = d(r,y).
\]
Thus, there exists an element $r \in R$ such that $d(r,x) \neq d(r,y)$.

Suppose then that no such $i$ exists.
Since there are at least two connected components with elements of $R$, we have $x \in V(G_i)$ and $y \in V(G_j)$, $i \neq j$, and $R_i, R_j \neq \emptyset$.
If for some $r \in R_i$ we have $d(r,x) \neq d(r,y)$, then we are done.
Suppose that for all $r \in R_i$ we have $d(r,x) = d(r,y)$.
Now
\[
d(r,v) + d(v,y) = d(r,y) = d(r,x) \leq d(r,v) + d(v,x).
\]
Since $d(v,y) \neq d(v,x)$, we have $d(v,y) < d(v,x)$.
However, now for all $s \in R_j$, we have 
\[
d(s,y) \leq d(s,v) + d(v,y) < d(s,v) + d(v,x) = d(s,x).
\]
Thus, $d(s,x) \neq d(s,y)$ for any $s \in R_j$.
\end{proof}

Showing that the cut-vertex $v$ is not in a metric basis is most difficult when there is a path dangling from $v$, i.e. a path with one end adjacent to $v$.
More precisely, there exists a connected component $G_i$ of $G-v$ such that $G[V(G_i) \cup \{v\}] \simeq P_n$ for some $n \in \mathbb{N}$.
This path-like structure does not necessarily contain any elements of a metric basis, so we need to be careful when using Lemma \ref{lem:potatocut}.
To help us differentiate the easy cases from the hard ones, we have the following lemma that already appeared in \cite{S:leavesTree}.
We include its proof by completeness of our work, and mainly based on the difficulty of finding reference \cite{S:leavesTree}.

\begin{lem}\label{lem:components} \cite[Proposition 2]{S:leavesTree}
Let $v$ be a cut-vertex of a connected graph $G$, and let $R$ be a resolving set of $G$.
For any connected component $G_i$ of $G - v$, we have 
\[
V(G_i) \cap R = \emptyset \quad \Rightarrow \quad G[V(G_i) \cup \{v\}] \simeq P_n
\]
for some $n \geq 2$.
Moreover, there can be only one connected component $G_i$ such that $V(G_i) \cap R = \emptyset$.
\end{lem}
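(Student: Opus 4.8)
The plan is to exploit the cut-vertex distance identity already used in the proof of Lemma~\ref{lem:potatocut}. Suppose a connected component $G_i$ of $G-v$ satisfies $V(G_i)\cap R=\emptyset$, so that $R\subseteq V(G)\setminus V(G_i)$. For any $r\in R$ and any $x,y\in V(G_i)$ one has $d(r,x)-d(r,y)=d(v,x)-d(v,y)$: this is trivial if $r=v$, and if $r\neq v$ then $r$ lies in a component of $G-v$ other than $G_i$, so every $r$–$z$ path with $z\in V(G_i)$ passes through $v$ and hence $d(r,z)=d(r,v)+d(v,z)$. Since $R$ resolves $G$, for distinct $x,y\in V(G_i)$ some $r\in R$ gives $d(r,x)\neq d(r,y)$, which forces $d(v,x)\neq d(v,y)$. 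Thus $v$ alone assigns pairwise distinct distances to the vertices of $G_i$.

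Next I would pass to the induced subgraph $H=G[V(G_i)\cup\{v\}]$, which is connected because $v$ has a neighbour in each component of $G-v$. A shortest $v$–$x$ path in $G$ with $x\in V(G_i)$ stays inside $H$ (leaving $H$ would force the path to visit $v$ a second time, since $v$ separates $V(G_i)$ from the rest), so $d_H(v,x)=d(v,x)$ for all $x\in V(H)$. Writing $n=|V(H)|\geq 2$, the $n-1$ distances from $v$ to the vertices of $G_i$ are then pairwise distinct positive integers, each at most $n-1$; hence they are exactly $1,2,\dots,n-1$ and $\ecc_H(v)=n-1$. A shortest path from $v$ to a vertex at distance $n-1$ then uses all $n$ vertices of $H$, yielding an ordering $v=x_0,x_1,\dots,x_{n-1}$ with $d_H(v,x_k)=k$; any edge $x_ix_j$ with $j\geq i+2$ would give $d_H(v,x_j)\leq i+1<j$, a contradiction, so $H$ has only the edges of this path and $H\simeq P_n$.

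For the ``moreover'' part, assume two components $G_i$ and $G_j$ of $G-v$ both avoid $R$. By the first part $G[V(G_i)\cup\{v\}]$ and $G[V(G_j)\cup\{v\}]$ are paths, and since $v$ has eccentricity equal to the order minus one in each path, $v$ is an endpoint of both; let $x$ and $y$ be the neighbours of $v$ in these two paths, so $x\neq y$ and $d(v,x)=d(v,y)=1$. Repeating the distance computation of the first step (every $r\in R$ lies in neither $V(G_i)$ nor $V(G_j)$, or equals $v$) gives $d(r,x)=d(r,y)$ for every $r\in R$, contradicting that $R$ resolves $G$.

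The only mildly delicate points are the passage to the induced subgraph $H$ — namely that distances from $v$ are preserved and that $\ecc_H(v)=|V(H)|-1$ forces a chordless Hamiltonian path — together with the identification of $v$ as an endpoint of each of these paths in the uniqueness argument. Everything else is a direct reuse of the cut-vertex factorisation of distances, exactly as in Lemma~\ref{lem:potatocut}.
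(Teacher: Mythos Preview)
Your argument is correct and follows the same underlying idea as the paper's proof: both hinge on the observation that if $V(G_i)\cap R=\emptyset$, then every $r\in R$ satisfies $d(r,x)-d(r,y)=d(v,x)-d(v,y)$ for $x,y\in V(G_i)$, so that $v$ alone must distinguish all vertices of $G_i$ by distance. The paper argues the contrapositive in one line (``if $G[V(G_i)\cup\{v\}]$ is not a path, pick $x,y\in V(G_i)$ with $d(v,x)=d(v,y)$''), whereas you spell out the forward direction carefully, checking that $d_H(v,\cdot)=d_G(v,\cdot)$, that the distances from $v$ are forced to be $0,1,\dots,n-1$, and that the resulting Hamiltonian path is chordless; this extra care also makes explicit that $v$ is an endpoint of the path, which the paper leaves implicit. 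The uniqueness part is identical in both proofs: pick a neighbour of $v$ in each of two $R$-free components and observe they are unresolved.
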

\begin{proof}
Let $G_i$ be a connected component of $G - v$ such that $V(G_i) \cap R = \emptyset$.
Assume to the contrary that $G[V(G_i) \cup \{v\}] \not\simeq P_n$ for all $n \geq 2$.
Then there exist distinct $x,y \in V(G_i)$ such that $d(v,x) = d(x,y)$.
Consequently, $d(r,x) = d(r,v) + d(v,x) = d(r,v) + d(v,y) = d(r,y)$ for all $r \in R$.
Now the set $R$ is not a resolving set, a contradiction.

To prove the latter claim, suppose that $V(G_i) \cap R = \emptyset$ and $V(G_j) \cap R = \emptyset$ for some $i \neq j$.
Let $v_i \in N(v) \cap V(G_i)$ and $v_j \in N(v) \cap V(G_j)$.
Now, we have $d(r,v_i) = d(r,v) + 1 = d(r,v_j)$ for all $r \in R$, a contradiction.
\end{proof}

The following theorem considers the case where we can use Lemma \ref{lem:potatocut} straight away.
In Theorem \ref{thm:potatocutpath}, we consider the case not covered by Theorem \ref{thm:potatocutvoid}.

\begin{thm}\label{thm:potatocutvoid}
Let $G$ be a connected graph.
If $v \in V(G)$ is a cut-vertex such that 
\begin{enumerate}
\item $G - v$ has at least three connected components or 
\item $G - v$ has two connected components $G_i$ such that $G[V(G_i) \cup \{v\}] \not\simeq P_n$ for $i=1,2$,
\end{enumerate}
then $v$ is a void vertex. %Moreover, $v$ is not included in any minimal resolving set.
\end{thm}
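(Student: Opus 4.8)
The plan is to show that no metric basis $B$ of $G$ can contain the cut-vertex $v$ under either hypothesis. Suppose for contradiction that $v \in B$ for some metric basis $B$. The strategy in both cases is to locate a connected component $G_k$ of $G-v$ that contains \emph{no} element of $B$, then replace $v$ by a suitable vertex $v_k$ of $G_k$ and argue that the resulting set $B' = (B \setminus \{v\}) \cup \{v_k\}$ is still resolving; since $|B'| \le |B| = \dim(G)$, this gives a metric basis not containing $v$, contradicting the assumption that $v$ is basis forced is not what we want — rather, we directly contradict nothing yet, so instead I would aim to show $B$ is not minimum, or produce a smaller resolving set. Let me restructure: the cleaner route is to show $B \setminus \{v\}$ together with one new vertex is resolving and then that in fact $v$ was redundant, i.e. $B$ itself minus $v$ plus nothing is not quite enough, but $B'$ has the same size and avoids $v$, so $v$ is not in \emph{every} basis — which is exactly ``$v$ is not basis forced'', but the theorem claims the stronger ``$v$ is a void vertex''. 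So I actually need: for \emph{every} metric basis $B$, $v \notin B$.

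So I would argue directly. Let $B$ be any metric basis and suppose $v \in B$. By Lemma~\ref{lem:components}, at most one component of $G-v$ meets $B$ in the empty set; combined with hypothesis (1) or (2) we still have at least two components, say $G_1, G_2$, with $V(G_i) \cap B \ne \emptyset$. (Under (1): three components, at most one avoids $B$, so at least two contain elements of $B$. Under (2): if one of the two non-path components avoided $B$ that would be the unique empty component by Lemma~\ref{lem:components}, but a non-path component can be empty, so I must be careful — actually Lemma~\ref{lem:components} says an empty component \emph{must} be a path, so neither $G_1$ nor $G_2$ from hypothesis (2) can be the empty one; hence both meet $B$, and any third component may or may not meet $B$. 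Either way at least two components meet $B$.) Now consider the set $R = B \setminus \{v\}$. I claim $R$ still resolves $G$. Take distinct $x, y \in V(G)$. Since $B$ resolves them, some $r \in B$ has $d(r,x) \ne d(r,y)$; if $r \ne v$ we are done. If the only such $r$ is $v$, then $d(v,x) \ne d(v,y)$, and since $R$ still has elements in at least two components of $G - v$, Lemma~\ref{lem:potatocut} (applied with $R$) produces $r' \in R$, $r' \ne v$, with $d(r',x) \ne d(r',y)$. Hence $R = B \setminus \{v\}$ is a resolving set of size $\dim(G) - 1$, contradicting the definition of metric dimension.

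This handles both cases uniformly once we have established that at least two components of $G-v$ still contain elements of $B$ after removing $v$. The main obstacle is precisely that step: making sure hypotheses (1) and (2) each guarantee two $B$-meeting components even allowing for the one permitted empty (path) component from Lemma~\ref{lem:components}. For (1) this is immediate counting. For (2), the key point is that a component $G_i$ with $G[V(G_i)\cup\{v\}] \not\simeq P_n$ cannot be the empty component, so the two designated components of hypothesis (2) are both nonempty in $B$ — and crucially this remains true after deleting $v$ from $B$, since $v \notin V(G_i)$ for any component $G_i$. After that, the contradiction with minimality of $\dim(G)$ via Lemmas~\ref{lem:potatocut} and~\ref{lem:components} is routine, and I would not belabour the distance calculations, which are already packaged inside Lemma~\ref{lem:potatocut}.
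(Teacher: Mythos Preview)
Your final argument is correct and is essentially the paper's own proof: assume $v$ lies in a metric basis $B$, use Lemma~\ref{lem:components} together with hypothesis (1) or (2) to guarantee that at least two components of $G-v$ meet $B\setminus\{v\}$, then invoke Lemma~\ref{lem:potatocut} to conclude that $B\setminus\{v\}$ already resolves every pair, contradicting minimality. The opening paragraph about swapping $v$ for some $v_k$ is a false start you correctly abandon; you should simply delete it, since the theorem asserts $v$ is \emph{void}, and your eventual argument (that $v\in B$ forces $|B|>\dim(G)$) is exactly what is needed and exactly what the paper does.
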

\begin{proof}
Let $R$ be a resolving set of $G$ such that $v\in R$.
Let us denote the connected components of $G - v$ by $G_i$, $i \in \mathbb{N}$.
Due to our assumptions on $G - v$ and Lemma \ref{lem:components}, there are at least two connected components in $G - v$ that contain elements of $R$.

For any $x,y \in V(G)$ such that $d(v,x) = d(v,y)$ there exists an element $r \in R \setminus \{v\}$ such that $d(r,x) \neq d(r,y)$.
Suppose then that $x,y \in V(G)$ are such that $d(v,x) \neq d(v,y)$.
According to Lemma \ref{lem:potatocut}, for any such pair $x,y$ there exists an element $r \in R \setminus \{v\}$ such that $d(r,x) \neq d(r,y)$.
Thus, the set $R \setminus \{v\}$ is a resolving set, and $v$ is not in any metric basis.
\end{proof}

In many proofs, where we prove that some vertex is not a basis forced vertex, we use a replacing technique. 
Let $R$ be a metric basis of $G$.
Consider a fixed $r \in R$.
There exist some $x,y \in V(G)$ such that $d(r,x) \neq d(r,y)$ and $d(s,x) = d(s,y)$ for all $s \in R \setminus \{r\}$.
Otherwise, the set $R \setminus \{r\}$ would be a resolving set that is smaller than the metric basis $R$.
If there exists a vertex $v \in V(G)$ such that $d(v,x) \neq d(v,y)$ for all $x,y \in V(G)$ for which $r$ is the only element of $R$ that resolves them, then we can replace $r$ in $R$ with $v$ and obtain a metric basis of $G$ (i.e. the set $(R \setminus \{r\}) \cup \{v\}$ is a metric basis of $G$).
We denote $R[r \leftarrow v] = (R \setminus \{r\}) \cup \{v\}$.
Notice that if $v \in R$, then $R[r \leftarrow v] = R \setminus \{r\}$.

\begin{thm}\label{thm:potatocutpath}
Let $G$ be a connected graph.
Let $v \in V(G)$ be a cut-vertex such that $G - v $ has only two connected components $G_1$ and $G_2$.
If $G[V(G_i) \cup \{v\}] \simeq P_n$ for $i=1$ or $i=2$, then $v$ is not a basis forced vertex. %(How does this compare to the monotonicity proof?)
\end{thm}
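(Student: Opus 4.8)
The plan is a proof by contradiction: suppose $v$ is basis forced, fix an arbitrary metric basis $R$ (so $v\in R$), and manufacture from $R$ a metric basis that avoids $v$ -- such a basis contradicts the assumption. Up to relabelling, say the path side is $G_1$, i.e.\ $G[V(G_1)\cup\{v\}]\simeq P_n$. A harmless first reduction: if $G$ itself is a path, then the cut-vertex $v$ is an internal vertex of that path, hence a void vertex, so it is not basis forced and we are done. Thus we may assume $G$ is not a path, and since $G[V(G_1)\cup\{v\}]\simeq P_n$ this forces $G[V(G_2)\cup\{v\}]$ not to be a path; by Lemma \ref{lem:components}, every metric basis -- in particular $R$ -- then contains a vertex of $V(G_2)$.

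The next step is to locate $R$ precisely: I claim $R\cap V(G_1)=\emptyset$. Indeed, if $R$ also had an element in $G_1$, then $G-v$ would have two components meeting $R$, so Lemma \ref{lem:potatocut} would give, for every pair $x,y$ with $d(v,x)\neq d(v,y)$, an element of $R\setminus\{v\}$ resolving it; and a pair with $d(v,x)=d(v,y)$ is not resolved by $v$, hence is resolved by some element of $R\setminus\{v\}$ because $R$ is resolving. Then $R\setminus\{v\}$ would be a resolving set of size $\dim(G)-1$, which is impossible. So write the dangling path as $v\,w_1w_2\cdots w_{n-1}$ (with $w_{n-1}$ a leaf of $G$), put $w_0:=v$, and set $R_2:=R\setminus\{v\}\subseteq V(G_2)$.

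The heart of the argument is to show that $R':=R_2\cup\{w_{n-1}\}$ is a resolving set of $G$. Granting this, $R'$ avoids $v$ and has $|R'|\le|R_2|+1=\dim(G)$, so $R'$ is a metric basis not containing $v$ -- the desired contradiction. To check that $R'$ resolves $G$ I would use that, as $v$ is a cut-vertex, every shortest path from a vertex of $V(G_1)$ to a vertex of $V(G_2)\cup\{v\}$ passes through $v$; this yields $d(w_{n-1},w_i)=n-1-i\le n-1$ for $0\le i\le n-1$ and $d(w_{n-1},x)=(n-1)+d(v,x)\ge n$ for every $x\in V(G_2)$. Consequently $w_{n-1}$ resolves every pair meeting $\{w_0,w_1,\dots,w_{n-1}\}$; and for a pair $\{x,y\}\subseteq V(G_2)$, either $d(v,x)\neq d(v,y)$, in which case the same shift shows $w_{n-1}$ resolves it, or $d(v,x)=d(v,y)$, in which case some element of $R_2$ resolves it since $R$ does and $v$ does not. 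Since every vertex lies in $\{w_0,\dots,w_{n-1}\}$ or in $V(G_2)$, this covers all pairs.

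The only genuinely delicate point is the choice of the replacement vertex. One cannot simply delete $v$ -- that works only when $R$ meets both components, which we have just excluded -- and the seemingly natural substitute, the neighbour $w_1$ of $v$, need not resolve the ``mixed'' pairs with one endpoint in the path and the other in $G_2$. Taking instead the far endpoint $w_{n-1}$ makes all of its distances into $V(G_2)$ uniformly large, which is exactly what is needed to resolve those mixed pairs cleanly; everything else is routine distance bookkeeping built on Lemmas \ref{lem:potatocut} and \ref{lem:components}.
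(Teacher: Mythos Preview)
Your proof is correct and follows essentially the same approach as the paper: replace $v$ in the metric basis by the far endpoint of the dangling path and verify that the resulting set still resolves $G$. The paper's argument is slightly more direct---it simply shows that $d(v,x)\neq d(v,y)$ implies $d(u,x)\neq d(u,y)$ for the far endpoint $u$, so $R[v\leftarrow u]$ is a metric basis without first establishing $R\cap V(G_1)=\emptyset$; your intermediate claim is correct but unnecessary.
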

\begin{proof}
Suppose that $G[V(G_1) \cup \{v\}] \simeq P_n$ and that $R$ is a metric basis of $G$ such that $v \in R$.
Let $u$ be the vertex of $G_1$ that is furthest away from $v$ in $G$.

Let us show that $R[v \leftarrow u]$ is a metric basis of $G$.
To that end, let $x,y \in V(G)$ be such that $d(v,x) \neq d(v,y)$ (if $d(v,x) = d(v,y)$, then $d(r,x) \neq d(r,y)$ for some $r \in R \setminus \{v\}$).
It is sufficient to show that for every such $x$ and $y$ we also have $d(u,x) \neq d(u,y)$.
If $x \in V(G_1)$ or $y \in V(G_1)$ (or both), then we clearly have either $d(u,x) < d(u,y)$ or $d(u,y) < d(u,x)$, since the shortest paths from $u$ to $y$ go through $x$ or vice versa.
Suppose that $x,y \in V(G_2) \cup \{v\}$.
Now all the shortest paths from $u$ to $x$ and $y$ go through $v$.
Thus, we have
\[
d(u,x) = d(u,v) + d(v,x) \neq d(u,v) + d(v,y) = d(u,y).
\]
\end{proof}

According to Theorem \ref{thm:potatocutvoid} and the proof of Theorem \ref{thm:potatocutpath} a cut-vertex either is not in any metric basis or it can be replaced with a pendant.
Thus, the following corollary is immediate.

\begin{cor}\label{cor:potatocut}
Any finite graph has a metric basis that does not contain any cut-vertices.
\end{cor}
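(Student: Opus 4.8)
The plan is to extract the replacement operation implicit in the proofs of Theorems~\ref{thm:potatocutvoid} and~\ref{thm:potatocutpath} and apply it to a metric basis chosen to contain as few cut-vertices as possible. Concretely, among all metric bases of $G$ fix one, call it $R$, minimising the number of cut-vertices of $G$ that it contains. I would then argue that this minimum number must be zero, which gives the statement.

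Suppose not, and let $v \in R$ be a cut-vertex. Since $v$ belongs to a metric basis, $v$ is not a void vertex, so by Theorem~\ref{thm:potatocutvoid} neither of its two hypotheses can hold for $v$; as $v$ is a cut-vertex this forces $G - v$ to have exactly two connected components $G_1$ and $G_2$, with $G[V(G_i) \cup \{v\}] \simeq P_n$ for $i = 1$ or $i = 2$. This is precisely the setting of Theorem~\ref{thm:potatocutpath}, and its proof does more than show $v$ is not basis forced: taking $u$ to be the vertex of the path component farthest from $v$, it shows that $R[v \leftarrow u] = (R \setminus \{v\}) \cup \{u\}$ is again a metric basis of $G$.

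It remains to compare cut-vertex counts. The basis $R[v \leftarrow u]$ is obtained from $R$ by removing the cut-vertex $v$ and (at most) adding the vertex $u$, which is a pendant of $G$ — the endpoint of the path $P_n$ opposite $v$ — and hence not a cut-vertex, since deleting a degree-one vertex from a connected graph leaves it connected. Therefore $R[v \leftarrow u]$ contains strictly fewer cut-vertices than $R$, contradicting the minimality of $R$. Hence $R$ contains no cut-vertex, proving the corollary.

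The only point I expect to require a little care is making sure the replacement does not silently reintroduce a cut-vertex into the basis; this is handled by the single observation that $u$ is a pendant and so is never a cut-vertex. Everything else is a direct appeal to Theorems~\ref{thm:potatocutvoid} and~\ref{thm:potatocutpath} and to the replacing technique described just before Theorem~\ref{thm:potatocutpath}.
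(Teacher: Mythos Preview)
Your argument is correct and follows essentially the same route as the paper: the paper simply notes that by Theorem~\ref{thm:potatocutvoid} and the proof of Theorem~\ref{thm:potatocutpath} any cut-vertex in a metric basis is either absent (void) or replaceable by a pendant, and declares the corollary immediate. Your minimality framing is a clean way to make that replacement argument rigorous, and the key observation---that the replacement vertex $u$ is a pendant and hence not a cut-vertex---is exactly what the paper is implicitly using.
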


The graph has to be finite for Corollary \ref{cor:potatocut} to hold.
For example, the bi-infinite path consists of only cut-vertices, and thus any resolving set contains only cut-vertices.

\subsection{Pendants}\label{sec:Pendants}

Consider the graph in Figure \ref{fig:exsensitiveunic}.
This graph has a unique metric basis illustrated as black vertices in the figure.
The two pendants that form the unique metric basis are also basis forced vertices.
Thus, there does indeed exist graphs that have pendants as basis forced vertices.
However, our goal is to prove that in most cases the pendants are not basis forced vertices.

\begin{thm}\label{thm:potatopendgen}
Let $G$ be a connected graph with a pendant $u$.
Let $v$ be the cut-vertex adjacent to $u$.
If $G - \{v,u\}$ has at least two connected components, then $u$ is not a basis forced vertex.
\end{thm}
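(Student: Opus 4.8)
The plan is to suppose, for contradiction, that $R$ is a metric basis of $G$ containing the pendant $u$, and then to produce another metric basis that avoids $u$ (in fact by replacing $u$ with the cut-vertex $v$, i.e.\ showing that $R[u \leftarrow v]$ is a metric basis). Since $u$ is a pendant adjacent to $v$, for every vertex $x \in V(G)$ we have $d(u,x) = d(v,x) + 1$. Consequently, $d(u,x) = d(u,y)$ if and only if $d(v,x) = d(v,y)$, so the pair of vertices resolved by $u$ is exactly the same as the pair resolved by $v$. Thus the replacing technique described before Theorem~\ref{thm:potatocutpath} applies provided every pair $x,y$ that $u$ uniquely resolves in $R$ is also resolved by $v$ --- but since $u$ and $v$ resolve exactly the same pairs, it suffices to show that whenever $d(v,x) \neq d(v,y)$, some element of $R \setminus \{u\}$ (equivalently, of $R[u \leftarrow v] \setminus \{v\}$, together with $v$ itself) resolves $x,y$. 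Equivalently, it is enough to prove that $(R \setminus \{u\}) \cup \{v\}$ resolves every pair $x,y$ with $d(v,x) \neq d(v,y)$, and this is immediate because $v$ itself is in the new set.

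So the real content is to check that $R[u \leftarrow v]$ resolves every pair $x,y$ with $d(v,x) = d(v,y)$, using only elements of $R \setminus \{u\}$. Here is where the hypothesis that $G - \{v,u\}$ has at least two connected components enters. The graph $G - u$ has $v$ as a cut-vertex (since $G - \{v,u\} = (G-u) - v$ is disconnected), and the connected components of $(G-u)-v$ are the components of $G - \{v,u\}$; there are at least two of them. I claim that at least two of these components contain an element of $R \setminus \{u\}$. Indeed, if $R \setminus \{u\}$ met at most one component $G_i$ of $(G-u)-v$, then picking neighbours $v_j \in N(v) \cap V(G_j)$ in two distinct components $G_j$ other than $G_i$ would give $d(r,v_j) = d(r,v)+1$ for all $r \in R \setminus \{u\}$, and also $d(u, v_j) = d(u,v)+1$ is the same for both choices of $j$; hence no element of $R$ resolves these two neighbours, contradicting that $R$ is a resolving set. (One must handle the small case where there are exactly two components and one of them consists of a single neighbour $v_i$, but the same counting argument works: $R \setminus \{u\}$ cannot be confined to one component, essentially the argument in Lemma~\ref{lem:components}.)

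Having established that at least two connected components of $(G-u)-v$ contain elements of $R \setminus \{u\}$, I apply Lemma~\ref{lem:potatocut} to the graph $G - u$ with its cut-vertex $v$ and the set $R' = R \setminus \{u\}$: for any $x,y \in V(G-u)$ with $d_{G-u}(v,x) \neq d_{G-u}(v,y)$, some $r \in R'$ with $r \neq v$ resolves them in $G - u$, and since $u$ is a pendant, distances within $G - u$ agree with distances in $G$ for all such vertices. The remaining pairs are those involving $u$ itself and those with $d(v,x) = d(v,y)$; the former are handled since $u$ is a pendant (any $r \neq u$ in a component reached through $v$ distinguishes $u$ from a vertex at the same $v$-distance by the $+1$), and the latter are resolved by some $r \in R \setminus \{u\}$ because $R$ was a resolving set and $u$ only resolves pairs with $d(v,x) \neq d(v,y)$. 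Assembling these cases shows $R[u \leftarrow v]$ is a resolving set of size $\dim(G)$, hence a metric basis avoiding $u$, so $u$ is not a basis forced vertex.

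The main obstacle is the bookkeeping around distances: one has to be careful that removing $u$ does not change the relevant distances (it does not, since $u$ is a pendant and every shortest path between two vertices of $G - u$ can avoid $u$), and one has to correctly reduce the resolving condition for $R[u \leftarrow v]$ to the two regimes $d(v,x) = d(v,y)$ and $d(v,x) \neq d(v,y)$, invoking Lemma~\ref{lem:potatocut} only in the latter. The component-counting step (at least two components of $G-\{v,u\}$ meet $R \setminus \{u\}$) is the other place requiring care, but it is a direct analogue of the second half of Lemma~\ref{lem:components}.
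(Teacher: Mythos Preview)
Your approach has a genuine gap: the replacement $R[u \leftarrow v]$ need not be a resolving set. The claim that ``$d(u,x)=d(u,y)$ if and only if $d(v,x)=d(v,y)$'' is false precisely when one of $x,y$ equals $u$: for any $y\in N(v)\setminus\{u\}$ we have $d(u,u)=0\ne 2=d(u,y)$ while $d(v,u)=1=d(v,y)$. So $u$ resolves $\{u,y\}$ but $v$ does not. Your handling of ``pairs involving $u$ itself'' does not repair this: if $r\in R\setminus\{u\}$ lies in a component of $G-\{u,v\}$ different from that of $y$, then $d(r,u)=d(r,v)+1=d(r,y)$, so such $r$ do \emph{not} distinguish $u$ from $y$; and if $r$ lies in the same component, one can still have $d(r,y)=d(r,v)+1$. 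In fact, since $R$ is a metric basis, $R\setminus\{u\}$ must fail to resolve some pair, and (as the paper's argument shows) that pair is necessarily $\{u,y\}$ for some $y\in N(v)$; hence $R[u\leftarrow v]$ always fails on exactly this pair.

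The paper's proof therefore does not replace $u$ by $v$. It splits into two cases. In Case~1 (at least two components of $G-\{u,v\}$ meet $R$), it uses Lemma~\ref{lem:potatocut} to show that the \emph{only} pair uniquely resolved by $u$ is a single $\{u,y\}$ with $y\in N(v)$, and then replaces $u$ by that $y$. In Case~2 (only one component meets $R$), Lemma~\ref{lem:components} forces the other component to induce a path hanging off $v$, and $u$ is replaced by a vertex $w$ on that path; one then checks directly that $R[u\leftarrow w]$ resolves all pairs, including $\{u,y'\}$ for the neighbour $y'$ of $v$ on the path. Note also that your component-counting step (``at least two components meet $R\setminus\{u\}$'') genuinely fails in Case~2, so the case split is unavoidable.
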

\begin{proof}
Let $R$ be a metric basis of $G$ such that $u \in R$.
Let us denote the connected components of $G - \{v,u\}$ by $G_i$, $i \in \mathbb{N}$, and let $R_i = R \cap V(G_i)$.
Since $R$ is a metric basis, there exists at least one pair of vertices $x,y \in V(G)$ such that $u$ is the only element in $R$ that resolves that pair.
In other words, $d(u,x) \neq d(u,y)$ and $d(r,x) = d(r,y)$ for all $r \in R \setminus \{u\}$.

\textbf{Case 1:} Suppose that there exist two connected components $G_i$ such that $R_i \neq \emptyset$.
Let us show that the pair $x,y$ is unique.
We will first prove that either $x=u$ or $y=u$.
Suppose to the contrary that $x \neq u$ and $y \neq u$.
Now 
\[
d(v,x) = d(u,x) - 1 \neq d(u,y) - 1 = d(v,y).
\]
According to Lemma \ref{lem:potatocut} (when applied to the set $R \setminus \{u\}$), the pair $x,y$ is resolved by some $r \in R \setminus \{u\}$, a contradiction.
Thus, either $x=u$ or $y=u$.

Suppose without loss of generality that $x=u$.
Let us show that $y$ is unique.
First of all, we have $y \neq v$, since any $r \in R \setminus \{u\}$ resolves $u$ and $v$.
Thus, $y \in V(G_i)$ for some $i$.
There exists some $j \neq i$ such that $R_j \neq \emptyset$.
Since $d(r,y) = d(r,u)$ for all $r \in R_j$, we have $y \in N(v) \cap V(G_i)$.
Thus, $d(u,y) = 2$.
Suppose that $y$ is not unique, that is, there exists $y' \in N(v) \setminus \{u,y\}$ such that $d(r,y') = d(r,u)$ for all $r \in R \setminus \{u\}$.
Now we cannot resolve $y$ and $y'$ with $R$: 
\begin{align*}
d(u,y) & = 2 = d(u,y') \text{ and} \\
d(r,y) & = d(r,u) = d(r,y')
\end{align*}
for all $r \in R \setminus \{u\}$.
Thus, $y$ is unique.

Since all pairs other than $u,y$ are resolved by $R \setminus \{u\}$, the set $R[u \leftarrow y]$ is a metric basis of $G$.
Therefore, $u$ is not a basis forced vertex.

\textbf{Case 2:} Suppose then that there is only one connected component $G_i$ such that $R_i \neq \emptyset$.
Due to Lemma \ref{lem:components}, there are only two connected components $G_1$ and $G_2$.
Suppose that $R_2 = \emptyset$.
Then $G[V(G_2) \cup \{v\}] \simeq P_n$ for some $n \in \mathbb{N}$.

Let $w \in V(G_2)$.
We will show that $R[u \leftarrow w]$ is a resolving set of $G$.
Let $x,y \in V(G)$ be such that $d(u,x) \neq d(u,y)$.
Suppose that $d(v,x) \neq d(v,y)$.
According to Lemma \ref{lem:potatocut} there exists an element $r \in R [u \leftarrow w]$ such that $d(r,x) \neq d(r,y)$.
Suppose then that $d(v,x) = d(v,y)$.
Since we assumed that $d(u,x) \neq d(u,y)$, we now have either $x = u$ or $y = u$. 

Suppose without loss of generality that $x = u$.
Then $y \in N(v)$.
If $y \in V(G_2)$, then $d(w,y) < d(w,u)$, since the shortest path from $w$ to $u$ goes through $y$.
Suppose then that $y \in V(G_1)$.
Denote by $y'$ the unique element in $N(v) \cap V(G_2)$.
Since $R$ is a metric basis and $d(u,y) = d(u,y')$, we have $d(r,y) \neq d(r,y')$ for some $r \in R \setminus \{u\}$.
Since $R_2 = \emptyset$, we have $r \in R_1$.
Thus, $d(r,u) = d(r,y') \neq d(r,y)$.
Therefore, if $R_2 = \emptyset$, then the set $R[u \leftarrow w]$ is a metric basis for any $w \in V(G_2)$.
\end{proof}

In the proofs of Theorems \ref{thm:potatocutvoid} and \ref{thm:potatocutpath}, our goal was to replace the cut-vertex to something else or remove it entirely from the resolving set.
However, in the following theorem we do the opposite; we replace a pendant with a cut-vertex in a metric basis.

\begin{thm}\label{thm:potatopendant2}
Let $G$ be a connected graph such that $G \not\simeq P_n$.
Let $u$ be a pendant adjacent to a cut-vertex $v$ such that $\deg (v) = 2$.
Then $u$ is not a basis forced vertex.
\end{thm}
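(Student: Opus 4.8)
The plan is to use the replacing technique: fix a metric basis $R$ of $G$ with $u \in R$ (if no metric basis contains $u$, then $u$ is a void vertex, hence in particular not basis forced, and we are done), and show that the cut-vertex $v$ can take the place of $u$, i.e. that $R[u \leftarrow v]$ is again a metric basis. This immediately gives that $u$ is not a basis forced vertex.

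First I would pin down the local structure near $v$. Writing $N(v) = \{u, w\}$, the fact that $u$ is a pendant and $v$ a cut-vertex forces $G - v$ to have exactly two connected components: the isolated vertex $u$, and the component $H$ containing $w$ (any simple path reaching $v$ must enter $v$ through $w$, since it cannot enter through the pendant $u$). From this one reads off the distance identities used throughout: $d(u,v) = 1$, and for every $z \in V(H)$, $d(u,z) = d(v,z) + 1 = d(w,z) + 2$, because shortest paths from $u$, and from $v$, to $z$ must pass through $w$.

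Next I would determine precisely which pairs $u$ resolves but $v$ does not. A short case analysis on a pair $x, y$ with $d(u,x) \neq d(u,y)$ and $d(v,x) = d(v,y)$ shows: if $x, y \in V(H)$ the identities above give $d(v,x) \neq d(v,y)$, a contradiction; any pair containing $v$ is resolved by $v$; and if, say, $x = u$, then $d(v,x) = 1$ forces $y \in N(v)$, hence $y = w$. So the only ``bad'' pair is $\{u, w\}$, and it therefore suffices to exhibit an element of $R$ other than $u$ that resolves $\{u, w\}$.

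This last point is the only real obstacle, and it is exactly where the hypothesis $G \not\simeq P_n$ enters, through Lemma \ref{lem:components}. If $V(H) \cap R = \emptyset$, the lemma gives $G[V(H) \cup \{v\}] \simeq P_n$; but $v$ has a unique neighbour ($w$) in this subgraph, so $v$ is an endpoint of that path, and reattaching the pendant $u$ at $v$ would make $G$ itself the path $P_{n+1}$ -- contradiction. Hence there is some $r \in R \cap V(H)$; since every $r$--$v$ path runs through $w$, we get $d(r,u) = d(r,w) + 2 \neq d(r,w)$, so $r$ resolves $u$ and $w$. Consequently $v$ resolves every pair that $u$ alone resolves within $R$, so $R[u \leftarrow v]$ is a resolving set of cardinality at most $\dim(G)$, i.e. a metric basis, and $u$ is not a basis forced vertex. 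The remaining work is just the routine distance bookkeeping around the degree-$2$ cut-vertex.
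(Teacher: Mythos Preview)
Your proof is correct and follows essentially the same approach as the paper: replace $u$ by $v$ in a metric basis $R$, observe via the identity $d(u,z)=d(v,z)+1$ for $z\neq u$ that the only pair possibly lost is $\{u,w\}$, and then invoke Lemma~\ref{lem:components} (together with $G\not\simeq P_n$) to obtain an $r\in R\cap V(H)$ which, lying on the far side of $w$, resolves $\{u,w\}$. The paper's write-up is a bit terser---it collapses your case analysis into the single line ``if $x,y\in V(G)\setminus\{u\}$ then $d(v,x)=d(u,x)-1\neq d(u,y)-1=d(v,y)$''---but the argument is the same.
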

\begin{proof}
Suppose that $R$ is a metric basis of $G$ such that $u \in R$.
We will show that $R[u \leftarrow v]$ is a resolving set.

Let $x,y \in V(G)$ be such that $d(u,x) \neq d(u,y)$.
If $x,y \in V(G) \setminus \{u\}$, then $d(v,x) = d(u,x) - 1 \neq d(u,y) - 1 = d(v,y)$.
Suppose that $x=u$ and $d(v,x) = d(v,y)$.
Since $G \not\simeq P_n$, there exists an element $r \in R \setminus \{v,u\}$ due to Lemma \ref{lem:components}.
Since $\deg (v) = 2$ and $y \in N(v)$, the shortest paths from $r$ to $u$ go through $y$.
Thus, $d(r,u) \neq d(r,y)$.
\end{proof}

Based on the results on the metric bases of trees presented in \cite{Chartrand00, Khuller96, S:leavesTree}, we can see that trees do not have any basis forced vertices.
Now we can also prove that trees have no basis forced vertices with the results obtained in Sections \ref{sec:Cut-vertices} and \ref{sec:Pendants}.
According to Corollary \ref{cor:potatocut} cut-vertices are not basis forced vertices.
If the tree in question is not $P_2$, then any pendant is adjacent to a cut-vertex that fulfils the requirements of either Theorem \ref{thm:potatopendgen} or Theorem \ref{thm:potatopendant2}.
Thus, neither the pendants of a tree are basis forced vertices.

Let us then consider a general graph $G$.
Suppose that $G$ has a cut-vertex $v$.
If there is a path- or tree-like structure attached to $v$, then there are no basis forced vertices in this structure.
More precisely, let $G_i$ be a connected component of $G-v$ such that $G_i$ is isomorphic to a tree or a path of length at least 2.
According to Corollary \ref{cor:potatocut} and Theorems \ref{thm:potatopendgen} and \ref{thm:potatopendant2}, $G_i$ does not contain basis forced vertices.

Suppose that there is a pendant $u$ adjacent to $v$.
If there is a path- or tree-like structure attached to $v$, then the graph fulfils the conditions of Theorem \ref{thm:potatopendgen} or \ref{thm:potatopendant2} and $u$ is not a basis forced vertex.
Thus, if $u$ is a basis forced vertex, then every edge $\{v,x\} \in E(G)$ where $x \in V(G) \setminus \{u\}$ is along a cycle.
However, all pendants of the graph in Figure \ref{fig:exsensitiveunic} satisfy this condition, but only two of them are basis forced vertices and the rest are void vertices.

\section{Sparse and Dense Graphs}\label{sec:SparseDense}

In this section, we again consider connected graphs.
We describe a graph as sparse if it has few edges compared to the number of vertices.
Conversely, a graph is dense if it has many edges compared to the number of vertices.
We want to find out what restrictions the existence of basis forced vertices places on the number of edges.

\subsection{Sparse Graphs}\label{sec:Sparse}

As we saw in Section \ref{sec:CutPends}, trees do not have any basis forced vertices.
However, the graph in Figure \ref{fig:exsensitiveunic} has two basis forced vertices.
Thus, the sparsest graphs that have basis forced vertices are unicyclic graphs.

The basis forced vertices of the graph in Figure \ref{fig:exsensitiveunic} are pendants.
A unicyclic graph can also have a basis forced vertex along the cycle; the vertex $u$ of the graph in Figure \ref{fig:sparseEx} is such a vertex.
All metric bases of this graph are of the form $\{u,v_i,v_j\}$, where $i,j \in \{1,2,3\}$ and $i \neq j$.
Both of these example graphs have only one or two basis forced vertices.
Indeed, later in Section \ref{sec:unicyclic2} we obtain a result which states that a unicyclic graph can have at most two basis forced vertices.

We want to construct connected graphs with, say, $k$ basis forced vertices and as few edges as possible.
In this endeavour, the following construction will be useful.
The general idea and the structure of the end-result of this construction is illustrated in Figure \ref{fig:constIdea}.
Recall that $\dim (G) = 1$ if and only if $G=P_n$ \cite{Chartrand00,Khuller96}.
Thus, the resolving sets of the components $G_i$ in the following theorem have at least two elements.

\begin{thm}\label{thm:construction}
Let $G_i$ be a connected graph such that $G_i \not\simeq P_n$ for $i = 1, \ldots , k$, and let all $G_i$ be vertex disjoint.
Let $g_i$ be a fixed element of $V(G_i)$ such that $g_i$ is in some metric basis of $G_i$ (i.e. $g_i$ is not a void vertex).
Let $W$ be the graph with 
\[
V(W) = \bigcup\limits^{k}_{i=1} V(G_i)
\quad \text{and} \quad
E(W) = \{\{g_i, g_j\} \ | \ i \neq j\} \cup \bigcup\limits^{k}_{i=1} E(G_i),
\]
i.e. the graph we obtain by connecting every $g_i$ with one another. 
A set $R \subseteq V(W)$ is a metric basis of $W$ if and only if 
\[
R = \bigcup\limits_{i=1}^{k} R_i \setminus \{g_i\}
\]
 where $R_i$ is a metric basis of $G_i$ that contains $g_i$ for each $i = 1, \ldots , k$.
Consequently, 
\[ \dim (W) = \sum\limits_{i=1}^k \dim (G_i) -k. \]
\end{thm}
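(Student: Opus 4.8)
The plan is to establish a bijection between metric bases of $W$ and collections of metric bases of the $G_i$ that all contain their respective $g_i$, by carefully analyzing how distances in $W$ relate to distances in the pieces $G_i$. The key distance observation is: for $x \in V(G_i)$ and $y \in V(G_j)$ with $i \neq j$, any shortest $x$–$y$ path in $W$ must use one of the new edges $\{g_i, g_j\}$ (or pass through several $g_\ell$'s), so $d_W(x,y) = d_{G_i}(x, g_i) + 1 + d_{G_j}(g_j, y)$ when $i\neq j$, while for $x, y \in V(G_i)$ one checks $d_W(x,y) = d_{G_i}(x,y)$ (a detour through some $g_j$, $j \neq i$, would cost $d_{G_i}(x,g_i) + 2 + d_{G_i}(g_i,y) \ge d_{G_i}(x,y)$, using the triangle inequality; one should double-check it is never strictly shorter, which it isn't). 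I would prove these two facts first as the distance lemma underpinning everything.

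Next I would prove the ``if'' direction. Suppose $R = \bigcup_{i=1}^k (R_i \setminus \{g_i\})$ where each $R_i$ is a metric basis of $G_i$ containing $g_i$. Take distinct $x, y \in V(W)$. If $x, y \in V(G_i)$ for the same $i$: since $R_i$ resolves them in $G_i$ and $W$-distances agree with $G_i$-distances on $V(G_i)$, some $r \in R_i$ resolves them; if that $r$ happens to be $g_i \notin R$, then because $g_i$ is connected to every $g_j$, for any $r' \in R_j \setminus\{g_j\}$ (nonempty since $\dim(G_j)\ge 2$) we get $d_W(r',x) = d_{G_j}(r',g_j) + 1 + d_{G_i}(g_i,x)$ and similarly for $y$, so $r'$ also resolves $x,y$. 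If instead $x \in V(G_i)$, $y \in V(G_j)$ with $i \neq j$: pick any $r \in R_i \setminus \{g_i\}$; then $d_W(r,x) = d_{G_i}(r,x)$ whereas $d_W(r,y) = d_{G_i}(r,g_i) + 1 + d_{G_j}(g_j,y) \ge d_{G_i}(r,g_i) + 1 > \ecc$-type bound — more simply, compare the two: it suffices that not both can be resolved-failures, and a short case check (is $d_{G_i}(r,x)$ forced equal to $d_{G_i}(r,g_i)+1+d_{G_j}(g_j,y)$?) handles it, or one appeals to Lemma~\ref{lem:potatocut} with $g_i$ in the role of the cut-vertex. So $R$ resolves $W$, giving $\dim(W) \le \sum \dim(G_i) - k$.

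For the ``only if'' direction, let $R$ be any metric basis of $W$ and set $R_i' = R \cap V(G_i)$. First I claim $R_i' \cup \{g_i\}$ resolves $G_i$: given distinct $x,y \in V(G_i)$, some $r \in R$ resolves them in $W$; if $r \in V(G_i)$ then $d_W = d_{G_i}$ and we're done, and if $r \in V(G_j)$, $j \neq i$, then $d_W(r,x) = d_{G_j}(r,g_j) + 1 + d_{G_i}(g_i,x)$ and likewise for $y$, so $d_{G_i}(g_i,x) \neq d_{G_i}(g_i,y)$, i.e. $g_i$ resolves them. Hence $|R_i'| \ge \dim(G_i) - 1$, so $|R| = \sum |R_i'| \ge \sum(\dim(G_i) - 1) = \sum\dim(G_i) - k$. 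Combined with the upper bound, equality holds, and every inequality is tight: $|R_i'| = \dim(G_i) - 1$ for each $i$, and $R_i' \cup \{g_i\}$ is a metric basis of $G_i$ containing $g_i$. Setting $R_i = R_i' \cup \{g_i\}$ (note $g_i \notin R$ since if it were, the count would overshoot — or argue directly) gives the stated form, and the dimension formula is then immediate.

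\textbf{The main obstacle} I anticipate is the bookkeeping in the resolving-failure analysis for cross-component pairs $x \in V(G_i)$, $y \in V(G_j)$ — making sure that after the replacement $R_i \mapsto R_i \setminus \{g_i\}$ the leftover vertices genuinely still resolve everything $g_i$ used to resolve within $G_i$, and that the new ``hub'' edges propagate resolution across components. This is exactly the replacing/cut-vertex bookkeeping, so I expect to lean on Lemma~\ref{lem:potatocut} (viewing each $g_i$ as a cut-vertex of $W$, with $R$ having elements in at least two components of $W - g_i$) to discharge these cases cleanly rather than re-deriving the distance inequalities by hand. A secondary subtlety is confirming $g_i$ is never forced into $R$ itself — which follows because $g_i$ is a cut-vertex of $W$ whose removal leaves $\ge 2$ components meeting $R$, so Theorem~\ref{thm:potatocutvoid}/\ref{thm:potatocutpath}-style reasoning applies, or simply from the tight cardinality count.
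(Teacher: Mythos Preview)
Your approach is essentially the same as the paper's: both establish the resolving-set correspondence via the same two directions, use the fact that $R_i' \cup \{g_i\}$ resolves $G_i$ for the lower bound, and invoke Lemma~\ref{lem:potatocut} at the cut-vertex $g_i$ for cross-component pairs. The paper's treatment of the cross-component case in the ``if'' direction is a bit cleaner than your sketch: rather than trying a direct distance comparison with a single $r \in R_i \setminus \{g_i\}$ (which, as you noticed, does not obviously succeed), the paper observes that at least one of $g_i, g_j$ resolves $x,y$---since $d(g_i,x)=d(g_i,y)$ forces $d(g_j,x)=d(g_i,x)+1 \neq d(g_i,y)-1=d(g_j,y)$---and then applies Lemma~\ref{lem:potatocut} to whichever one works. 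Your cardinality-overshoot argument for $g_i \notin R$ is correct and slightly more explicit than the paper's version, which folds this into a resolving-set-level equivalence before specializing to metric bases.
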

\begin{figure}
\centering
\begin{subfigure}[b]{0.45\linewidth}
\centering
\begin{tikzpicture}[scale=.65]
%-----EDGES------------
 \draw (0,0) -- (1,.5) -- (1,1.5) -- (0,2) -- (-1,1.5) -- (-1,.5) -- (0,0);
 \draw (1,1.5) -- (2,2) -- (3,2)
 	   (2.7,1.3) -- (2,2) -- (2.7,2.7)
 	   (0,2) -- (0,3)
 	   (-1,1.5) -- (-2,2);
%-----VERTICES---------
 \draw \foreach \x in {(0,0),(1,.5),(-1,.5),(1,1.5),(-1,1.5),(0,2),(2,2),(-2,2),(0,3)} {
 	\x node[circle, draw, fill=white,
             inner sep=0pt, minimum width=6pt] {}
 };
 \draw (0,0) node[circle, draw, fill=black,
             inner sep=0pt, minimum width=6pt] {};
 \draw \foreach \x in {(3,2),(2.7,1.3),(2.7,2.7)} {
 	\x node[circle, draw, fill=gray!75,
             inner sep=0pt, minimum width=6pt] {}
 };
%-----LABELS-----------
 \draw {
	(0,-.4) node[] {$u$}
	(3.5,1.9) node[] {$v_2$}
	(3.2,1.2) node[] {$v_3$}
	(3.2,2.6) node[] {$v_1$}
 };
\end{tikzpicture}
\caption{A unicyclic graph with a basis forced vertex and metric dimension 3.}\label{fig:sparseEx}
\end{subfigure}
\hfill
\begin{subfigure}[b]{0.45\linewidth}
\centering
\begin{tikzpicture}[scale=.7]
 \draw (-.7,.3) -- (.7,.3) -- (-.7,1.7) -- (.7,1.7) -- (-.7,.3) 
 		-- (-.7,1.7)  (.7,.3) -- (.7,1.7);
 \draw[dashed]{
  (-1,0) ellipse (.8cm and .8cm)
  (1,0) ellipse (.8cm and .8cm)
  (-1,2) ellipse (.8cm and .8cm)
  (1,2) ellipse (.8cm and .8cm)
 };
 \draw \foreach \x in {(-.7,.3),(.7,.3),(-.7,1.7),(.7,1.7)} {
 	\x node[circle, draw, fill=white,
             inner sep=0pt, minimum width=5pt] {}
 };
%-----LABELS--------
 \draw {
  (-2.3,0) node[] {$G_3$}
  (2.3,0) node[] {$G_4$}
  (-2.3,2) node[] {$G_1$}
  (2.3,2) node[] {$G_2$}
 };
 \draw {
  (-1,0) node[] {$g_3$}
  (1,0) node[] {$g_4$}
  (-1,2) node[] {$g_1$}
  (1,2) node[] {$g_2$}
 };
\end{tikzpicture}
\caption{An example of the idea behind the construction in Theorem \ref{thm:construction}.}\label{fig:constIdea}
\end{subfigure}
\caption{ }
\end{figure}
\begin{proof}
We will first show that a set $R \subseteq V(W)$ is a resolving set of $W$ if and only if 
\[ R = \bigcup\limits_{i=1}^{k} R_i \setminus \{g_i\} \]
where $R_i$ is a resolving set of $G_i$ that contains $g_i$ for each $i = 1, \ldots , k$. 

Let $R \subseteq V(W)$ be a resolving set of $W$.
Since the vertices $g_i$ are cut-vertices of $W$ and $G_i \not\simeq P_n$ for all $i$, we have $S_i = R \cap V(G_i) \neq \emptyset$ for all $i$ due to Lemma \ref{lem:components}. 

Let us show that the set $S_i' = S_i \cup \{g_i\}$ is a resolving set of $G_i$.
Suppose to the contrary that $S_i'$ is not a resolving set of $G_i$.
Then there exist $x,y \in V(G_i)$ such that $d(s',x)=d(s',y)$ for all $s' \in S_i'$.
However, now the set $R$ is not a resolving set of $W$; we have $d(s_i,x) = d(s_i,y)$ for all $s_i \in S_i$ and 
\[ 
d(s_j,x) = d(s_j,g_i) + d(g_i,x) = d(s_j,g_i) + d(g_i,y) = d(s_j,y) 
\]
for all $s_j \in S_j$.

Let $R_i$ be a resolving set of $G_i$ that contains $g_i$ for all $i =1, \ldots , k$.
Let 
\[
R = \bigcup\limits_{i=1}^{k} R_i \setminus \{g_i\}.
\]
Let us show that $R$ is a resolving set of $W$ by showing that we can resolve every pair $x,y \in V(W)$.
We divide the proof into two cases:
\begin{enumerate}
\item $x,y \in V(G_i)$ for some $i \in \{1, \ldots , k\}$:
Since $R_i$ is a resolving set of $G_i$, there exists some $r_i \in R_i$ such that $d(r_i,x) \neq d(r_i,y)$.
If $r_i \neq g_i$, then $r_i \in R$ and we are done.
Suppose that $r_i = g_i$.
Now we can resolve the pair $x,y$ with any $r_j \in R_j$ where $j \neq i$.
Indeed, we have $d(r_j,x) = d(r_j,g_i) + d(g_i,x) \neq d(r_j,g_i) + d(g_i,y) = d(r_j,y)$ for all $r_j \in R_j$.

\item $x \in V(G_i)$ and $y \in V(G_j)$ for some $i \neq j$:
In this case, we use Lemma \ref{lem:potatocut}.
Each vertex $g_i$ is a cut-vertex in $W$.
Suppose that $d(g_i,x) = d(g_i,y)$.
Now, $d(g_j,x) = d(g_i,x) + 1 \neq d(g_i,y) - 1 = d(g_j,y)$.
Similarly, if $d(g_j,x) = d(g_j,y)$, then $d(g_i,x) \neq d(g_i,y)$. 
The graph $W-g_i$ has two connected components, neither of which is isomorphic to a path, since $G_i \not\simeq P_n$ for all $i$.
Thus, both connected components of $W - g_i$ (and similarly $W - g_j$) contain elements of $R$ according to Lemma \ref{lem:components}.
Thus, we can use Lemma \ref{lem:potatocut} for either $g_i$ or $g_j$ and obtain that for some $r \in R$ we have $d(r,x) \neq d(r,y)$. 
\end{enumerate}

Let us then consider the cardinalities of these resolving sets.
If $R$ is a metric basis of $W$, then by the equivalence shown above, we have 
$ \dim (W) = |R| \geq \sum\limits_{i=1}^k \dim (G_i) -k. $
Conversely, we can choose the resolving sets $R_i$ to be metric bases of $G_i$ for all $i=1, \ldots ,k$, and thus 
$ \dim (W) \leq \sum\limits_{i=1}^k \dim (G_i) -k. $
In conclusion, we have $ \dim (W) = \sum\limits_{i=1}^k \dim (G_i) -k$ and a set $R$ is a metric basis of $W$ if and only if 
$ R = \bigcup\limits_{i=1}^{k} R_i \setminus \{g_i\},$
where $R_i$ is a metric basis of $G_i$ that contains $g_i$ for each $i = 1, \ldots , k$.
\end{proof}

\begin{ex}
Consider the graph $G$ in Figure \ref{fig:ex:const}.
This graph has only three metric bases: $\{v_2,v_5\}$, $\{v_3,v_6\}$ and $\{v_5,v_6\}$.
Let us construct a new graph $W$ by using Theorem \ref{thm:construction} and two copies of $G$.
Let us choose $g_1=v_3$ and $g_2=v_2$.
Since there is only one metric basis that contains $v_3$ and one metric basis that contains $v_2$, the only metric basis of $W$ is $\{v^1_5,v^2_6\}$, where the superscripts indicate in which copy of $G$ the vertex is.
Thus, $W$ is a graph with a unique metric basis and metric dimension 2.
Consequently, the graph $W$ contains two basis forced vertices.

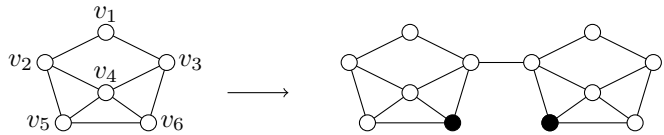
\begin{figure}[b]
\centering
\begin{tikzpicture}[scale=.8]
\draw (0,0) -- (-.7,-.5) -- (-1,.5) -- (0,1) -- (1,.5) -- (.7,-.5) 
		-- (0,0) -- (-1,.5)  (1,.5) -- (0,0)  (-.7,-.5) -- (.7,-.5);
\draw \foreach \x in {(0,0),(-.7,-.5),(.7,-.5),(1,.5),(-1,.5),(0,1)}{
    	\x node[circle, draw, fill=white,
       inner sep=0pt, minimum width=6pt] {} };
\draw {
	(0,.3) node[] {$v_4$}
	(-1.1,-.5) node[] {$v_5$}
	(1.1,-.5) node[] {$v_6$}
	(1.4,.5) node[] {$v_3$}
	(-1.4,.5) node[] {$v_2$}
	(0,1.3) node[] {$v_1$} };
\draw[->] (2,0) -- (3,0);
\begin{scope}[shift={(5,0)}]
\draw (0,0) -- (-.7,-.5) -- (-1,.5) -- (0,1) -- (1,.5) -- (.7,-.5) 
		-- (0,0) -- (-1,.5)  
		(2,.5) -- (1,.5) -- (0,0)  (-.7,-.5) -- (.7,-.5);
\draw \foreach \x in {(0,0),(-.7,-.5),(1,.5),(-1,.5),(0,1)}{
    	\x node[circle, draw, fill=white,
       inner sep=0pt, minimum width=6pt] {} };
\draw (.7,-.5) node[circle, draw, fill=black,
       inner sep=0pt, minimum width=6pt] {};
\end{scope}
\begin{scope}[shift={(8,0)}]
\draw (0,0) -- (-.7,-.5) -- (-1,.5) -- (0,1) -- (1,.5) -- (.7,-.5) 
		-- (0,0) -- (-1,.5)  (1,.5) -- (0,0)  (-.7,-.5) -- (.7,-.5);
\draw \foreach \x in {(0,0),(-.7,-.5),(.7,-.5),(1,.5),(-1,.5),(0,1)}{
    	\x node[circle, draw, fill=white,
       inner sep=0pt, minimum width=6pt] {} };
\draw (-.7,-.5) node[circle, draw, fill=black,
       inner sep=0pt, minimum width=6pt] {};
\end{scope}
\end{tikzpicture}
\caption{An example where we construct a graph with 2 basis forced vertices from a graph that has no basis forced vertices.}\label{fig:ex:const}
\end{figure}
\end{ex}

In general, there are two ways to use Theorem \ref{thm:construction}.
If we want to use $k$ graphs, we can connect them all in one go or iterate the use of Theorem \ref{thm:construction}.
In the first option, the vertices $g_i$ induce a clique in the constructed graph.
In the latter option, we first use Theorem \ref{thm:construction} on two graphs, then choose a new $g_1$ from the resulting graph and connect another graph to that.
We can also use a combination of these two methods.
The following example further demonstrates the difference of these two options.

\begin{ex}
Consider the graph $G$ in Figure \ref{fig:sparseEx}.
Let us combine three of these graphs by using Theorem \ref{thm:construction}.
Let us indicate with a superscript from which copy of the graph $G$ the vertices are from.
We choose $g_1 = v_1^1$, $g_2 = v_1^2$ and $g_3 = v_1^3$.
The constructed graph $W_1$ is illustrated in Figure \ref{fig:constclique}.
There are two metric bases of $G$ that contain $v_1$: $\{u,v_1,v_2\}$ and $\{u,v_1,v_3\}$.
Thus, the metric bases of $W_1$ are of the form $\{u^1,v_j^1,u^2,v_k^2,u^3,v_l^3\}$ where $j,k,l \in \{2,3\}$.
Thus, the metric dimension of $W_1$ is 6, and it has three basis forced vertices.

Let us then iterate Theorem \ref{thm:construction}.
Let us first use Theorem \ref{thm:construction} on two copies of $G$.
We again choose $g_1=v_1^1$ and $g_2=v_1^2$.
The metric bases of the resulting graph $W_2$ are of the form $\{u^1,v_j^1,u^2,v_k^2\}$ where $j,k \in \{2,3\}$.
Let us then use Theorem \ref{thm:construction} again, but this time on the graph $W_2$ and another copy of $G$.
We choose $g_1=v_3^2$ and $g_2=v_1^3$.
The resulting graph $W_3$ is illustrated in Figure \ref{fig:constpath}.
The only metric bases of $W_2$ that contain $v_3^2$ are of the form $\{u^1,v_j^1,u^2,v_3^2\}$ where $j \in \{2,3\}$.
Thus, the metric bases of $W_3$ are of the form $\{u^1,v_j^1,u^2,u^3,v_l^3\}$ where $j,l \in \{2,3\}$.
Now, we have constructed a graph with metric dimension 5 and three basis forced vertices.

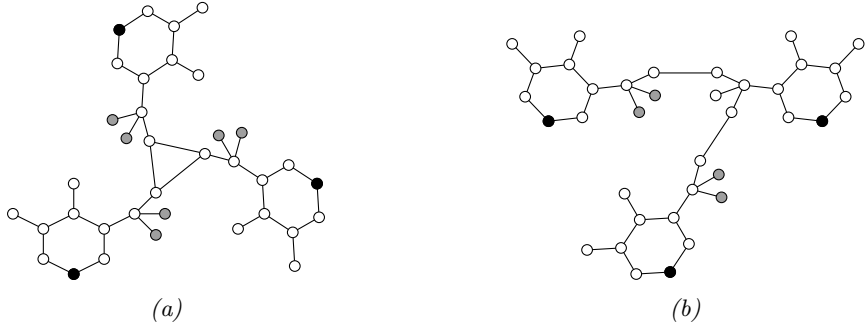
\begin{figure}
\centering
\begin{subfigure}[b]{0.45\linewidth}
\centering
\begin{tikzpicture}[scale=.8]
\def\graafi{ 
 \draw (0,0) -- (1,.5) -- (1,1.5) -- (0,2) -- (-1,1.5) -- (-1,.5) -- (0,0);
 \draw (1,1.5) -- (2,2) -- (3,2)
 	   (2.7,1.3) -- (2,2) -- (2.7,2.7)
 	   (0,2) -- (0,3)
 	   (-1,1.5) -- (-2,2);
 \draw \foreach \x in {(0,0),(1,.5),(-1,.5),(1,1.5),(-1,1.5),(0,2),(2,2),(-2,2),(0,3),(2.7,2.7)} {
 	\x node[circle, draw, fill=white,
             inner sep=0pt, minimum width=4pt] {}
 };
 \draw (0,0) node[circle, draw, fill=black,
             inner sep=0pt, minimum width=4pt] {};
 \draw \foreach \x in {(3,2),(2.7,1.3)} {
 	\x node[circle, draw, fill=gray!75,
             inner sep=0pt, minimum width=4pt] {} };
 };
 %-----ACTUAL DRAWING------
 \draw (1.35,1.35) -- (2.15,2) -- (1.25,2.2) -- cycle;
 \begin{scope}[scale=.5]
 	\graafi \end{scope}
 \begin{scope}[scale=.5,shift={(8,3)},rotate=120]
 	\graafi \end{scope}
 \begin{scope}[scale=.5,shift={(1.5,8.1)},rotate=240]
 	\graafi \end{scope}
\end{tikzpicture}
\caption{ }\label{fig:constclique}
\end{subfigure}
\begin{subfigure}[b]{0.45\linewidth}
\centering
\begin{tikzpicture}[scale=.8]
\def\graafi{ 
 \draw (0,0) -- (1,.5) -- (1,1.5) -- (0,2) -- (-1,1.5) -- (-1,.5) -- (0,0);
 \draw (1,1.5) -- (2,2) -- (3,2)
 	   (2.7,1.3) -- (2,2) -- (2.7,2.7)
 	   (0,2) -- (0,3)
 	   (-1,1.5) -- (-2,2);
 \draw \foreach \x in {(0,0),(1,.5),(-1,.5),(1,1.5),(-1,1.5),(0,2),(2,2),(-2,2),(0,3),(2.7,2.7)} {
 	\x node[circle, draw, fill=white,
             inner sep=0pt, minimum width=4pt] {}
 };
 \draw (0,0) node[circle, draw, fill=black,
             inner sep=0pt, minimum width=4pt] {};
 \draw \foreach \x in {(3,2),(2.7,1.3)} {
 	\x node[circle, draw, fill=gray!75,
             inner sep=0pt, minimum width=4pt] {} };
 };
%-----ACTUAL DRAWING--------
 \draw (1.7,.8) -- (2.8,.8)  (3.025,.15) -- (2.5,-.65);
 \begin{scope}[scale=.5,rotate=-20]
 	\graafi \end{scope}
 \begin{scope}[scale=.5,shift={(9,0)},xscale=-1,rotate=-20]
 	\draw (0,0) -- (1,.5) -- (1,1.5) -- (0,2) -- (-1,1.5) 
 		-- (-1,.5) -- (0,0);
 	\draw (1,1.5) -- (2,2) -- (3,2)
 	   (2.7,1.3) -- (2,2) -- (2.7,2.7)
 	   (0,2) -- (0,3)
 	   (-1,1.5) -- (-2,2);
 	\draw \foreach \x in {(0,0),(1,.5),(-1,.5),(1,1.5),(-1,1.5),(0,2),
 		(2,2),(-2,2),(0,3),(2.7,2.7),(3,2),(2.7,1.3)} {
 	\x node[circle, draw, fill=white,
             inner sep=0pt, minimum width=4pt] {}
 	};
 	\draw (0,0) node[circle, draw, fill=black,
             inner sep=0pt, minimum width=4pt] {}; 	
 \end{scope}
 \begin{scope}[scale=.5,shift={(4,-5)},rotate=30]
 	\graafi \end{scope}
\end{tikzpicture}
\caption{ }\label{fig:constpath}
\end{subfigure}
\caption{An example of the two ways we can use Theorem \ref{thm:construction}.}
\end{figure}

The graph $W_3$ has one edge less than the graph $W_1$.
If we use the construction on more graphs, then this difference will only grow.
Indeed, if we used $m$ copies of the graph $G$, then the first method would produce $m(m-1)$ edges to the resulting graph in addition to the edges already present in $G$.
Thus, the resulting graph would have $12m + m(m-1)$ edges.
However, if we iterate the construction, then each step adds only one edge in addition to those present in $G$.
If we iterate the construction on $m$ copies of $G$, we will use Theorem \ref{thm:construction} $m-1$ times, and the resulting graph has only $12m + m - 1$ edges. 
\end{ex}

We have now demonstrated a way to construct a graph that has $k$ basis forced vertices, $n=12k$ vertices and $n + k-1$ edges.
So far, we have not found any sparser graph with $k$ basis forced vertices.

\subsubsection{An Upper Bound for the Number of Basis Forced Vertices of Unicyclic Graphs}\label{sec:unicyclic2}

According to the terminology given in \cite{PoissonUnicyclic}, a unicyclic graph of type 1 is a unicyclic graph $G$ of maximum degree 3 and such that every vertex of maximum degree belongs to the unique cycle of $G$.
Any other unicyclic graph is called there as of type 2.
It was proved in \cite{PoissonUnicyclic} that any unicyclic graph of type 1 with unique cycle $C_n$ has metric dimension 2 when $n$ is odd, and otherwise its metric dimension is between 2 and 3.
We notice the following straightforward observation (see \cite{PoissonUnicyclic}).

\begin{remark}
If $G$ is a unicyclic graph of type 1 with unique cycle $C=v_0v_1\cdots v_{r-1}v_0$, $r\ge 3$, then any set $S=\{v_i,v_{i+1},v_{i+\left\lfloor r/2\right\rfloor}\}$ (operations with subindex are done modulo $r$) is a resolving set for $G$.
\end{remark}

As a consequence of the remark above, if $G$ is a unicyclic graph of type 1 with metric dimension 3, then $G$ does not contain basis forced vertices.
Thus, a unicyclic graph of type 1 can have at most two basis forced vertices.
We next consider unicyclic graphs of type 2.
From \cite{PoissonUnicyclic}, lower and upper bounds for the metric dimension of such unicyclic graphs are known.
However, from \cite{SedlarUnicyclic}, we can easier proceed with our deduction.
For a unicyclic graph $G$, let $b(G)$ be the number of vertices on the cycle that have something other than one pendant or path attached to it outside of the cycle.
Furthermore, let 
\[ L(G) = \sum\limits_{v \in V(G), \ell (v) >1} (\ell (v) - 1), \]
where $\ell (v)$ is the number of pendants and paths attached to $v$.
The following result was proved in \cite{SedlarUnicyclic}.

\begin{thm}{\em\cite{SedlarUnicyclic}}\label{unicyclic}
Let $G$ be an unicyclic graph.
Then $\dim(G)$ has value $L(G)+\max\{2-b(G),0\}$ or $L(G)+\max\{2-b(G),0\}+ 1$.
\end{thm}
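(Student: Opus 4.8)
The plan is to split $\dim(G)$ into a \emph{tree part}, responsible for the term $L(G)$, and a \emph{cycle part}, responsible for $\max\{2-b(G),0\}$, with the additive $1$ absorbing the unavoidable slack. Write the unique cycle of $G$ as $C=v_0v_1\cdots v_{r-1}v_0$ and, for each $i$, let $T_i$ be the (possibly trivial) tree of $G$ rooted at $v_i$, so that $V(G)=\bigcup_i V(T_i)$ is a disjoint union. The one structural fact I would lean on throughout is that every shortest path leaving $T_i$ passes through $v_i$; in particular $d(r,v_j)=d(r,v_i)+d_C(v_i,v_j)$ whenever $r\in V(T_i)$ and $v_j\in V(C)$, where $d_C$ denotes the distance inside $C$.

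For the lower bound, let $R$ be any resolving set. First I would bound the tree part: for every vertex $m$ with $\ell(m)\ge 2$, the vertex $m$ is a cut-vertex and each of its $\ell(m)$ pendant legs is a component of $G-m$ which together with $m$ induces a path, so Lemma~\ref{lem:components} forces all but at most one of these legs to meet $R$. Since legs of distinct such vertices are vertex-disjoint (an internal vertex of a leg has degree $2$), this gives $|R|\ge\sum_{\ell(m)\ge 2}(\ell(m)-1)=L(G)$, with the corresponding vertices lying inside the trees $T_i$. Next I would bound the cycle part: call $v_i$ \emph{occupied} if $v_i\in R$ or $R\cap V(T_i)\neq\emptyset$, and observe that a single occupied position cannot resolve $C$ (the two neighbours of that position on $C$ remain unresolved), so there are at least two occupied positions. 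Every cycle vertex counted by $b(G)$ carries either two legs or a branching tree, hence — again by Lemma~\ref{lem:components} — is forced to be occupied by a vertex already counted in $L(G)$; any further occupied position lies on $C$ or in a single-path tree, which contributes $0$ to $L(G)$, and therefore costs a fresh element of $R$. Hence $|R|\ge L(G)+\max\{2-b(G),0\}$.

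For the upper bound, I would construct a resolving set $R$ of size $L(G)+\max\{2-b(G),0\}+1$. Put into $R$ one leaf from each of $\ell(m)-1$ of the legs of every vertex $m$ with $\ell(m)\ge 2$; this contributes $L(G)$ vertices which — exactly as in the classical tree formula, using that each such $m$ keeps an $R$-vertex outside its one ``free'' leg (cf.\ the mechanism of Lemma~\ref{lem:potatocut}) — resolve every pair of vertices lying in a common $T_i$, and which already occupy the $b(G)$ cycle positions counted by $b(G)$. Then add $\max\{2-b(G),0\}+1$ further cycle vertices, at previously unoccupied positions chosen in general position (in particular no two occupied positions antipodal when $r$ is even); this makes the total number of occupied positions at least $3$. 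Three occupied positions in general position resolve every pair of cycle vertices irrespective of the parity of $r$ (on $\mathbb{Z}_r$ no two vertices are simultaneously symmetric about three distinct points), and the remaining mixed pairs — a cycle vertex against a tree vertex, or two vertices in distinct trees — are resolved because the distances to the occupied positions recover, for each vertex, both its cyclic foot $v_i$ and its depth inside $T_i$. Thus $R$ is resolving, giving $\dim(G)\le L(G)+\max\{2-b(G),0\}+1$, and combining the two bounds yields the claim.

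The main obstacle is the cycle part, and specifically the branch-recovery step in the upper bound: showing that a bounded pool of well-placed cycle vertices (the forced ones, the extras, and the single safety vertex) always suffices to tell apart vertices sitting in different trees is exactly where the parity of $r$ and the relative positions of the occupied cycle vertices interact delicately, and where one must check that a ``general position'' choice is always available given the already-forced occupied positions. Determining which of the two stated values is actually attained for a given $G$ would need a still finer analysis, which is precisely what the two-valued statement is designed to sidestep.
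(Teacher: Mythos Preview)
The paper does not prove this theorem. It is quoted verbatim from \cite{SedlarUnicyclic} (``The following result was proved in \cite{SedlarUnicyclic}'') and then used as a black box to bound the number of basis forced vertices in a unicyclic graph. There is therefore no proof in the paper to compare your attempt against.

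As for the sketch itself: your lower bound is essentially the right argument and is close to complete. The point that the $L(G)$ forced elements all sit in trees $T_i$ rooted at cycle vertices counted by $b(G)$, so that any further occupied cycle position genuinely costs a new element, is the correct bookkeeping; it just wants one more sentence making explicit that legs attached to vertices \emph{off} the cycle still lie inside some $T_i$ with $v_i$ contributing to $b(G)$.

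The upper bound is where you have a real gap, and you identify it yourself. The assertion that three occupied cycle positions ``in general position'' let one recover, for every vertex, both its cyclic foot $v_i$ and its depth in $T_i$ is not automatic: for $x\in T_i$ and $y\in T_j$ and an occupied position $v_k\notin\{v_i,v_j\}$, one has $d(r,x)=d(r,y)$ iff $d_C(v_k,v_i)+\mathrm{depth}(x)=d_C(v_k,v_j)+\mathrm{depth}(y)$, and arranging that no such coincidence survives all occupied $v_k$ simultaneously is exactly the content you still owe. You also need to check that a ``general position'' choice of the extra $\max\{2-b(G),0\}+1$ cycle vertices is always available given the already-forced $b(G)$ occupied positions; this is not hard but is not free either. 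Until that step is filled in, the upper bound is a plan rather than a proof.
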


We must remark that the set of vertices of $G$ that contributes to the value of $L(G)$ is a set that can be partitioned into subsets each of cardinality at least two, and one can always leave any one vertex from each of these subsets out of a metric basis of $G$. 
These facts and the way of computing $L(G)$, allow to deduce that no vertex of such set is basis forced. 
Thus, since $b(G)\ge 0$, we have $0\le \max\{2-b(G),0\}\le 2$, and the two possible values for $\dim(G)$, from the theorem above, leads to observe that $G$ could have between 0 and 3 basis forced vertices. 
However, if there are 3 basis forced vertices in $G$, then it must happen that $\dim(G)=L(G)+\max\{2-b(G),0\}+ 1$ with $b(G)=0$. 
But then it follows that every vertex of $G$ has degree at most 3 and every vertex of degree 3 belongs to the unique cycle of $G$. 
Consequently, $G$ is a unicyclic graph of type 1 (according to \cite{PoissonUnicyclic}), and we have seen that such graphs can have at most 2 basis forced vertices.
Thus, we have the following corollary.

\begin{cor}
If $G$ is a unicyclic graph, then it can have at most 2 basis forced vertices.
\end{cor}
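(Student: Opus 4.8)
The plan is to follow exactly the line of reasoning assembled in the paragraphs above, organising it around the dichotomy between unicyclic graphs of type~1 and type~2. First I would dispose of the type~1 case. If $G$ is of type~1 with unique cycle $C = v_0v_1\cdots v_{r-1}v_0$, then its metric dimension is $2$ or $3$. If $\dim(G)=2$ the claim is trivial, since every metric basis has only two elements. If $\dim(G)=3$, the preceding remark provides, for every index $i$, the resolving set $S_i=\{v_i,v_{i+1},v_{i+\lfloor r/2\rfloor}\}$ (indices modulo $r$); each $S_i$ has cardinality $\dim(G)=3$ and is therefore a metric basis, and since $\bigcap_i S_i=\emptyset$, no vertex lies in all of them. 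Hence no type~1 unicyclic graph has more than two basis forced vertices.

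For $G$ of type~2 I would invoke Theorem~\ref{unicyclic}. Writing $m=\max\{2-b(G),0\}$, we have $\dim(G)\in\{L(G)+m,\,L(G)+m+1\}$. The structural input I need is the one highlighted after Theorem~\ref{unicyclic}: the set $A\subseteq V(G)$ of vertices contributing to $L(G)$ partitions into blocks of cardinality at least two out of each of which a metric basis may omit one vertex; consequently no vertex of $A$ is basis forced, while every metric basis $R$ of $G$ satisfies $|R\cap A|\ge L(G)$ (a basis must spend $L(G)$ of its elements resolving the pendants and paths hanging off the cycle). Therefore, for every metric basis $R$, all basis forced vertices lie in $R\setminus A$, so their number is at most $\dim(G)-L(G)\le m+1$. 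Since $G$ is of type~2 we have $b(G)\ge 1$, hence $m\le 1$ and the number of basis forced vertices is at most $2$. Combining the two cases gives the corollary. (Equivalently, and without the case split, one always has at most $\dim(G)-L(G)\le m+1\le 3$ basis forced vertices; if this value $3$ were attained we would need $m=2$, i.e.\ $b(G)=0$, together with $\dim(G)=L(G)+3$, but $b(G)=0$ already forces every vertex to have degree at most $3$ with all degree-$3$ vertices on the cycle, i.e.\ $G$ is of type~1 \cite{PoissonUnicyclic}, contradicting the first part.)

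I expect the genuine obstacle to be the structural claim used in the type~2 case: that the vertices counted by $L(G)$ are never basis forced and that every metric basis contains at least $L(G)$ of them. This is what upgrades Theorem~\ref{unicyclic} from an equality about $\dim(G)$ to information about which individual vertices can be forced, and it has to be extracted from the description of metric bases of unicyclic graphs in \cite{SedlarUnicyclic} — or reconstructed directly, by observing that inside a vertex $v$ with $\ell(v)>1$ one is always free to choose which one of its $\ell(v)$ attached pendants/paths keeps no basis vertex, and to slide the basis vertex towards the far end within each of the others. The remaining points (the equivalence between $b(G)=0$ and $G$ being of type~1, needed only for the direct route, and the trivial $\dim(G)=2$ subcase) are routine once the definitions are unwound.
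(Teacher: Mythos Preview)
Your proposal is correct and follows essentially the same approach as the paper. The only cosmetic difference is that your primary route for type~2 uses the contrapositive (``type~2 $\Rightarrow b(G)\ge 1 \Rightarrow m\le 1$'') directly, whereas the paper first bounds the number of basis forced vertices by $m+1\le 3$ and then rules out the value $3$ via the implication ``$b(G)=0\Rightarrow$ type~1''---but you include exactly that argument in your parenthetical, so both routes are present. You also correctly identify that the structural claim about the vertices contributing to $L(G)$ (none basis forced, and at least $L(G)$ of them in every metric basis) is the load-bearing input imported from \cite{SedlarUnicyclic}; the paper asserts this in prose in the paragraph preceding the corollary rather than proving it, just as you propose to do.
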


Unicyclic graphs with 1 and 2 basis forced vertices are illustrated in Figures \ref{fig:sparseEx} and \ref{fig:exsensitiveunic}, respectively.

\subsection{Dense Graphs}\label{sec:Dense}

We want to find out, how many edges a graph can have and still contain basis forced vertices.
We will show that a graph with $n$ vertices and some basis forced vertices can have at most $\frac{n(n-1)}{2} -4$ edges.
We will also show that the graph attaining this bound is unique for each $n$.
To obtain these results, we will consider graphs that can be constructed from $K_n$ by removing up to four edges.
The relevant edge deletion patterns (i.e. connected components of the complement graph) are illustrated in Figure \ref{fig:denseclean}.

Distinct vertices $v,u \in V(G)$ are \emph{true twins} if $N[v]=N[u]$, and \emph{false twins} if $N(v)=N(u)$.
We simply say that two vertices are twins if they are true or false twins but it does not matter which. 
As the graphs we consider are quite dense, the graphs will most likely contain twins.
The following lemma follows directly from Corollary 2.4 of \cite{Hernando10}, although an independent proof would not be too difficult either.

\begin{lem}\label{lem:twin}
Let $G$ be a connected graph, and let $T\subseteq V(G)$ be such that 
all its elements are true twins with one another or all its elements false twins with one another. 
If $|T| \geq 2$, then $|R \cap T| \geq |T| -1$ for all resolving sets $R$ of $G$.
\end{lem}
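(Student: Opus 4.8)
The plan is to argue directly that at most one element of $T$ can be left out of any resolving set. The key point is that any two distinct vertices $u, v \in T$ are mutually unresolved by \emph{every} vertex outside $\{u, v\}$. Indeed, suppose $u, v$ are true twins, so $N[u] = N[v]$, and let $w \in V(G) \setminus \{u, v\}$. If $w$ is adjacent to one of $u, v$ it is adjacent to both (since $N[u] = N[v]$), so $d(w, u) = d(w, v) = 1$; otherwise a shortest $w$--$u$ path has some vertex $z$ adjacent to $u$ with $z \neq v$ (the penultimate vertex; if the penultimate vertex were $v$ then $w$ would be adjacent to $v$, hence to $u$, contradiction unless the path has length $2$, in which case $z$ is still adjacent to $u$ and we may take $z \neq v$ unless $z = v$, which again forces $w \sim u$), and since $N(z) \ni u$ and $N[u] = N[v]$ we get $z \sim v$, so $d(w, v) \leq d(w, z) + 1 = d(w, u)$; by symmetry $d(w, u) = d(w, v)$. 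The false-twin case is identical except one checks $w \not\sim u$ and $w \not\sim v$ directly from $N(u) = N(v)$ when $w \neq u, v$, and the same shortest-path argument gives equality. I would phrase this cleanly as a sublemma: for twins $u, v$ and any $w \notin \{u, v\}$, $d(w, u) = d(w, v)$.

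Granting the sublemma, the main argument is short. Let $R$ be a resolving set, and suppose for contradiction that $|R \cap T| \leq |T| - 2$, i.e.\ there exist two distinct vertices $u, v \in T \setminus R$. Then no element of $R$ lies in $\{u, v\}$, so by the sublemma $d(r, u) = d(r, v)$ for every $r \in R$, contradicting the assumption that $R$ resolves the pair $u, v$. Hence $|R \cap T| \geq |T| - 1$.

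The only real obstacle is the careful handling of shortest paths in the sublemma — specifically, ensuring that when $w$ is at distance $\geq 2$ from $u$, one can locate a vertex on a shortest $w$--$u$ path that is adjacent to $u$ and distinct from $v$, so that twinship transfers adjacency to $v$. The cleanest way around this is to note that if $d(w,u) \geq 2$ then the vertex preceding $u$ on a shortest $w$--$u$ path is some $z$ with $z \sim u$ and $d(w,z) = d(w,u) - 1$; if $z = v$ then $w \sim v$ would follow from continuing the path, but more simply $d(w,u) = d(w,v) + 1$, and then symmetrically $d(w,v) = d(w,u)+1$ or $\le d(w,u)$ — so instead one argues $N[u]\setminus\{v\} = N[v]\setminus\{u\}$ (true twins) or $N(u) = N(v)$ (false twins) implies $u$ and $v$ have the same distance to every other vertex by a one-line induction on distance. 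Alternatively, as the statement notes, this is Corollary~2.4 of \cite{Hernando10}, so one may simply cite it; I would include the self-contained argument above for completeness.
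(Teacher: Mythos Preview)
Your argument is correct. The paper does not actually prove this lemma: it simply notes that the result follows from Corollary~2.4 of \cite{Hernando10} and remarks that an independent proof would not be difficult. Your proposal supplies exactly such an independent proof. The main deduction---if two twins $u,v\in T$ both lie outside $R$ then no $r\in R$ distinguishes them---is clean and correct. The supporting sublemma (that $d(w,u)=d(w,v)$ for every $w\notin\{u,v\}$ when $u,v$ are twins) is a standard fact; your write-up of it is admittedly tangled in the true-twin case where the penultimate vertex might be $v$, but you correctly diagnose the issue and offer the right fixes (the one-line induction on distance, or simply citing \cite{Hernando10}). Either route completes the proof.
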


Notice that if some resolving set contains only $|T|-1$ elements of the set $T$, then it does not matter which twin is left out of the resolving set.
Indeed, the elements of the set $T$ are isomorphic with one another and have the exact same properties.

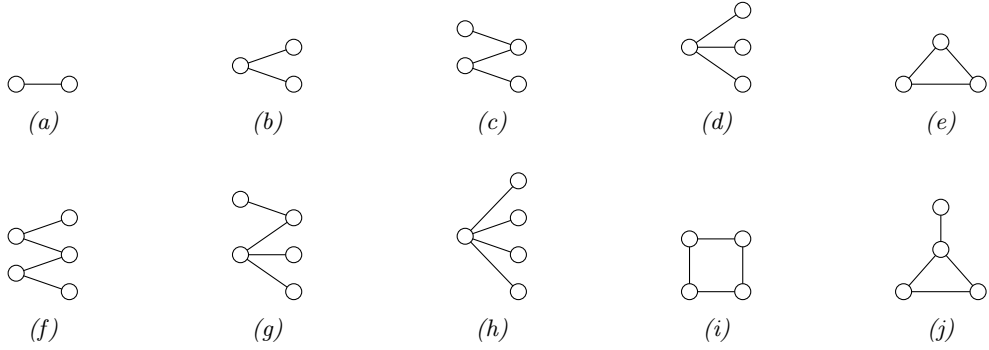
\begin{figure}
\centering
%----------------
%--FIRST LINE----
%----------------
\begin{subfigure}[b]{0.19\linewidth}
\centering
\begin{tikzpicture}[scale=.7]
\draw (0,0) -- (1,0);
\draw \foreach \x in {(0,0),(1,0)} {
 		\x node[circle, draw, fill=white,
                        inner sep=0pt, minimum width=6pt] {}
 	};
\end{tikzpicture}
\caption{ }\label{fig:densecleanK2}
\end{subfigure}
\vspace{3pt}
\begin{subfigure}[b]{0.19\linewidth}
\centering
\begin{tikzpicture}[scale=.7]
\draw (1,0)--(0,.35)--(1,.7);
\draw \foreach \x in {(0,.35),(1,0),(1,.7)} {
 		\x node[circle, draw, fill=white,
                        inner sep=0pt, minimum width=6pt] {}
 	};
\end{tikzpicture}
\caption{ }\label{fig:densecleanK12}
\end{subfigure}
\vspace{3pt}
\begin{subfigure}[b]{0.19\linewidth}
\centering
\begin{tikzpicture}[scale=.7]
\draw (1,0)--(0,.35)--(1,.7)--(0,1.05);
\draw \foreach \x in {(1,0),(0,.35),(1,.7),(0,1.05)} {
 		\x node[circle, draw, fill=white,
                        inner sep=0pt, minimum width=6pt] {}
 	};
\end{tikzpicture}
\caption{ }\label{fig:densecleanP4}
\end{subfigure}
\vspace{3pt}
\begin{subfigure}[b]{0.19\linewidth}
\centering
\begin{tikzpicture}[scale=.7]
\draw \foreach \x in {(1,0),(1,.7),(1,1.4)} {
		(0,.7) -- \x
	};
\draw \foreach \x in {(0,.7),(1,0),(1,.7),(1,1.4)} {
 		\x node[circle, draw, fill=white,
                        inner sep=0pt, minimum width=6pt] {}
 	};
\end{tikzpicture}
\caption{ }\label{fig:densecleanK13}
\end{subfigure}
\vspace{3pt}
\begin{subfigure}[b]{0.19\linewidth}
\centering
\begin{tikzpicture}[scale=.7]
\draw (0,0)--(.7,.8)--(1.4,0)--(0,0);
\draw \foreach \x in {(0,0),(.7,.8),(1.4,0)} {
 		\x node[circle, draw, fill=white,
                        inner sep=0pt, minimum width=6pt] {}
 	};
\end{tikzpicture}
\caption{ }\label{fig:densecleanK3}
\end{subfigure}
\vspace{3pt}
%----------------
%--SECOND LINE---
%----------------
\begin{subfigure}[b]{0.19\linewidth}
\centering
\begin{tikzpicture}[scale=.7]
\draw (1,0)--(0,.35)--(1,.7)--(0,1.05)--(1,1.4);
\draw \foreach \x in {(1,0),(0,.35),(1,.7),(0,1.05),(1,1.4)} {
 		\x node[circle, draw, fill=white,
                        inner sep=0pt, minimum width=6pt] {}
 	};
\end{tikzpicture}
\caption{ }\label{fig:densecleanP5}
\end{subfigure}
\begin{subfigure}[b]{0.19\linewidth}
\centering
\begin{tikzpicture}[scale=.7]
\draw (1,1.4)--(0,1.75);
\draw \foreach \x in {(1,0),(1,.7),(1,1.4)} {
		(0,.7) -- \x
	};
\draw \foreach \x in {(0,.7),(1,0),(1,.7),(1,1.4),(0,1.75)} {
 		\x node[circle, draw, fill=white,
                        inner sep=0pt, minimum width=6pt] {}
 	};
\end{tikzpicture}
\caption{ }\label{fig:densecleanspec1}
\end{subfigure}
\begin{subfigure}[b]{0.19\linewidth}
\centering
\begin{tikzpicture}[scale=.7]
\draw \foreach \x in {(1,0),(1,.7),(1,1.4),(1,2.1)} {
		(0,1.05) -- \x
	};
\draw \foreach \x in {(1,0),(1,.7),(1,1.4),(1,2.1),(0,1.05)} {
 		\x node[circle, draw, fill=white,
                        inner sep=0pt, minimum width=6pt] {}
 	};
\end{tikzpicture}
\caption{ }\label{fig:densecleanK14}
\end{subfigure}
\begin{subfigure}[b]{0.19\linewidth}
\centering
\begin{tikzpicture}[scale=.7]
\draw (0,0)--(1,0)--(1,1)--(0,1)--(0,0);
\draw \foreach \x in {(0,0),(1,0),(1,1),(0,1)} {
 		\x node[circle, draw, fill=white,
                        inner sep=0pt, minimum width=6pt] {}
 	};
\end{tikzpicture}
\caption{ }\label{fig:densecleanC4}
\end{subfigure}
\begin{subfigure}[b]{0.19\linewidth}
\centering
\begin{tikzpicture}[scale=.7]
\draw (0,0)--(.7,.8)--(1.4,0)--(0,0) (.7,.8)--(.7,1.6);
\draw \foreach \x in {(0,0),(.7,.8),(1.4,0),(.7,1.6)} {
 		\x node[circle, draw, fill=white,
                        inner sep=0pt, minimum width=6pt] {}
 	};
\end{tikzpicture}
\caption{ }\label{fig:densecleanspec2}
\end{subfigure}
\caption{Nonisomorphic pieces of edge deletion patterns with which we can construct all nonisomorphic edge deletion patterns with up to 4 edges.}\label{fig:denseclean}
\end{figure}

\begin{lem}\label{lem:denseuniversal}
Let $G$ be a connected graph with a universal vertex $v$.
Then the vertex $v$ is not a basis forced vertex.
\end{lem}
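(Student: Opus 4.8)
The plan is to take an arbitrary metric basis $R$ of $G$ with $v\in R$ and to use the replacing technique to turn it into a metric basis that avoids $v$. The observation that makes everything work is that a universal vertex satisfies $d(v,x)=1$ for all $x\neq v$, so $v$ resolves a pair $\{x,y\}$ if and only if one of $x,y$ equals $v$. Consequently, any pair that is resolved by $v$ but by no element of $R\setminus\{v\}$ must be of the form $\{v,b\}$, where necessarily $b\notin R$ (otherwise $b$ itself would resolve $\{v,b\}$) and $b$ is adjacent to every element of $R\setminus\{v\}$ (so that no such element resolves $\{v,b\}$). Let $X$ denote the set of all vertices lying outside $R$ and adjacent to every element of $R\setminus\{v\}$.

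First I would note that $R\setminus\{v\}$ is not a resolving set, since $R$ is a minimum resolving set; hence at least one pair $\{v,b\}$ as above exists, so $X\neq\emptyset$. The only step that needs a genuine argument — the heart of the proof — is to show that $|X|=1$. Indeed, if $b,b'\in X$ were distinct, then $d(r,b)=1=d(r,b')$ for every $r\in R\setminus\{v\}$, and also $d(v,b)=1=d(v,b')$, so no vertex of $R$ would resolve the pair $\{b,b'\}$, contradicting that $R$ resolves $G$. Hence $X=\{b\}$ for a single vertex $b\notin R$.

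It then remains to check that $R[v\leftarrow b]=(R\setminus\{v\})\cup\{b\}$ is a resolving set of $G$; since $b\notin R$ it has cardinality $|R|=\dim(G)$, hence it is a metric basis not containing $v$. Any pair $\{x,y\}$ that was resolved in $R$ by some $r\neq v$ is still resolved by $r\in R[v\leftarrow b]$. Otherwise $\{x,y\}$ was resolved in $R$ only by $v$, so by the observation above together with the uniqueness just established it equals $\{v,b\}$, which is resolved by $b$ because $d(b,v)=1\neq 0=d(b,b)$. Therefore $v$ is not a basis forced vertex (the case $|V(G)|=1$ being trivial, as then no metric basis contains $v$).
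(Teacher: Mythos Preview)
Your proof is correct and follows essentially the same approach as the paper's: observe that since $v$ is universal it can only resolve pairs of the form $\{v,b\}$, show that the vertex $b$ with this property (adjacent to all of $R$) is unique because otherwise $R$ would fail to resolve $\{b,b'\}$, and then replace $v$ by $b$. Your write-up is slightly more explicit in isolating the set $X$ and checking $b\notin R$, but the argument is the same.
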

\begin{proof}
Let $R$ be a metric basis of $G$ such that $v \in R$.
Then the set $R \setminus \{v\}$ is not a resolving set.
There exist distinct $x,y \in V(G)$ such that $d(v,x) \neq d(v,y)$ and $d(r,x) = d(r,y)$ for all $r \in R \setminus \{v\}$.
Since $v$ is universal, we have either $x=v$ or $y=v$.
Suppose without loss of generality that $x=v$.
Now, $d(r,y) = 1$ for all $r\in R$, and the vertex $y$ is unique.
Consequently, the set $R[v \leftarrow y]$ is a resolving set.
Thus, the vertex $v$ is not a basis forced vertex.
\end{proof}

We like to describe the graphs we consider by giving the complement graphs, since the complement graph is often far simpler in structure.
The connected components of the complement graph also give us a natural way to partition the vertices.
Consider a graph $G$ such that the complement graph $\cg{G}$ has two connected components: $\cg{G}_1$ and $\cg{G}_2$.
Any two vertices $x \in V(\cg{G}_1)$ and $y \in V(\cg{G}_2)$ are adjacent in $G$.
Thus, $d_G(x,y) = 1$.
We cannot resolve any pair of vertices in $V(\cg{G}_1)$ with a vertex in $V(\cg{G}_2)$ in $G$.
In order to find the metric bases of $G$, we need to make sure that each connected component of $\cg{G}$ contains enough elements of a metric basis to resolve the pairs within that connected component.
In addition to that, we need to be able to resolve pairs of vertices that are in different connected components of $\cg{G}$.

\begin{lem}\label{lem:densecomplete}
Let $G$ be a connected graph.
Let $\cg{G}_i$ be a connected component of $\cg{G}$ such that $\cg{G}_i \simeq K_n$ for some $n \geq 2$.
Then no vertex $v \in V(\cg{G}_i)$ is a basis forced vertex of $G$.
\end{lem}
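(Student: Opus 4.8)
The plan is to observe that the vertices of $\cg{G}_i$ form a single false-twin class of $G$, use Lemma~\ref{lem:twin} to control how many of them must lie in any metric basis, and then show that the one vertex that is left out can be chosen arbitrarily. Concretely, write $T = V(\cg{G}_i)$; since $\cg{G}_i \simeq K_n$ is a connected component of $\cg{G}$, no edge of $\cg{G}$ leaves it, so in $G$ every $u \in T$ is adjacent to precisely the vertices of $V(G)\setminus T$ and to no other vertex of $T$. Hence any two vertices of $T$ are false twins in $G$, $|T| = n \ge 2$, and $V(G)\setminus T \neq \emptyset$ because $G$ is connected; in particular $d_G(u,u') = 2$ for distinct $u,u'\in T$, while $d_G(u,z) = 1$ for $u\in T$ and $z\notin T$. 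By Lemma~\ref{lem:twin}, every metric basis $R$ of $G$ satisfies $|R\cap T| \ge n-1$.

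The key step is to rule out $T \subseteq R$ for a metric basis $R$. Suppose it holds and fix $v\in T$; it suffices to prove that $R\setminus\{v\}$ still resolves $G$, contradicting the minimality of $R$. By the distances above, $d(v,\cdot)$ equals $0$ at $v$, $1$ on $V(G)\setminus T$, and $2$ on $T\setminus\{v\}$, so any pair $x\neq y$ that $v$ resolves satisfies $\{d(v,x),d(v,y)\}\in\{\{0,1\},\{0,2\},\{1,2\}\}$. If $\{d(v,x),d(v,y)\}=\{0,1\}$, then $x=v$ and $y\notin T$, and any $w\in T\setminus\{v\}$ (which lies in $R\setminus\{v\}$) resolves $x,y$ since $d(w,v)=2\neq1=d(w,y)$. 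In each of the other two cases exactly one endpoint, say $t$, lies in $T\setminus\{v\}$; then $t\in R\setminus\{v\}$ resolves the pair, because $d(t,t)=0$ is strictly smaller than the distance from $t$ to the other endpoint. Thus $R\setminus\{v\}$ resolves $G$, a contradiction, so $|R\cap T| = n-1$ for every metric basis $R$.

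Finally, fix an arbitrary $v\in T$ and any metric basis $R$. If $v\notin R$ we are done; otherwise $R$ omits a unique vertex $w\in T$ with $w\neq v$, and since $v$ and $w$ are false twins the transposition $(v\,w)$ is an automorphism of $G$, so its image is again a metric basis and that basis omits $v$ (this is exactly the freedom already noted in the remark following Lemma~\ref{lem:twin}). Hence every vertex of $V(\cg{G}_i)$ is avoided by some metric basis, and therefore none of them is a basis forced vertex.

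I expect the only genuine obstacle to be the middle step, namely ruling out that a metric basis contains the whole twin class $T$; this is where the rigid distance structure of $T$ — internal distances all equal to $2$ and distances to the outside all equal to $1$ — is essential, and it is also what makes the short case analysis go through.
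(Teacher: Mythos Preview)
Your proof is correct and follows essentially the same approach as the paper's: recognise $T$ as a false-twin class, apply Lemma~\ref{lem:twin}, rule out $T\subseteq R$ by showing $R\setminus\{v\}$ still resolves via a short case analysis, and then swap twins to avoid any given $v$. The only cosmetic difference is that you organise the case analysis by the value of $\{d(v,x),d(v,y)\}$ whereas the paper organises it by the location of $x$ and $y$; the content is identical.
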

\begin{proof}
One example of the graph $G$ is illustrated in Figure \ref{fig:denseK4}.

For simplicity, let $i=1$ and let us denote $V(\cg{G}_1) = \{v_1, \ldots , v_n\}$.
Since $\cg{G}_1 \simeq K_n$, we have $v_i \not\sim_G v_j$ for all $i \neq j$ and $v_i \sim_G u$ for all $i$ and $u \in V(G) \setminus V(\cg{G}_i)$.
Thus, $N_G(v_i) = N_G(v_j)$ for all $i \neq j$ and the vertices $v_i$ are false twins in $G$.
According to Lemma \ref{lem:twin}, every resolving set of $G$ contains at least $n-1$ vertices $v_i$.

If there exists a metric basis $R$ such that $v_1 \notin R$, then none of the vertices $v_i$ are basis forced vertices.
Indeed, then the set $R[v_1 \leftarrow v_i]$ is also a metric basis of $G$ as the vertices $v_i$ are twins with each other.

Suppose then that $R$ is a metric basis of $G$ such that $v_i \in R$ for all $i = 1, \ldots , n$. 
We will show that such metric basis cannot in fact exist by proving that the set $R \setminus \{v_i\}$ is a resolving set of $G$ for any $i = 1, \ldots ,n$. 
Let us consider a fixed $v_i$. 
In what follows, the distances we consider are distances in $G$, and we omit the subscript $G$ from the notation of distance. 
Since $d(v_i,u) = 1$ if and only if $u \notin V(\cg{G}_1)$, the vertex $v_i$ can only resolve pairs where one or both vertices are some $v_j$, $j \in \{1, \ldots , n\}$. 
However, all of these pairs can be resolved with other elements of $R$:
\begin{itemize}
\item Consider the vertices $v_j$ and $v_k$ where $j \neq k$.
Suppose without loss of generality that $v_j \neq v_i$.
Now, $v_j \in R \setminus \{v_i\}$ and $d(v_j,v_j) \neq d(v_j,v_k)$.

\item Consider the vertex $v_i$ and some $u \notin V(\cg{G}_1)$.
Let $v_j$ be such that $j \neq i$.
Then $v_j \in R \setminus \{v_i\}$ and $d(v_j,v_i) = 2 \neq 1 = d(v_j,u)$.

\item Consider some vertex $v_j$, where $j \neq i$, and some $u \notin V(\cg{G}_1)$.
Now, $v_j \in R \setminus \{v_i\}$ and $d(v_j,v_j) \neq d(v_j,u)$.
\end{itemize}
In conclusion, the set $R \setminus \{v_i\}$ is a resolving set for any $v_i$.
This contradicts the assumption that the set $R$ is a metric basis of $G$.
Consequently, for every $v_i \in V(\cg{G}_1)$ there exists a metric basis $R$ such that $v_i \notin R$, and thus the vertex $v_i$ is not a basis forced vertex.
\end{proof}

\begin{figure}
\centering
\begin{subfigure}[b]{0.45\linewidth}
\centering
\begin{tikzpicture}[scale=1.1]
	\coordinate (k1) at (0,-1);
	\coordinate (k2) at (0,1);
	\coordinate (k3) at (.3,.2);
	\coordinate (k4) at (-.3,-.2);
	\coordinate (r1) at (2,-.3);
	\coordinate (r2) at (2,.3);
	\coordinate (r3) at (2,-.9);
	\draw (r1) -- (r3);
	\foreach \x in {1,...,4} \foreach \y in {1,...,4} {
		\draw[dotted, thick] (k\x) -- (k\y);
	};
	\foreach \x in {1,...,4} \foreach \y in {1,...,3} {
		\draw (k\x) -- (r\y);
	};
	\draw[dashed] (2,-.3) ellipse (.6cm and 1cm);
	\draw \foreach \x in {(k1),(k2),(k3),(k4),(r1),(r2),(r3)}{
 		\x node[circle, draw, fill=white,
                        inner sep=0pt, minimum width=6pt] {}
 	};
 \end{tikzpicture}
\caption{$\cg{G}_i \simeq K_4$}\label{fig:denseK4}
\end{subfigure}
\begin{subfigure}[b]{0.45\linewidth}
\centering
\begin{tikzpicture}[scale=1.1]
	\coordinate (v0) at (0,1);
	\coordinate (v1) at (0,.1);
	\coordinate (v2) at (.5,-.5);
	\coordinate (v3) at (-.4,-.8);
	\coordinate (r1) at (1.5,-1);
	\coordinate (r2) at (1.7,-.5);
	\foreach \x in {1,...,3} {
		\draw[dotted, thick] (v0) -- (v\x);
	};
	\draw (v1) -- (v2) -- (v3) -- (v1);
	\foreach \x in {0,...,3} \foreach \y in {1,...,2} {
		\draw (v\x) -- (r\y);
	};
	\draw[dashed] (1.6,-.75) ellipse (.5cm and .7cm);
	\draw \foreach \x in {(v0),(v1),(v2),(v3),(r1),(r2)}{
 		\x node[circle, draw, fill=white,
                        inner sep=0pt, minimum width=6pt] {}
 	};
 \end{tikzpicture}
\caption{$\cg{G}_i \simeq K_{1,3}$}\label{fig:denseK13}
\end{subfigure}
\caption{Example illustrations of the graphs considered in Lemmas \ref{lem:densecomplete} and \ref{lem:densestar}. The graphs may have any (nonempty) structure inside the dashed area.}
\end{figure}
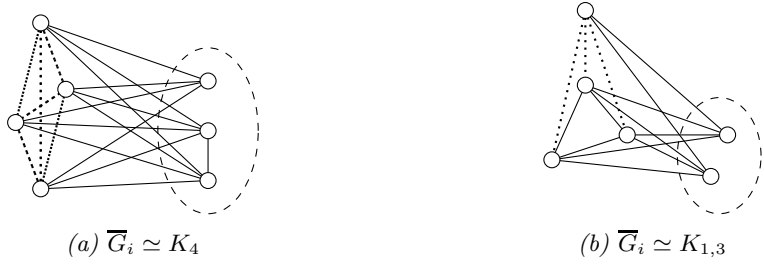

\begin{lem}\label{lem:densestar}
Let $G$ be a connected graph.
Let $\cg{G}_i$ be a connected component of $\cg{G}$ such that $\cg{G}_i \simeq K_{1,n}$ for some $n \geq 2$.
Then no vertex $v \in V(\cg{G}_i)$ is a basis forced vertex of $G$.
\end{lem}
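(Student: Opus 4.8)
The plan is to argue as in Lemma~\ref{lem:densecomplete}, exploiting the twin structure that a star component of $\cg{G}$ imposes on $G$. Write $\cg{G}_i\simeq K_{1,n}$ with center $c$ and leaves $v_1,\dots,v_n$, and set $U=V(G)\setminus V(\cg{G}_i)$; since $G$ is connected and $c$ is nonadjacent in $G$ to every $v_j$, we have $U\neq\emptyset$. First I would record the relevant distances in $G$: each $v_j$ is adjacent in $G$ to every vertex other than $c$, so the leaves are pairwise true twins, $d_G(v_j,v_k)=1$, $d_G(c,v_j)=2$, and $d_G(c,u)=d_G(v_j,u)=1$ for every $u\in U$. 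Hence $d_G(c,\cdot)$ takes the value $0$ at $c$, $1$ on $U$, and $2$ on the leaves, while $d_G(v_j,\cdot)$ takes the value $0$ at $v_j$, $2$ at $c$, and $1$ elsewhere. In particular $c$ resolves only the pairs meeting $\{c\}$ together with the pairs $\{u,v_k\}$ with $u\in U$, and $v_j$ resolves only the pairs meeting $\{v_j,c\}$. By Lemma~\ref{lem:twin}, every resolving set of $G$ omits at most one leaf.

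The crucial step is to show that \emph{no metric basis of $G$ contains all of $c,v_1,\dots,v_n$}. Indeed, if $R\supseteq\{c,v_1,\dots,v_n\}$ then $R\setminus\{v_1\}$ still resolves $G$: any pair that $v_1$ resolves meets $\{v_1,c\}$, and such a pair is resolved by $c$ if it meets $\{c\}$, by $v_k$ if it is $\{v_1,v_k\}$, and again by $c$ if it is $\{v_1,u\}$ with $u\in U$ (since $d(c,v_1)=2\neq1=d(c,u)$); as $c,v_2,\dots,v_n\in R\setminus\{v_1\}$, this contradicts minimality. From this I would deduce that no leaf is basis forced: if every metric basis contained all leaves, then for any metric basis $R$ the crucial step would give $c\notin R$, whence $R[v_1\leftarrow c]$ would also be a metric basis — among the pairs $v_1$ resolves, only those $\{v_1,u\}$ with $u\in U$ can fail to be resolved within $R\setminus\{v_1\}$, and the added vertex $c$ resolves all of them — contradicting the assumption. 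So some metric basis omits a leaf, and by the interchangeability of true twins (together with Lemma~\ref{lem:twin}) for each $j$ there is a metric basis omitting $v_j$.

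For the center $c$: if no metric basis contains $c$ there is nothing to prove, so let $R$ be a metric basis with $c\in R$. By the crucial step and Lemma~\ref{lem:twin}, $R$ omits exactly one leaf $v_m$. I would then verify that $R[c\leftarrow v_m]$ is a metric basis: the pairs $c$ resolves are the pairs meeting $\{c\}$ — each resolved by any one of the leaves still lying in $R$ — and the pairs $\{u,v_k\}$ with $u\in U$, which for $k\neq m$ are resolved by $v_k\in R$ and for $k=m$ by the newly added $v_m$ (since $d(v_m,u)=1\neq0$). Thus $c$ lies outside a metric basis of $G$, so $c$ is not basis forced either.

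The only genuinely delicate point is the crucial step, i.e.\ ruling out a metric basis that contains the whole component $V(\cg{G}_i)$; once that is in place, everything else is routine twin bookkeeping together with the short distance computations recorded above, exactly in the spirit of the proof of Lemma~\ref{lem:densecomplete}.
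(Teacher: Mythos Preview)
Your proposal is correct and follows essentially the same approach as the paper: both identify the leaves as true twins, both perform the replacement $v_j\leftarrow c$ to show leaves are not basis forced, and both perform the replacement $c\leftarrow v_m$ (with $v_m$ the missing leaf) to show the center is not basis forced. The only organizational difference is that you isolate the ``crucial step'' (no metric basis contains all of $V(\cg{G}_i)$) as a separate claim, whereas the paper obtains it implicitly as the special case $v_0\in R$ of its $R[v_i\leftarrow v_0]$ argument; the substance is the same.
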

\begin{proof}
One example of the graph $G$ is illustrated in Figure \ref{fig:denseK13}.

Let $\cg{G}_1 \simeq K_{1,n}$ for some $n \geq 2$, and denote $V(\cg{G}_1) = \{v_0, \ldots ,v_n\}$, where $\deg_{\cg{G}} (v_0) = n$ and $\deg_{\cg{G}} (v_i) = 1$ for all $i \neq 0$.
Since $G$ is connected, $V(G) \setminus V(\cg{G}_1) \neq \emptyset$.
The vertices $v_i$ where $i \neq 0$ are true twins in $G$.
According to Lemma \ref{lem:twin} any resolving set of $G$ contains at least $n-1$ vertices $v_i$, $i \neq 0$.

We will first show that no $v_i$, where $i\neq 0$, is a basis forced vertex of $G$.
Suppose that $R$ is a resolving set of $G$ such that $v_i \in R$ for all $i \neq 0$.
Consider a fixed $v_i$.
Let us show that the set $R[v_i \leftarrow v_0]$ is a resolving set of $G$.
In what follows, all distances that we consider are distances in $G$.
Consider the pairs of vertices that $v_i$ resolves.
Let $x,y \in V(G)$ be such that $d(v_i,x) \neq d(v_i,y)$.
We have $d(v_i,u) = 1$ if and only if $u \neq v_i$ and $u \neq v_0$.
Consequently, $x \in \{v_i,v_0\}$ or $y \in \{v_i,v_0\}$.
Suppose that $x = v_0$.
Then $v_0$ resolves $x$ and $y$ no matter what $y$ is.
Suppose then that $x = v_i$.
If $d(v_0,y) \neq d(v_0,v_i)$, then we are done.
Suppose that $d(v_0,y) = d(v_0,v_i) = 2$.
Since $d(v_0,u) = 2$ if and only if $u=v_j$ for some $j = 1, \ldots , n$, we have $y = v_j$ for some $j = 1, \ldots, n$.
However, we also have that $y \neq v_i$, and thus $y \in R[v_i \leftarrow v_0]$ and $y$ itself resolves $v_i$ and $y$.
Thus, $R[v_i \leftarrow v_0]$ is a resolving set of $G$ and $v_i$ is not a basis forced vertex of $G$.

Let us then show that $v_0$ is not a basis forced vertex.
Let $R$ be a metric basis of $G$ such that $v_0 \in R$.
Due to what we have already shown above, we may assume that $v_i \notin R$ for some $i \in \{1, \ldots , n\}$ and $v_j \in R$ for all $j \neq i$.
Let us show that the set $R [v_0 \leftarrow v_i]$ is a metric basis of $G$.

Since $R$ is a metric basis, the set $R \setminus \{v_0\}$ is not a resolving set.
Suppose that the pair $x,y \in V(G)$ is not resolved by any element of $R \setminus \{v_0\}$.
We will show that then either $x=v_i$ or $y=v_i$.
It is clear that $x,y \notin R \setminus \{v_0\}$.
If $x,y \notin V(\cg{G}_1)$, then $d(v_0,x) = d(v_0,y) = 1$ and the pair $x,y$ is not resolved by any element of $R$, which contradicts the assumption that the set $R$ is a metric basis of $G$.
For any $j \neq i$ we have $d(v_j,v_0) = 2$ and $d(v_j,u) = 1$ for all $u \in V(G) \setminus \{v_0\}$.
If $x = v_0$ or $y = v_0$, then $v_j$ (where $j \neq i$) resolves $x$ and $y$, which contradicts the assumption that no element of $R \setminus \{v_0\}$ resolves $x$ and $y$, since we have $v_j \in R$ for all $j \neq i$.
%Since $v_j \in R$ for all $j \neq i$ and no element of $R \setminus \{v_0\}$ resolves $x$ and $y$, we have $x \neq v_0$ and $y \neq v_0$.
Thus, we have $x \neq v_0$ and $y \neq v_0$, and the only option we now have left is that either $x = v_i$ or $y = v_i$.
Consequently, the set $R[v_0 \leftarrow v_i]$ is a metric basis of $G$ and $v_0$ is not a basis forced vertex of $G$.
\end{proof}

\begin{lem}\label{lem:densepath4}
Let $G$ be a connected graph with at least 5 vertices.
Let $\cg{G}_i$ be a connected component of $\cg{G}$ such that $\cg{G}_i \simeq P_4$.
Then the vertices $v \in V(\cg{G}_i)$ are not basis forced vertices of $G$.
\end{lem}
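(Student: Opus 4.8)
The plan is to exploit the rigidity of distances inside the $P_4$-component. Label the vertices of $\cg{G}_i$ so that $v_1v_2v_3v_4$ is a path in $\cg{G}$; then in $G$ the only non-edges among $\{v_1,v_2,v_3,v_4\}$ are $v_1v_2$, $v_2v_3$, $v_3v_4$, while $v_1v_3,v_1v_4,v_2v_4\in E(G)$, and every vertex $u\in V(G)\setminus\{v_1,v_2,v_3,v_4\}$ is $G$-adjacent to all four of them, so $d(v_j,u)=1$. A short computation shows that inside the component the pairs at $G$-distance $2$ are exactly $\{v_1,v_2\},\{v_2,v_3\},\{v_3,v_4\}$ and all remaining pairs are at distance $1$. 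Two elementary observations drive the proof: (a) since every vertex outside the component is at distance $1$ from each $v_j$, no outside vertex resolves any pair contained in $\{v_1,v_2,v_3,v_4\}$; and (b) for $x\in\{v_1,v_2,v_3,v_4\}$ and $u$ outside, $v_j$ resolves the pair $\{x,u\}$ if and only if $x\in B_j:=N_{\cg{G}_i}[v_j]$, where $B_1=\{v_1,v_2\}$, $B_2=\{v_1,v_2,v_3\}$, $B_3=\{v_2,v_3,v_4\}$, $B_4=\{v_3,v_4\}$. Finally, let $p_j$ be the unique pair inside the component \emph{not} resolved by $v_j$, namely $p_1=\{v_3,v_4\}$, $p_2=\{v_1,v_3\}$, $p_3=\{v_2,v_4\}$, $p_4=\{v_1,v_2\}$; in particular any two of the four vertices together resolve all inside pairs, whereas no single one of them does.

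First I would show that every metric basis $R$ of $G$ satisfies $|R\cap\{v_1,v_2,v_3,v_4\}|=2$. The inequality $\ge 2$ follows from (a) together with the fact that no single $v_j$ resolves all inside pairs. For $\le 2$, suppose $|R\cap\{v_1,v_2,v_3,v_4\}|\ge 3$; then $R$ contains $\{v_1,v_2\}$ or $\{v_3,v_4\}$, since every $3$-element subset of $\{v_1,v_2,v_3,v_4\}$ contains one of these two pairs. In the first case, using $B_1\subseteq B_2$ one checks via (b) that $v_2$ resolves every outside pair that $v_1$ could resolve, so $v_1$ resolves no pair uniquely within $R$, while $R\setminus\{v_1\}$ still contains two of the four vertices and hence resolves every inside pair; thus $R\setminus\{v_1\}$ is a resolving set, contradicting the minimality of $R$. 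The second case is symmetric, with $v_4$ in place of $v_1$ and $B_4\subseteq B_3$. Hence $|R\cap\{v_1,v_2,v_3,v_4\}|=2$.

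Now fix $v=v_j$ and a metric basis $R$ with $v\in R$, and write $R\cap\{v_1,v_2,v_3,v_4\}=\{v,v'\}$. By the replacing technique introduced in Section~\ref{sec:Cut-vertices}, it suffices to exhibit $w\in\{v_1,v_2,v_3,v_4\}\setminus\{v,v'\}$ (so $w\notin R$) that resolves every pair which $v$ resolves uniquely within $R$; then $R[v\leftarrow w]$ is a metric basis not containing $v$. Using (a) and (b): the only inside pair uniquely resolved by $v$ in $R$ is $p_{v'}$ (indeed $v$ resolves every inside pair except $p_v$, and among the other members of $R$ only $v'$ can resolve inside pairs at all, and it fails only on $p_{v'}$); and every outside pair uniquely resolved by $v$ in $R$ has the form $\{x,u\}$ with $x\in B_j\setminus B_{v'}$ (since $v'$ resolves $\{x,u\}$ exactly when $x\in B_{v'}$, and the case $x=v$ is included here because $v\in B_j\setminus B_{v'}$ whenever $v\not\sim_G v'$). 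Since $w$ resolves $\{x,u\}$ precisely when $x\in B_w$, it is enough to find $w$ with $B_j\setminus B_{v'}\subseteq B_w$ and such that $w$ resolves $p_{v'}$. By the automorphism of $G$ exchanging $v_1\leftrightarrow v_4$ and $v_2\leftrightarrow v_3$, I may assume $v\in\{v_1,v_2\}$, which leaves six cases for $v'$; in each of them $B_j\setminus B_{v'}$ has at most two elements and a direct check produces a suitable $w$ (for example, $w=v_2$ works whenever $v=v_1$ and $v'\in\{v_3,v_4\}$, and $w=v_1$ works whenever $v=v_2$ and $v'\in\{v_3,v_4\}$; when $v'$ is the neighbour of $v$ in $\cg{G}_i$ one has $B_j\setminus B_{v'}=\emptyset$ and any remaining vertex works). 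Thus $v$ is not a basis forced vertex, and since $j$ was arbitrary the lemma follows.

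The step I expect to be the main obstacle is the final case analysis, in particular for the two interior vertices $v_2,v_3$ of the path $\cg{G}_i$: for these $B_j$ is not contained in any other $B_w$, so one cannot simply find a single $w$ that handles every outside pair $v$ could possibly resolve, and the argument genuinely needs the reduction to $B_j\setminus B_{v'}$, i.e. the fact that the second basis vertex $v'$ already takes care of the outside pairs $\{x,u\}$ with $x\in B_j\cap B_{v'}$. One must also keep in mind that the pair $\{v,u\}$ itself is among the pairs that may need recovering; this is automatic because $v\in B_j$, so $\{v,u\}$ is covered as soon as $v\notin B_{v'}$, i.e. as soon as $v\not\sim_G v'$.
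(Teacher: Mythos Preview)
Your argument is correct (with one small slip: in the final paragraph you write ``$v\notin B_{v'}$, i.e.\ as soon as $v\not\sim_G v'$'', but since $B_{v'}=N_{\cg{G}_i}[v']$, the condition $v\notin B_{v'}$ is equivalent to $v\sim_G v'$, not $v\not\sim_G v'$; this does not affect the logic, since your case analysis handles all pairs $(v,v')$ anyway).

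The paper takes a shorter route. Instead of labelling along the path in $\cg{G}$, it labels $v_1v_2v_3v_4$ as the path in $G$ (recall $P_4$ is self-complementary) and then verifies directly that each of the two disjoint pairs $\{v_1,v_2\}$ and $\{v_3,v_4\}$ already resolves every pair of vertices with at least one member in $V(\cg{G}_i)$. Since no vertex of $V(\cg{G}_i)$ can resolve a pair lying entirely outside $V(\cg{G}_i)$, one may replace $R\cap V(\cg{G}_i)$ wholesale by either $\{v_1,v_2\}$ or $\{v_3,v_4\}$ in any metric basis $R$; as these two pairs partition $V(\cg{G}_i)$, no $v_j$ is forced. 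In your labelling these two ``universal'' pairs are $\{v_1,v_3\}$ and $\{v_2,v_4\}$, and indeed $B_1\cup B_3=B_2\cup B_4=\{v_1,v_2,v_3,v_4\}$, which is exactly the property making them work. So the paper swaps the whole two-element intersection at once, whereas you swap one vertex at a time via the $R[v\leftarrow w]$ technique, which forces you into the six-case analysis. Your approach has the virtue of being a systematic application of the replacement machinery developed earlier in the paper, while the paper's direct identification of two disjoint good pairs avoids all casework.
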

\begin{proof}
One example of the graph $G$ is illustrated in Figure \ref{fig:denseP4}.

Since the path $P_4$ is self-complementary, we have $G[V(\cg{G}_i)] \simeq P_4$.
Let us denote the elements of $V(\cg{G}_i)$ by $v_i$ so that $v_1 v_2 v_3 v_4$ is a path in $G$.
Since $G$ has at least 5 vertices, $V(G) \setminus V(\cg{G}_i) \neq \emptyset$.
Consequently, $d(v_1,v_4) = 2$.

In order to resolve $v_i$ and $v_j$, where $i \neq j$, we need at least two elements of $V(\cg{G}_i)$ in any resolving set.
Indeed, we have $d(u,v_i) = 1 = d(u,v_j)$ for all $u \in V(G) \setminus V(\cg{G}_i)$, so the only vertices that possibly resolve $v_i$ and $v_j$ are the elements of $V(\cg{G}_i)$.
However, for any element $v \in V(\cg{G}_i)$ there are at least two vertices $v_i$ at the same distance from it.
For example, we have $d(v_1,v_3) = 2 = d(v_1,v_4)$ and $d(v_2,v_1) = 1 = d(v_2,v_3)$.
Thus, any resolving set of $G$ contains at least two elements of $V(\cg{G}_i)$.

Let us then show that two elements of $V(\cg{G}_i)$ are enough.
The elements of $V(\cg{G}_i)$ can only resolve pairs of vertices where one or both vertices are in $V(\cg{G}_i)$.
Let us show that all such pairs are resolved by $v_1$ and $v_2$.
Consider distinct $x,y \in V(G)$.
Suppose that $x,y \in V(\cg{G}_i)$.
If $x=v_1$ or $x=v_2$, then $x$ and $y$ are resolved by $x$ itself (similarly for $y$).
Let $x=v_3$ and $y=v_4$.
Now, $d(v_2,v_3) = 1 \neq 2 = d(v_2,v_4)$, and thus $x$ and $y$ are resolved.
Suppose then that $x \in V(\cg{G}_i)$ and $y \in V(G) \setminus V(\cg{G}_i)$.
Now, $d(v_1,y) = 1 = d(v_2,y)$.
However, $d(v_1,x) \neq 1$ when $x \neq v_2$ and $d(v_2,x) \neq 1$ when $x=v_2$.
In conclusion, it is enough for a resolving set to contain two elements of $V(\cg{G}_i)$; $v_1$ and $v_2$, or $v_3$ and $v_4$ by symmetry.
\end{proof}

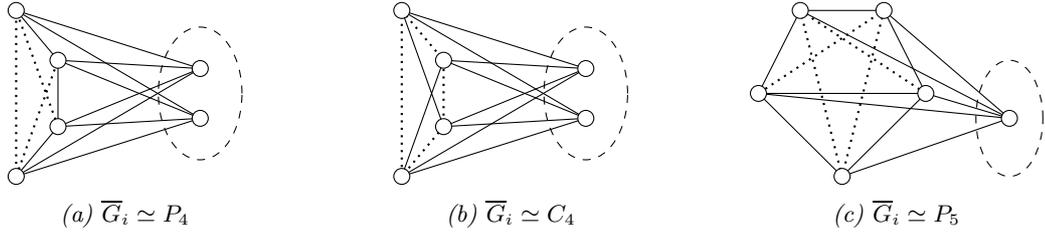
\begin{figure}
\centering
\begin{subfigure}[b]{0.32\linewidth}
\centering
\begin{tikzpicture}[scale=1.1]
	\coordinate (p1) at (.3,-.4);%v2
	\coordinate (p2) at (-.2,1);%v4
	\coordinate (p3) at (-.2,-1);%v1
	\coordinate (p4) at (.3,.4);%v3
	\coordinate (r1) at (2,-.3);
	\coordinate (r2) at (2,.3);
	\draw[dotted, thick] (p1) -- (p2) -- (p3) -- (p4);
	\draw (p1) -- (p3)
		  (p1) -- (p4)
		  (p2) -- (p4);
	\foreach \x in {1,...,4} \foreach \y in {1,...,2} {
		\draw (p\x) -- (r\y);
	};
	\draw[dashed] (2,0) ellipse (.5cm and .8cm);
	\draw \foreach \x in {(p1),(p2),(p3),(p4),(r1),(r2)}{
 		\x node[circle, draw, fill=white,
                        inner sep=0pt, minimum width=6pt] {}
 	};
 \end{tikzpicture}
\caption{$\cg{G}_i \simeq P_4$}\label{fig:denseP4}
\end{subfigure}
\hfill
\begin{subfigure}[b]{0.32\linewidth}
\centering
\begin{tikzpicture}[scale=1.1]
	\coordinate (p1) at (.3,-.4);
	\coordinate (p2) at (-.2,1);
	\coordinate (p3) at (-.2,-1);
	\coordinate (p4) at (.3,.4);
	\coordinate (r1) at (2,-.3);
	\coordinate (r2) at (2,.3);
	\draw[dotted, thick] (p1) -- (p3) -- (p2) -- (p4) -- (p1);
	\draw (p1) -- (p2) (p3) -- (p4);
	\foreach \x in {1,...,4} \foreach \y in {1,...,2} {
		\draw (p\x) -- (r\y);
	};
	\draw[dashed] (2,0) ellipse (.5cm and .8cm);
	\draw \foreach \x in {(p1),(p2),(p3),(p4),(r1),(r2)}{
 		\x node[circle, draw, fill=white,
                        inner sep=0pt, minimum width=6pt] {}
 	};
 \end{tikzpicture}
\caption{$\cg{G}_i \simeq C_4$}\label{fig:denseC4}
\end{subfigure}
\hfill
\begin{subfigure}[b]{0.32\linewidth}
\centering
\begin{tikzpicture}[scale=1.1]
	\coordinate (p1) at (1,0);
	\coordinate (p2) at (-.5,1);
	\coordinate (p3) at (0,-1);
	\coordinate (p4) at (.5,1);
	\coordinate (p5) at (-1,0);
	\coordinate (r1) at (2,-.3);
	\draw[dotted, thick] (p1) -- (p2) -- (p3) -- (p4) -- (p5);
	\draw (p1) -- (p3) -- (p5) -- (p1) -- (p4) -- (p2) -- (p5);
	\foreach \x in {1,...,5} {
		\draw (p\x) -- (r1);
	};
	\draw[dashed] (r1) ellipse (.4cm and .7cm);
	\draw \foreach \x in {(p1),(p2),(p3),(p4),(p5),(r1)}{
 		\x node[circle, draw, fill=white,
                        inner sep=0pt, minimum width=6pt] {}
 	};
 \end{tikzpicture}
\caption{$\cg{G}_i \simeq P_5$}\label{fig:denseP5}
\end{subfigure}
\caption{Example illustrations of the graphs considered in Lemmas \ref{lem:densepath4}, \ref{lem:densecycle4} and \ref{lem:denseP5}. The graphs may have any (nonempty) structure inside the dashed area.}
\end{figure}

\begin{lem}\label{lem:densecycle4}
Let $G$ be a connected graph with at least 5 vertices.
Let $\cg{G}_i$ be a connected component of $\cg{G}$ such that $\cg{G}_i \simeq C_4$.
Then the vertices $v \in V(\cg{G}_i)$ are not basis forced vertices of $G$.
\end{lem}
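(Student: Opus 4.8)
The plan is to follow the pattern of the proofs of Lemmas \ref{lem:densecomplete} and \ref{lem:densestar}, exploiting that the four vertices of $V(\cg{G}_i)$ split into two pairs of true twins. First I would fix notation: since $\cg{G}_i \simeq C_4$, write $V(\cg{G}_i) = \{v_1,v_2,v_3,v_4\}$ so that the edges of $\cg{G}_i$ (equivalently, the non-edges of $G$ inside this component) are $v_1v_2,v_2v_3,v_3v_4,v_4v_1$; thus $G[V(\cg{G}_i)]$ has exactly the edges $v_1v_3$ and $v_2v_4$ (a copy of $2K_2$). Because $\cg{G}_i$ is a connected component of $\cg{G}$, every vertex outside $V(\cg{G}_i)$ is adjacent in $G$ to all of $v_1,\dots,v_4$; as $G$ has at least $5$ vertices such a vertex exists, so $d_G(v_k,v_\ell)=2$ for each non-adjacent pair among the $v_k$, while $d_G(v_1,v_3)=d_G(v_2,v_4)=1$. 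This pins down the distance profile of each $v_k$; e.g.\ $d(v_1,\cdot)$ equals $0$ at $v_1$, equals $1$ at $v_3$ and at every vertex outside $V(\cg{G}_i)$, and equals $2$ at $v_2$ and $v_4$.

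Next I would record that $N_G[v_1]=N_G[v_3]=\{v_1,v_3\}\cup(V(G)\setminus V(\cg{G}_i))$ and $N_G[v_2]=N_G[v_4]=\{v_2,v_4\}\cup(V(G)\setminus V(\cg{G}_i))$, so $\{v_1,v_3\}$ and $\{v_2,v_4\}$ are pairs of true twins; by Lemma \ref{lem:twin} every resolving set contains at least one vertex from each pair. The crux is the claim that \emph{no} metric basis contains both $v_1$ and $v_3$. To prove it, assume $R$ is a metric basis with $v_1,v_3\in R$ and show $R\setminus\{v_1\}$ is still resolving, contradicting minimality. From the distance profile, the only pairs $v_1$ can resolve are the pairs $\{v_1,z\}$ and the pairs with one endpoint in $\{v_2,v_4\}$ and the other in $\{v_3\}\cup(V(G)\setminus V(\cg{G}_i))$. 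For each such pair I would exhibit a resolver in $R\setminus\{v_1\}$: every pair with an endpoint in $\{v_2,v_4\}$, together with $\{v_1,v_2\}$, $\{v_1,v_3\}$, $\{v_1,v_4\}$, is resolved by $v_3$ (which is at distance $2$ from both $v_2$ and $v_4$, at distance $1$ from every outside vertex, and at distance $0$ from itself and $1$ from $v_1$); and a pair $\{v_1,z\}$ with $z\notin V(\cg{G}_i)$ is resolved by whichever of $v_2,v_4$ lies in $R$ (one of them does, by Lemma \ref{lem:twin}), since that vertex is at distance $2$ from $v_1$ but $1$ from $z$. This proves the claim.

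Finally, combining the claim with Lemma \ref{lem:twin}, every metric basis of $G$ contains \emph{exactly} one of $\{v_1,v_3\}$ and exactly one of $\{v_2,v_4\}$. If a metric basis $R$ contains $v_1$, then $v_3\notin R$, and since $v_1,v_3$ are twins, $R[v_1\leftarrow v_3]$ is again a metric basis, and it avoids $v_1$; hence $v_1$ is not a basis forced vertex. The same argument, with the labels permuted (the map interchanging $v_1\leftrightarrow v_2$ and $v_3\leftrightarrow v_4$ while fixing every other vertex is an automorphism of $G$, and the twin swaps are trivial symmetries), shows $v_2,v_3,v_4$ are not basis forced either. The step I expect to require the most care is the exhaustive verification that $R\setminus\{v_1\}$ resolves every pair that $v_1$ resolved; here it is essential to remember that the hypothesis $|V(G)|\ge 5$ is precisely what forces $d(v_1,v_2)=d(v_1,v_4)=2$ and thereby makes the bookkeeping go through.
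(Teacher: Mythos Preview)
Your proof is correct and rests on the same ingredients as the paper's: the twin structure $\{v_1,v_3\}$, $\{v_2,v_4\}$, the distance profile of each $v_k$ inside and outside $V(\cg{G}_i)$, and Lemma~\ref{lem:twin}. The organization differs slightly. The paper argues directly that the pair $\{v_1,v_2\}$ (and by symmetry $\{v_3,v_4\}$) already resolves every pair with a vertex in $V(\cg{G}_i)$, so any metric basis can be modified to meet $V(\cg{G}_i)$ in either of these two disjoint sets, and hence no $v_k$ is forced. You instead follow the template of Lemma~\ref{lem:densecomplete}: you show that if a metric basis contained both members of a twin pair, one would be redundant, so every metric basis hits each twin pair in exactly one vertex, and then the twin swap $R[v_1\leftarrow v_3]$ finishes. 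Both arguments are short and use the same distance computations; the paper's version is marginally more direct in exhibiting the two disjoint bases, while yours makes the ``exactly one from each twin pair'' structure of every metric basis explicit.
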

\begin{proof}
Let us denote the vertices of $\cg{G}_i$ by $v_1, \ldots , v_4$ so that $v_1 v_2 v_3 v_4 v_1$ is a cycle in $\cg{G}$.
Then $v_1$ and $v_3$ are adjacent in $G$, and $v_2$ and $v_4$ are adjacent in $G$ (see Figure \ref{fig:denseC4}).
The vertices $v_1$ and $v_3$ are true twins, and so are $v_2$ and $v_4$.
According to Lemma \ref{lem:twin} any resolving set contains $v_1$ or $v_3$ (and $v_2$ or $v_4$). 

Let us show that a metric basis of $G$ contains at most two vertices of $V(\cg{G}_i)$.
No element of $V(\cg{G}_i)$ can resolve $x$ and $y$ if both $x \notin V(\cg{G}_i)$ and $y \notin V(\cg{G}_i)$.
Thus, it is sufficient to consider distinct $x,y \in V(G)$ such that $x \in V(\cg{G}_i)$.
Let us show that we can resolve any such pair $x,y$ with $v_1$ and $v_2$.
Clearly, if $x \in \{v_1,v_2\}$ or $y \in \{v_1,v_2\}$, then $x$ and $y$ are resolved.
Suppose then that $x = v_3$.
Now, $d(v_2,x) = 2$.
The only vertices that are at distance 2 from $v_2$ are $v_1$ and $v_3$.
Thus, if we had $d(v_2,x) = d(v_2,y)$, then $y = v_1$ and $d(v_1,y) \neq d(v_1,x)$.
Similarly, if $x=v_4$ and $d(v_1,x) = 2 = d(v_1,y)$, then $y = v_2$ and $d(v_2,y) \neq d(v_2,x)$.
In conclusion, there exists a metric basis $R$ of $G$ such that $R \cap V(\cg{G}_i) = \{v_1,v_2\}$.
By symmetry, there also exists a metric basis $R$ of $G$ such that $R \cap V(\cg{G}_i) = \{v_3,v_4\}$.
Thus, none of the elements of $V(\cg{G}_i)$ are basis forced vertices.
\end{proof}

The following lemma is used in this section for $k=2$.
We will, however, use the following lemma for other values of $k$ later in Section \ref{sec:Colour}.

However, this more general form will be used later in Section \ref{sec:Colour}.

\begin{lem}\label{lem:denseP5}
Let $k$ be an even positive integer.
Let $G$ be a connected graph such that $\cg{G} \simeq P_5 \cup \cdots \cup P_5 \cup \cg{K}_m$ where $P_5$ appears $\frac{k}{2}$ times and $m \geq 1$.
The graph $G$ has $k$ basis forced vertices and $\dim (G) = k+m-1$.
\end{lem}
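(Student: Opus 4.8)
The plan is to analyze the complement structure component by component, using the twin lemma and a direct resolving-set argument analogous to the $P_4$ case. Write $\cg{G} = P_5^{(1)} \cup \cdots \cup P_5^{(k/2)} \cup \cg{K_m}$, and label the vertices of the $\ell$-th path component so that $u_1^{(\ell)} u_2^{(\ell)} u_3^{(\ell)} u_4^{(\ell)} u_5^{(\ell)}$ is a path in $\cg{G}$; since $P_5$ is not self-complementary I first record what $G$ looks like on these five vertices (the complement of $P_5$: $u_1$ adjacent to $u_3,u_4,u_5$; $u_2$ adjacent to $u_4,u_5$; $u_3$ adjacent to $u_5$; i.e.\ $G[V(P_5^{(\ell)})]$ is the graph in Figure \ref{fig:denseP5}). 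I also note that every vertex of one complement-component is adjacent in $G$ to every vertex of any other complement-component, and the $m$ vertices coming from $\cg{K_m}$ form a clique in $G$ whose vertices are pairwise true twins in $G$ (they have identical closed neighbourhoods), so by Lemma \ref{lem:twin} every resolving set of $G$ contains at least $m-1$ of them, and by Lemma \ref{lem:denseuniversal} none of them is basis forced (each such vertex is universal in $G$), which simultaneously handles the $\cg{K_m}$ part and shows the remaining $m-1$ forced-looking vertices there can be taken from any $m-1$ of the $m$.

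Next I would show that within each $P_5^{(\ell)}$ every resolving set of $G$ must contain at least two of $u_1^{(\ell)},\dots,u_5^{(\ell)}$ but can be taken to contain exactly two, and that the two possible choices are essentially $\{u_1^{(\ell)},u_2^{(\ell)}\}$ and $\{u_4^{(\ell)},u_5^{(\ell)}\}$ (symmetric under the complement's path-reversal automorphism). The lower bound: any vertex outside $V(P_5^{(\ell)})$ is at distance $1$ in $G$ from all five of $u_1^{(\ell)},\dots,u_5^{(\ell)}$, so only vertices of $V(P_5^{(\ell)})$ can resolve a pair inside it; and a single vertex of $V(P_5^{(\ell)})$ cannot resolve all of $\binom{5}{2}$ such pairs because in $G$ every $u_j^{(\ell)}$ has at most distances $\{0,1,2\}$ to the others, forcing a collision. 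The upper bound is the computational core: I verify that the two landmarks $u_1^{(\ell)},u_2^{(\ell)}$ resolve every pair $x,y$ with $x\in V(P_5^{(\ell)})$ — when $y\in V(P_5^{(\ell)})$ by checking the distance vectors $(d(u_1,\cdot),d(u_2,\cdot))$ are pairwise distinct on the five vertices, and when $y\notin V(P_5^{(\ell)})$ by noting $d(u_1,y)=d(u_2,y)=1$ while $(d(u_1,x),d(u_2,x))\neq(1,1)$ for every $x\in V(P_5^{(\ell)})$. Then, since the different components of $\cg{G}$ correspond to cut-vertex-free ``blocks'' joined completely, no extra landmarks are needed to resolve cross-component pairs: if $x\in V(P_5^{(\ell)})$, $y\in V(P_5^{(\ell')})$ with $\ell\ne\ell'$, or $y\in\cg{K_m}$, then one of the two landmarks in $P_5^{(\ell)}$ already separates them since it is at distance $1$ from $y$ but not from $x$ unless $x$ is that landmark (and then it trivially separates). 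Combining: a set is a metric basis of $G$ iff it consists of $m-1$ of the $\cg{K_m}$-vertices together with one of the two valid $2$-subsets inside each $P_5^{(\ell)}$; hence $\dim(G) = 2\cdot\frac k2 + (m-1) = k+m-1$.

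Finally, for the count of basis forced vertices: the $m-1$ chosen vertices among the $\cg{K_m}$-part can be any $m-1$ of the $m$, so none of those $m$ vertices is forced; and within each $P_5^{(\ell)}$ there are (at least) two distinct admissible $2$-subsets $\{u_1,u_2\}$ and $\{u_4,u_5\}$, and $\{u_1,u_5\}$ will also turn out admissible by the same kind of check, so that each of $u_1^{(\ell)},u_2^{(\ell)},u_4^{(\ell)},u_5^{(\ell)}$ lies outside some metric basis while $u_3^{(\ell)}$ never appears in any admissible $2$-subset. That gives exactly one basis forced vertex, namely $u_3^{(\ell)}$, per path component, for a total of $\frac k2$ — which does not match the claimed $k$. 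Because of this discrepancy I expect the main obstacle to be pinning down the correct labelling convention and the exact set of admissible $2$-subsets of $P_5$: the intended statement presumably identifies the two forced vertices in each $P_5$-component (the two ``inner'' vertices at positions that every resolving pair must hit), so the careful case analysis of which pairs of the five vertices resolve $G[V(P_5^{(\ell)})]$ — and showing precisely two vertices of each component are unavoidable — is the crux, after which the cross-component and $\cg{K_m}$ arguments are routine bookkeeping with Lemmas \ref{lem:twin}, \ref{lem:densecomplete}, \ref{lem:denseuniversal} and the block decomposition idea used in Theorem \ref{thm:construction}.
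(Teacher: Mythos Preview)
Your approach has the right overall architecture (component-by-component analysis, twin lemma for $\cg{K}_m$, internal resolving within each $P_5$), but the computations in the crux paragraph are wrong, and this leads you to miss the actual mechanism that produces $k$ forced vertices rather than $k/2$.

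First, the pair $\{u_1^{(\ell)},u_2^{(\ell)}\}$ does \emph{not} resolve $V(P_5^{(\ell)})$: one has $d(u_1,u_4)=d(u_1,u_5)=1$ and $d(u_2,u_4)=d(u_2,u_5)=1$, so $u_4$ and $u_5$ receive the same vector $(1,1)$. By the path-reversal symmetry, $\{u_4,u_5\}$ fails as well. The correct list of two-element subsets that resolve the five vertices internally is
\[
\{u_1,u_3\},\ \{u_1,u_5\},\ \{u_2,u_3\},\ \{u_2,u_4\},\ \{u_3,u_4\},\ \{u_3,u_5\}.
\]
So your claim that ``$(d(u_1,x),d(u_2,x))\neq(1,1)$ for every $x\in V(P_5^{(\ell)})$'' is false, and your tentative identification of $u_3^{(\ell)}$ as the unique forced vertex in each block is off.

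Second, and this is the key missing idea, your plan treats ``the pair resolves $V(P_5^{(\ell)})$'' as sufficient for that pair to appear in a metric basis. It is not. Among the six internal resolvers above, only $\{u_2,u_4\}$ has the additional property that \emph{no} $x\in V(P_5^{(\ell)})$ satisfies $d(u_2,x)=d(u_4,x)=1$. For each of the other five pairs $S$ there is such an $x$ (e.g.\ $S=\{u_1,u_3\}$ gives $x=u_5$), and then $x$ is indistinguishable by $S$ from every vertex of the $\cg{K}_m$ block; since the $\cg{K}_m$ vertices are mutual twins at distance $1$ from $x$, separating $x$ from each of them forces \emph{all} $m$ of them into $R$. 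That pushes $|R|\ge k+m$, so such $R$ cannot be a metric basis. Consequently the only metric bases use $\{u_2^{(\ell)},u_4^{(\ell)}\}$ in every $P_5$ block together with any $m-1$ of the $m$ universal vertices, and the basis forced vertices are precisely $u_2^{(\ell)}$ and $u_4^{(\ell)}$ for each $\ell$, giving $2\cdot\tfrac{k}{2}=k$ of them. Your discrepancy was not a labelling convention; it was this missing interaction between the $P_5$ blocks and the $\cg{K}_m$ block.
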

\begin{proof}
The graph $G$ for $k=2$ and $m=1$ is illustrated in Figure \ref{fig:denseP5}.

Let us denote the copies of $P_5$ by $P_5^i$ where $i=1,\ldots , \frac{k}{2}$.
We denote the vertices of each $P_5^i$ by $v_1^i, \ldots , v_5^i$ where the vertices are numbered in the order natural for paths.
We further denote the vertices of $\cg{K}_m$ by $u_j$.

It is clear that in order to resolve all pairs $x,y \in V(P_5^i)$ in $G$, we need at least two elements of $V(P_5^i)$.
The vertices $u_j$ are twins with one another.
Thus, according to Lemma \ref{lem:twin}, there exists at most one $u_j$ such that $u_j \notin R$ for any resolving set $R$.
Therefore, $\dim (G) \geq 2 \cdot \frac{k}{2} +m-1 = k+m-1$.

Let us consider how we can resolve the elements of $V(P_5^i)$ in $G$. 
It is easy to verify that the only two element subsets of $V(P_5^i)$ that resolve $V(P_5^i)$ are $\{v_1^i,v_3^i\}$, $\{v_1^i,v_5^i\}$, $\{v_2^i,v_3^i\}$, $\{v_2^i,v_4^i\}$, $\{v_3^i,v_4^i\}$ and $\{v_3^i,v_5^i\}$.

Let $R \subseteq V(G)$ be such that $R \cap V(P_5^i) = \{v_2^i,v_4^i\}$ for all $i$ and $R \cap V(\cg{K}_m) = V(\cg{K}_m) \setminus \{u_j\}$ for some $j$.
Let us show that $R$ is a resolving set of $G$.
To that end, consider distinct vertices $x,y \in V(G)$.
If $x,y \in V(P_5^i)$ for some $i$, then the set $R$ resolves $x$ and $y$ by the observation above.
Suppose that $x \in V(P_5^i)$ and $y \notin V(P_5^i)$ for some $i$.
Now, $d(v_2^i,y) = d(v_4^i,y) = 1$.
One of the distances $d(v_2^i,x)$ and $d(v_4^i,x)$ is 0 or 2 for any $x \in V(P_5^i)$.
Thus, we have $d(v_2^i,x) \neq d(v_2^i,y)$ or $d(v_4^i,x) \neq d(v_4^i,y)$.
Suppose finally that $x,y \notin V(P_5^i)$ for all $i$.
In other words, $x,y \in V(\cg{K}_m)$.
Since $u_j \notin R$ for only one vertex $u_j \in V(\cg{K}_m)$, the vertices $x$ and $y$ are resolved by some element of $R$.
Therefore, the set $R$ is a resolving set of $G$.
Since $|R| = 2 \cdot \frac{k}{2} + m-1$, the set $R$ is a metric basis of $G$ and $\dim (G) = k+m-1$.

Suppose then that $R$ is a resolving set of $G$.
Let us denote $S=R \cap V(P_5^i)$.
Suppose that $|S|=2$ and $S \neq \{v_2^i,v_4^i\}$ for some $i$.
(In other words, the intersection $R \cap V(P_5^i)$ is one of the five listed earlier that are not $\{v_2^i,v_4^i\}$.) 
It is easy to verify (by e.g. making a $5 \times 5$-table of distances) that there exists a vertex $x \in V(P_5^i)$ such that $d(s,x) = 1$ for all $s\in S$ if and only if $S \neq \{v_2^i,v_4^i\}$. 
Thus, since $S \neq \{v_2^i,v_4^i\}$, we have $d(s,x) = d(s,u_j)$ for all $s\in S$ and $u_j \in V(\cg{K}_m)$. 
Now $u_j \in R$ for all $j$, since $R$ is a resolving set of $G$ and $d(u_l, u_j) = 1 = d(u_l,x)$ for all $l\neq j$. 
We have $|R| \geq 2 \cdot \frac{k}{2} + m = k + m$, since $|R \cap V(P_5^j)| \geq 2$ for all $j \in \{1, \ldots , \frac{k}{2} \}$ as we pointed out earlier. 
Consequently, $R$ is not a metric basis of $G$.

In conclusion, a set $R \subseteq V(G)$ is a metric basis of $G$ if and only if $R \cap V(P_5^i) = \{v_2^i,v_4^i\}$ for all $i = 1, \ldots , \frac{k}{2}$ and $R \cap V(\cg{K}_m) = V(\cg{K}_m) \setminus \{u_j\}$ for some $j \in \{1, \ldots , m\}$.
Consequently, each $v_2^i$ and $v_4^i$ is a basis forced vertex of $G$ and $G$ has $k$ basis forced vertices in total.
\end{proof}
%-----------------------------------

Now we are ready for the main result in this section.

\begin{thm}\label{thm:denseedgebound}
Let $G$ be a connected graph with $n \geq 6$ vertices and at least one basis forced vertex.
Then 
\[ |E(G)| \leq \frac{n(n-1)}{2} - 4. \]
If $|E(G)| = \frac{n(n-1)}{2} - 4$, then $\cg{G} \simeq P_5 \cup \cg{K}_{n-5}$ and $G$ has two basis forced vertices.
\end{thm}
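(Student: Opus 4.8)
The plan is to pass to the complement graph and show that $|E(\cg{G})| \geq 4$, with equality forcing $\cg{G} \simeq P_5 \cup \cg{K}_{n-5}$. Since $|E(G)| + |E(\cg{G})| = \binom{n}{2}$, the inequality $|E(G)| \leq \frac{n(n-1)}{2}-4$ and its equality characterisation are exactly equivalent to these statements about $\cg{G}$.

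First I would prove $|E(\cg{G})| \geq 4$ by contradiction: assume $|E(\cg{G})| \leq 3$ and look at the connected components of $\cg{G}$. An isolated vertex of $\cg{G}$ is a universal vertex of $G$, hence not basis forced by Lemma \ref{lem:denseuniversal}. Every component of $\cg{G}$ carrying an edge has at most three edges, so it is isomorphic to one of $K_2$, $K_{1,2}=P_3$, $P_4$, $K_{1,3}$, $K_3$ --- precisely the pieces (a)--(e) of Figure \ref{fig:denseclean}. Lemma \ref{lem:densecomplete} handles $K_2$ and $K_3$, Lemma \ref{lem:densestar} handles $K_{1,2}$ and $K_{1,3}$, and Lemma \ref{lem:densepath4} handles $P_4$ (its hypothesis $|V(G)| \geq 5$ is satisfied since $n \geq 6$). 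In each case no vertex of the component is basis forced, so $G$ has no basis forced vertex, a contradiction. Hence $|E(\cg{G})| \geq 4$, i.e. $|E(G)| \leq \frac{n(n-1)}{2}-4$.

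For the equality case, assume $|E(\cg{G})| = 4$. If these four edges were spread over two or more components of $\cg{G}$, each such component would again have at most three edges and, by the argument just given, would contribute no basis forced vertex; together with the universal vertices coming from the isolated vertices of $\cg{G}$, this would leave $G$ with no basis forced vertex. So all four edges lie in a single component $\cg{G}_1$, which is therefore one of the five connected graphs with exactly four edges: $P_5$, the ``fork'' (the tree on five vertices with one vertex of degree $3$, piece (g)), $K_{1,4}$, $C_4$, and the ``paw'' ($K_3$ with a pendant, piece (j)). Lemma \ref{lem:densestar} excludes $K_{1,4}$ and Lemma \ref{lem:densecycle4} excludes $C_4$; I would then rule out the fork and the paw directly. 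In both of these the two leaves of $\cg{G}_1$ attached to a common vertex become twins in $G$ (false twins for the paw, true twins for the fork), so by Lemma \ref{lem:twin} every resolving set of $G$ contains at least one of them, and they are interchangeable in any metric basis. For each remaining vertex $v$ of $\cg{G}_1$ one then exhibits a partner vertex that resolves in $G$ every pair which $v$ alone resolves within a given metric basis, and concludes via the replacement technique $R[v \leftarrow \cdot]$ that $v$ is not basis forced. Thus $\cg{G}_1 \simeq P_5$, i.e. $\cg{G} \simeq P_5 \cup \cg{K}_{n-5}$, and finally Lemma \ref{lem:denseP5} with $k=2$ and $m = n-5 \geq 1$ gives $\dim(G) = n-4$ together with the fact that $G$ has exactly two basis forced vertices.

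I expect the main obstacle to be the two ad hoc cases --- the fork and the paw. Although each reduces to a twin observation plus a replacement argument of the same flavour as Lemmas \ref{lem:densecomplete}--\ref{lem:densecycle4}, the bookkeeping of ``which pairs does this vertex alone resolve'' must be carried out carefully, because the part of $G$ outside $V(\cg{G}_1)$ is arbitrary, and one has to confirm that the proposed replacement vertex covers all of those pairs (and, symmetrically, that no forced choice survives). A secondary point needing care is the completeness of the enumeration: one should check that pieces (a)--(j) of Figure \ref{fig:denseclean} really are all the connected graphs with between one and four edges, so that the case analysis covering $|E(\cg{G})| \leq 4$ is exhaustive.
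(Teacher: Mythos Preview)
Your proposal is correct and follows the same route as the paper: pass to $\cg{G}$, enumerate the possible connected components with at most four edges, and invoke Lemmas~\ref{lem:denseuniversal}--\ref{lem:denseP5} to eliminate everything except $P_5$. The only substantive difference is how the fork (piece~(g)) and the paw (piece~(j)) are dispatched: rather than a twin-plus-replacement argument, the paper simply exhibits explicit metric bases witnessing that no vertex is forced --- for the fork, $V(G)\setminus\{v_2,v_3,v_5\}$ and $V(G)\setminus\{v_1,v_4,u_i\}$; for the paw, it shows that either $\{v_1,v_3\}$ or $\{v_2,v_4\}$ already resolves all pairs meeting $V(\cg{G}_1)$. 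This direct route sidesteps exactly the bookkeeping you flag as the main obstacle. One small slip to fix: in the paw $\cg{G}_1$ has only one leaf; the pair of twins you want in $G$ are the two degree-$2$ vertices of the triangle, not leaves of $\cg{G}_1$.
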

\begin{proof}
Let us consider $G$ as constructed from $K_n$ by removing edges. 

Suppose that $G$ is obtained from $K_n$ by removing up to three edges.
Then the graph $\cg{G}$ can be presented as a union of some of the following graphs (see Figure \ref{fig:denseclean} (a-e)): $K_2$, $K_{1,2}$, $P_4$, $K_{1,3}$, $K_3$ and $\cg{K}_{n-k}$ for some $k$.
Lemmas \ref{lem:denseuniversal}, \ref{lem:densecomplete}, \ref{lem:densestar} and \ref{lem:densepath4} cover all possible cases, and in each case the graph $G$ does not have any basis forced vertices.

We have now shown that if $|E(G)| \geq \frac{n(n-1)}{2} - 3$, then the graph $G$ does not have any basis forced vertices.
Consequently, if $G$ has basis forced vertices, then $|E(G)| \leq \frac{n(n-1)}{2} - 4$.

Let us then prove the latter claim.
Let $G$ be a graph that is obtained from $K_n$ by removing four edges.
Suppose that $\cg{G}$ can be presented as the union of some of the following graphs (see Figure \ref{fig:denseclean}(a-e,h,i)): $K_2$, $K_{1,2}$, $P_4$, $K_{1,3}$, $K_3$, $K_{1,4}$, $C_4$ and $\cg{K}_{n-k}$ for some $k$.
As before, we can use Lemmas \ref{lem:denseuniversal}, \ref{lem:densecomplete}, \ref{lem:densestar}, \ref{lem:densepath4} and now \ref{lem:densecycle4} also, and obtain that in all cases the graph $G$ does not have any basis forced vertices.
Thus, if $G$ has basis forced vertices, then there are only three possibilities for the edge deletion pattern: the graphs in Figure \ref{fig:denseclean}(f,g,j).
Let us consider each case separately.

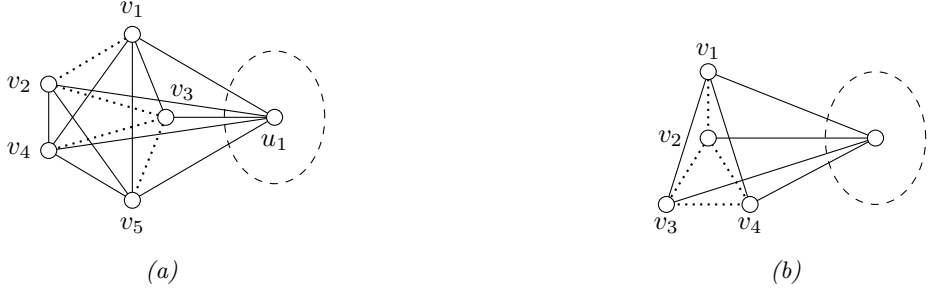
\begin{figure}
\centering
\begin{subfigure}[b]{0.45\linewidth}
\centering
\begin{tikzpicture}[scale=1.1]
	\coordinate (v1) at (.3,1);
	\coordinate (v2) at (-.7,.4);
	\coordinate (v3) at (.7,0);
	\coordinate (v4) at (-.7,-.4);
	\coordinate (v5) at (.3,-1);
	\coordinate (r1) at (2,0);
	\draw[dotted, thick] (v1) -- (v2) -- (v3) -- (v4) (v3) -- (v5);
	\draw (v4) -- (v1) -- (v3)
		  (v1) -- (v5) -- (v2) -- (v4) -- (v5);
	\foreach \x in {1,...,5} {
		\draw (v\x) -- (r1);
	};
	\draw[dashed] (2,0) ellipse (.6cm and .8cm);
	\draw \foreach \x in {(v1),(v2),(v3),(v4),(v5),(r1)}{
 		\x node[circle, draw, fill=white,
                        inner sep=0pt, minimum width=6pt] {}
 	};
 	\draw {
 		(.3,1.3) node[] {$v_1$}
 		(-1.05,.4) node[] {$v_2$}
 		(.9,.3) node[] {$v_3$}
 		(-1.05,-.4) node[] {$v_4$}
 		(.3,-1.3) node[] {$v_5$}
 		(2,-.3) node[] {$u_1$}
 	};
 \end{tikzpicture}
\caption{ }\label{fig:densespecial1}
\end{subfigure}
\hfill
\begin{subfigure}[b]{0.45\linewidth}
\centering
\begin{tikzpicture}[scale=1.1]
	\coordinate (v1) at (0,.8);
	\coordinate (v2) at (0,0);
	\coordinate (v3) at (-.5,-.8);
	\coordinate (v4) at (.5,-.8);
	\coordinate (r1) at (2,0);
	\draw[dotted, thick] (v1) -- (v2) -- (v3) -- (v4) -- (v2);
	\draw (v4) -- (v1) -- (v3);
	\foreach \x in {1,...,4} {
		\draw (v\x) -- (r1);
	};
	\draw[dashed] (2,0) ellipse (.6cm and .8cm);
	\draw \foreach \x in {(v1),(v2),(v3),(v4),(r1)}{
 		\x node[circle, draw, fill=white,
                        inner sep=0pt, minimum width=6pt] {}
 	};
 	\draw {
 		(0,1.05) node[] {$v_1$}
 		(-.45,0) node[] {$v_2$}
 		(-.5,-1.05) node[] {$v_3$}
 		(.5,-1.05) node[] {$v_4$}
 	};
 \end{tikzpicture}
\caption{ }\label{fig:densespecial2}
\end{subfigure}
\caption{Example illustrations of the two special cases of the proof of Theorem \ref{thm:denseedgebound}. The graphs may have any (nonempty) structure inside the dashed area.}
\end{figure}

Suppose that $\cg{G}$ is the disjoint union of the graph in Figure \ref{fig:densecleanspec1} and $\cg{K}_{n-5}$.
Let us denote the elements as in Figure \ref{fig:densespecial1}, that is, the isolated vertices are denoted by $u_i$ and the five other vertices are denoted by $v_i$. 
It is easy to verify that the sets $V(G) \setminus \{ v_2, v_3, v_5\}$ and $V(G) \setminus \{ v_1, v_4, u_i\}$ are metric bases for any $i$.
Thus, the graph $G$ does not have any basis forced vertices.

Suppose that $\cg{G}$ is the disjoint union of the graph in Figure \ref{fig:densecleanspec2} and $\cg{K}_{n-4}$.
Let us denote the elements of $V(\cg{G}_i)$ by $v_1,\ldots,v_4$ as in Figure \ref{fig:densespecial2}. 
It is clear that in order to resolve  $V(\cg{G}_i)$ in $G$, we need at least two elements of $V(\cg{G}_i)$.
It is easy to verify that $\{v_1,v_3\}$ and $\{v_2,v_4\}$ do the job.
Both of these options also resolve any pair $x \in V(\cg{G}_i)$ and $y \in V(G) \setminus V(\cg{G}_i)$.
Indeed, for any $x \in V(\cg{G}_i)$ one of the distances $d(x,v_1)$ and $d(x,v_3)$ is 2 or 0 (and similarly for $v_2$ and $v_4$). 
Thus, the graph $G$ does not have any basis forced vertices.

Thus, if $G$ has basis forced vertices, then $\cg{G} \simeq P_5 \cup \cg{K}_{n-5}$.
The graph $G$ is illustrated in Figure \ref{fig:dense2bforced}.
According to Lemma \ref{lem:denseP5}, this graph does indeed have exactly two basis forced vertices.
\end{proof}

It is a bit odd that the densest graphs that have basis forced vertices have two of them.
If a graph $G$ has one basis forced vertex, then $|E(G)| \leq \frac{n(n-1)}{2} - 5$.
A graph that attains this bound is illustrated in Figure \ref{fig:dense1bforced}.
The black vertex is the basis forced vertex.
The metric dimension of this graph is 2.
It has two metric bases; in addition to the black vertex we must include either of the gray vertices.

If a graph has three or more basis forced vertices, then the densest graph that we have found has $\frac{n(n-1)}{2} - 7$ edges.
It seems that in general the bound in Theorem \ref{thm:denseedgebound} leaves room for improvement.
Indeed, in Theorem \ref{thm:genedgebound} we will show a more general bound on the number of edges that depends also on the number of basis forced vertices that the graph contains.

\begin{figure}[ht!]
\begin{subfigure}[b]{0.45\linewidth}
\centering
\begin{tikzpicture}[scale=.6]
%---EDGES-------
	\draw (0,2) -- (1,3) -- (1,1) -- (0,2) -- (-1,3) -- (-1,1) -- (0,2) -- (0,0) -- (1,1) -- (-1,1) -- (0,0) (-1,3) -- (1,3);
%---VERTICES----
 	\draw \foreach \x in {(0,0),(-1,1),(1,1),(0,2)} {
 		\x node[circle, draw, fill=white, inner sep=0pt, minimum width=6pt] {}
 	};
 	\draw \foreach \x in {(-1,3),(1,3)} {
 		\x node[circle, draw, fill=black, inner sep=0pt, minimum width=6pt] {}
 	};
 \end{tikzpicture}
 \caption{The unique graph with 6 vertices that attains the bound in Theorem \ref{thm:denseedgebound}}\label{fig:dense2bforced}
\end{subfigure}
\hfill
\begin{subfigure}[b]{0.45\linewidth}
\centering
\begin{tikzpicture}[scale=.6]
%---EDGES-------
	\draw (0,2) -- (1,3) -- (1,1) -- (0,2) -- (-1,3) -- (-1,1) -- (0,2) -- (0,0) -- (1,1) -- (-1,1) (1,3) -- (-1,3);
%---VERTICES----
 	\draw \foreach \x in {(0,0),(-1,1),(1,1),(0,2)} {
 		\x node[circle, draw, fill=white, inner sep=0pt, minimum width=6pt] {}
 	};
 	\draw \foreach \x in {(-1,3)} {
 		\x node[circle, draw, fill=black, inner sep=0pt, minimum width=6pt] {}
 	};
 	\draw \foreach \x in {(1,3),(-1,1)} {
 		\x node[circle, draw, fill=gray!75, inner sep=0pt, minimum width=6pt] {}
 	};
 \end{tikzpicture}
 \caption{A graph with one basis forced vertex and $\frac{n(n-1)}{2} -5$ edges.}\label{fig:dense1bforced}
\end{subfigure}
\caption{ }
\end{figure}

\section{The Colour Graph $G_R$}\label{sec:Colour}

Let $G$ be a graph and $R \subseteq V(G)$.
Let $r \in R$.
We denote 
\[
\U_R (r) = \{ \{x,y\} \in V(G)^2 \ | \ d(r,x) \neq d(r,y) \text{ and } \forall \, t \in R \setminus \{r\} \colon d(t,x) = d(t,y) \}.
\]
The set $\U_R (r)$ consists of the pairs of vertices for which $r$ is the unique element in $R$ that resolves the pairs.

We denote by $G_R$ the graph with the same vertex set as $G$ and the edge set
\[
\bigcup\limits_{r\in R} \U_R (r).
\]
Each $r \in R$ is assigned a colour, and we colour the edges in $G_R$ given by $\U_R (r)$ with the colour associated with $r$.

\begin{ex}
Consider again the familiar graph $G$ illustrated in Figure \ref{fig:colourExG}.
Let us construct the graph $G_R$ with respect to the resolving set $R=\{r_1,r_2\}$.
We have $\U_R (r_1) = \{\{r_1,v_1\}, \{r_1,v_3\},\{v_1,v_3\}\},\{v_2,v_4\}\}$ and $\U_R (r_2) = \{\{r_2,v_1\}, \{r_2,v_2\},\{v_1,v_2\}\},\{v_3,v_4\}\}$.
When we assign black to $r_1$ and 'dashed' to $r_2$, we can visualise the graph $G_R$ as in Figure \ref{fig:colourExGR}.
\end{ex}

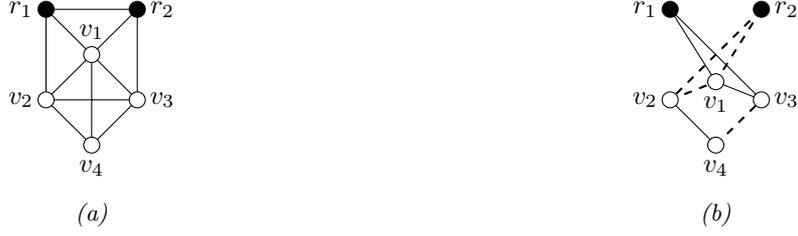
\begin{figure}
\centering
\begin{subfigure}[b]{0.45\linewidth}
\centering
\begin{tikzpicture}[scale=.6]
%---EDGES-------
	\draw (0,2) -- (1,3) -- (1,1) -- (0,2) -- (-1,3) -- (-1,1) -- (0,2) -- (0,0) -- (1,1) -- (-1,1) -- (0,0) (-1,3) -- (1,3);
%---VERTICES----
 	\draw \foreach \x in {(0,0),(-1,1),(1,1),(0,2)} {
 		\x node[circle, draw, fill=white, inner sep=0pt, minimum width=6pt] {}
 	};
 	\draw \foreach \x in {(-1,3),(1,3)} {
 		\x node[circle, draw, fill=black, inner sep=0pt, minimum width=6pt] {}
 	};
%---LABELS------
	\draw {
		(-1.55,3) node[] {$r_1$}
		(1.55,3) node[] {$r_2$}
		(0,-.5) node[] {$v_4$}
		(-1.55,1) node[] {$v_2$}
		(1.55,1) node[] {$v_3$}
		(0,2.5) node[] {$v_1$}
	};
 \end{tikzpicture}
 \caption{ }\label{fig:colourExG}
\end{subfigure}
\hfill
\begin{subfigure}[b]{0.45\linewidth}
\centering
\begin{tikzpicture}[scale=.6]
%---EDGES-------
	\draw (-1,3) -- (1,1) -- (0,1.4) -- (-1,3) (0,0) -- (-1,1);
	\draw[dashed,thick] (1,3) -- (-1,1) -- (0,1.4) -- (1,3) (0,0) -- (1,1);
%---VERTICES----
 	\draw \foreach \x in {(0,0),(-1,1),(1,1),(0,1.4)} {
 		\x node[circle, draw, fill=white, inner sep=0pt, minimum width=6pt] {}
 	};
 	\draw \foreach \x in {(-1,3),(1,3)} {
 		\x node[circle, draw, fill=black, inner sep=0pt, minimum width=6pt] {}
 	};
%---LABELS------
	\draw {
		(-1.55,3) node[] {$r_1$}
		(1.55,3) node[] {$r_2$}
		(0,-.5) node[] {$v_4$}
		(-1.55,1) node[] {$v_2$}
		(1.55,1) node[] {$v_3$}
		(0,.9) node[] {$v_1$}
	};
 \end{tikzpicture}
 \caption{ }\label{fig:colourExGR}
\end{subfigure}
\caption{The graph $G$ with the resolving set $R = \{r_1,r_2\}$ and the colour graph $G_R$.}\label{fig:colourEx}
\end{figure}

Let us explore some basic properties of $G_R$.
If there is no edge between $x$ and $y$ in $G_R$, then either $R$ does not resolve $x$ and $y$ or there are at least two elements in $R$ that resolve $x$ and $y$.
If $R$ is a resolving set, then only the latter is possible. 

Since both $r\in R$ and $s \in R$ resolve the pair $r,s$, the edge $\{r,s\}$ is not present in $G_R$.
Consequently, the set $R$ is independent in $G_R$.
Moreover, if there does exist an edge incident to $r\in R$ in $G_R$, then that edge has the colour associated with $r$, since $r$ resolves any pair where $r$ itself is included.

If the colour associated with some $r\in R$ is not present in $G_R$, then any pair that $r$ resolves is resolved by some other element of $R$.
Thus, the set $R \setminus \{r\}$ is a resolving set of $G$.
Consequently, if $R$ is a metric basis of $G$, then the graph $G_R$ has at least one edge of the colour associated with each $r \in R$.
In other words, the set $\U_R (r)$ is nonempty for all $r\in R$.

\begin{lem}\label{lem:colourprops}
Let $G$ be a graph.
The following properties hold.
\begin{enumerate}
\item[(i)] Any colour that appears in a cycle of $G_R$ appears at least twice in that cycle.

\item[(ii)] Let $R$ be a resolving set of $G$ and $x,y,z \in V(G)$.
If the edges $\{x,y\}$ and $\{x,z\}$ have the same colour in $G_R$, then the edge $\{y,z\}$ also has the same colour in $G_R$.

\item[(iii)] If $b \in V(G)$ is a basis forced vertex of $G$ and $R$ is a metric basis of $G$, then the graph $G_R$ has at least two edges of the colour associated with $b$.

\item[(iv)] If $b \in V(G)$ is a basis forced vertex of $G$ and $R$ is a metric basis of $G$, then the graph $G_R$ has at least one edge $\{x,y\}$, $x,y \in V \setminus R$, of the colour associated with $b$.
\end{enumerate}
\end{lem}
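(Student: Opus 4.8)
The plan is to prove the four items in turn, relying on the structural observations recorded just before the statement: the set $R$ is independent in $G_R$; every edge of $G_R$ incident to some $r\in R$ carries the colour associated with $r$; and, when $R$ is a metric basis, $\U_R(r)\neq\emptyset$ for every $r\in R$. I will also use the replacing technique $R[r\leftarrow v]$ from Section~\ref{sec:Cut-vertices}.

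For (i) I argue by contradiction. If a colour $r$ (so $r\in R$) appears on exactly one edge $\{x_i,x_{i+1}\}$ of a cycle $x_1x_2\cdots x_\ell x_1$ of $G_R$, then every other edge $\{x_j,x_{j+1}\}$ of the cycle has a colour $c\neq r$, whence $d(r,x_j)=d(r,x_{j+1})$ because $\{x_j,x_{j+1}\}\in\U_R(c)$ and $r\in R\setminus\{c\}$; chaining these equalities along the part of the cycle that avoids $\{x_i,x_{i+1}\}$ forces $d(r,x_i)=d(r,x_{i+1})$, contradicting $\{x_i,x_{i+1}\}\in\U_R(r)$. (This step uses nothing about $R$ resolving $G$.) For (ii), if $\{x,y\}$ and $\{x,z\}$ both have colour $r$ with $y\neq z$, then $d(t,y)=d(t,x)=d(t,z)$ for every $t\in R\setminus\{r\}$; since $R$ resolves $G$, the pair $y,z$ must be separated by $r$ itself, so $\{y,z\}\in\U_R(r)$.

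For (iii), note that $b\in R$ (as $b$ is basis forced) and $\U_R(b)\neq\emptyset$ (as $R$ is a metric basis), so $G_R$ has at least one edge of colour $b$. Suppose it had exactly one, say $\{x,y\}$. Since an edge incident to a vertex of $R$ carries that vertex's colour, neither $x$ nor $y$ lies in $R$ unless it equals $b$; as $x\neq y$, at least one endpoint, call it $v$, satisfies $v\neq b$, and then $v\notin R$, while $v$ still separates $x$ and $y$ (one of them is at positive distance from $v$, and $d(v,v)=0$). Because $\{x,y\}$ is the only pair that $b$ uniquely resolves, the replacing technique makes $R[b\leftarrow v]$ a metric basis, and it omits $b$ since $v\neq b$ — contradicting that $b$ is basis forced. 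Hence there are at least two edges of colour $b$. For (iv), take two such edges: if one is not incident to $b$, then by the same colour argument its endpoints lie outside $R$ and we are done; otherwise they are $\{b,y_1\}$ and $\{b,y_2\}$ with $y_1\neq y_2$, and applying (ii) at the common vertex $b$ shows that $\{y_1,y_2\}$ has colour $b$ and is not incident to $b$, so $y_1,y_2\notin R$, giving the desired edge.

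The distance-chasing in (i) and (ii) is routine; the delicate step is (iii) — and with it (iv) — where one has to check that the replacement vertex $v$ simultaneously lies outside $R$ (so that $R[b\leftarrow v]$ genuinely swaps $b$ out rather than merely shrinking $R$) and differs from $b$, and that having a single edge of colour $b$ is precisely what reduces the hypothesis of the replacing technique to the one condition $d(v,x)\neq d(v,y)$. This is where the argument could most easily go wrong.
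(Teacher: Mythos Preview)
Your proof is correct and follows essentially the same approach as the paper's own proof: the distance-chaining for (i) and (ii) is identical, and for (iii)--(iv) you use the same replacement-at-an-endpoint idea together with (ii) to manufacture an edge of colour $b$ with both endpoints outside $R$. The only cosmetic difference is that the paper phrases (iv) as a proof by contradiction whereas you argue constructively from the two edges guaranteed by (iii); the underlying logic is the same.
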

\begin{proof}
(i) Suppose to the contrary that the edge $\{v_1,v_k\}$ is the only edge of its colour (associated with $r\in R$) in the cycle $v_1 v_2 v_3 \ldots v_k v_1$, $k\geq 3$.
Since the colour of the edge $\{v_i,v_{i+1}\}$ where $i \in \{1, \ldots , k-1 \}$ is not the colour associated with $r$, we have $d(r,v_i)=d(r,v_{i+1})$ for all $i \in \{1,\ldots,k-1\}$.
Consequently, $d(r,v_1) = d(r,v_k)$ and $\{v_1,v_k\} \notin \U_R (r)$, a contradiction.

(ii) For all $s \in R \setminus \{r\}$ we have $d(s,y) = d(s,x) = d(s,z)$.
As $R$ is a resolving set of $G$, we have $d(r,y) \neq d(r,z)$.
Thus, $\{y,z\} \in \U_R (r)$.

(iii) Suppose $\U_R (b) = \{\{u,v\}\}$.
Since every pair $r,s \in R$ is resolved by both $r$ and $s$, we can assume that $v \notin R$.
The vertex $v$ resolves the pair $\{u,v\}$ and all other pairs are resolved by the elements of $R \setminus \{b\}$.
Thus, the set $R[b \leftarrow v]$ is a metric basis of $G$, a contradiction.

(iv) Suppose to the contrary that no such edge exists.
According to (iii) the graph $G_R$ has at least two edges of the colour associated with $b$.
Let these edges be $\{x_1,x_2\}$ and $\{y_1,y_2\}$.
Due to our assumption, both of these edges have at least one endpoint in $R$.
If there is an edge incident to $r \in R$ in $G_R$, then that edge has the colour associated with $r$, since $r$ resolves any pair $r,x$, where $x \in V(G)$.
Thus, we may assume that $x_1 = b = y_1$.
Since every pair $r,s \in R$ is resolved by both $r$ and $s$, we have $x_2 \notin R$ and $y_2 \notin R$.
Now, the edge $\{x_2,y_2\}$ has the colour associated with $b$ due to (ii), a contradiction.
\end{proof}

Suppose that $R$ is a resolving set of $G$. 
Due to Lemma \ref{lem:colourprops} (i) and (ii), all cliques of $G_R$ are monochromatic.
The subgraph of $G_R$ that consists of only the edges of the colour associated with $r \in R$ is a disjoint union of cliques.

Lemma \ref{lem:denseuniversal} states that universal vertices are not basis forced vertices.
We can obtain the same result using Lemma \ref{lem:colourprops} (iv): if $u$ is a universal vertex of $G$ and $R$ is a metric basis of $G$ that contains $u$, then $d(u,x) = d(u,y)$ for all $x,y \in V(G) \setminus R$ and (iv) cannot be satisfied for $u$.

\begin{thm}\label{thm:nminusbeta}
If $G$ is a graph with $n$ vertices and $k>0$ basis forced vertices, then $k \leq n - \dim (G) - 1$.
Moreover, we have $k \leq \frac{n-1}{2}$.
\end{thm}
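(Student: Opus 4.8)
The plan is to argue through the colour graph $G_R$. First I would fix a metric basis $R$ of $G$, so that $|R| = \dim(G)$, and let $B$ denote the set of basis forced vertices; then $|B| = k$ and, by Definition \ref{def:basisforced}, $B \subseteq R$. Since $|V(G)\setminus R| = n - \dim(G)$, the inequality $k \le n - \dim(G) - 1$ is exactly the statement that there is a forest on the $n - \dim(G)$ vertices of $V(G)\setminus R$ having $k$ edges, so the goal is to build such a forest.

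The construction goes as follows. For every $b \in B$, Lemma \ref{lem:colourprops}(iv) supplies an edge $e_b = \{x_b,y_b\}$ of $G_R$ with $x_b,y_b \in V(G)\setminus R$ whose colour is the one associated with $b$. Because each edge of $G_R$ carries a single colour (an edge lying in $\U_R(r)$ cannot also lie in $\U_R(r')$ for $r' \neq r$, since membership in $\U_R(r)$ forces $d(r',x)=d(r',y)$), the assignment $b \mapsto e_b$ is injective, so the graph $H$ with vertex set $V(G)\setminus R$ and edge set $\{e_b : b \in B\}$ has exactly $k$ edges. The crucial step is to verify that $H$ is acyclic: any cycle in $H$ would be a cycle in $G_R$, and by Lemma \ref{lem:colourprops}(i) every colour appearing on that cycle would have to appear on it at least twice — impossible, as $H$ contains at most one edge of each colour. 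Hence $H$ is a forest on $n - \dim(G)$ vertices, and a forest has at most one fewer edge than it has vertices, giving $k = |E(H)| \le n - \dim(G) - 1$.

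For the second bound I would simply combine this with the containment $B \subseteq R$, which yields $k = |B| \le |R| = \dim(G)$; adding the two inequalities gives $2k \le (n - \dim(G) - 1) + \dim(G) = n - 1$, i.e.\ $k \le \frac{n-1}{2}$. The only genuinely delicate point in the whole argument is the acyclicity of $H$, and that is precisely what part (i) of Lemma \ref{lem:colourprops} provides; everything else is bookkeeping about colours together with the elementary edge count for forests. It is worth noting that part (iv) of Lemma \ref{lem:colourprops}, rather than the weaker part (iii), is what is needed, since it is the property that the edges $e_b$ can be chosen to avoid $R$ entirely that makes the relevant vertex count $n - \dim(G)$ instead of $n$.
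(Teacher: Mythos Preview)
Your argument is correct and follows precisely the paper's approach: pick one edge of each basis-forced colour inside $V(G)\setminus R$ via Lemma~\ref{lem:colourprops}(iv), use Lemma~\ref{lem:colourprops}(i) to rule out cycles among these $k$ differently coloured edges, and conclude via the forest edge bound, then combine with $k \le \dim(G)$. The paper phrases the acyclicity step contrapositively (``if $m\le k$ then some of these edges form a cycle''), but the content is identical.
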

\begin{proof}
Let $R$ be a metric basis of $G$.
Denote $U = V \setminus R$ and $m = |U|$.
Due to Lemma \ref{lem:colourprops} (iv), $G_R[U]$ contains at least $k$ differently coloured edges.
Let us consider $k$ differently coloured edges in $G_R[U]$.
If $m \leq k$, then some (or all) of these edges form a cycle that contradicts Lemma \ref{lem:colourprops} (i).
Thus, $m \geq k+1$ and $k \leq m-1 = n - \dim (G) - 1$.
Since $k \leq \dim (G)$, we also have $k \leq \frac{n-1}{2}$.
\end{proof}

We have not found a graph that attains this bound.
However, for the graph $G$ described in Lemma \ref{lem:denseP5} we have $k = n - \dim (G) -2$ when $k=2$.
Indeed, let $G$ be such that $\cg{G} \simeq P_5 \cup \cg{K}_{n-5}$.
According to Lemma \ref{lem:denseP5}, the graph $G$ has two basis forced vertices and $\dim (G) = n-4$.

\begin{thm}\label{thm:genedgebound}
Let $G$ be a connected graph with $n\geq 3$ vertices.
If $G$ has $k > 0$ basis forced vertices, then 
\[ |E(G)| \leq \frac{n(n-1)}{2} - 2k. \]
\end{thm}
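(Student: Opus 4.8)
The plan is to prove the equivalent statement that the complement $\cg{G}$ has at least $2k$ edges. Fix a metric basis $R$ of $G$. Every basis forced vertex lies in every metric basis, so writing $B$ for the set of the $k$ basis forced vertices and $U = V(G)\setminus R$, we have $B\subseteq R$ and $B\cap U=\emptyset$. The heart of the argument is the following claim: \emph{every $b\in B$ is non-adjacent in $G$ to at least two vertices of $U$.} Granting this, every such non-edge joins a vertex of $B$ to a vertex of $U$, so the non-edges obtained from distinct $b\in B$ are pairwise distinct; hence $|E(\cg{G})|\geq \sum_{b\in B}\bigl|\{u\in U : b\not\sim u\}\bigr|\geq 2k$, which rearranges to the asserted bound.

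To prove the claim I would argue by contradiction, supposing some $b\in B$ has at most one non-neighbour in $U$. If $b$ is adjacent to every vertex of $U$, then $d(b,u_1)=d(b,u_2)=1$ for all $u_1,u_2\in U$, so $\U_R(b)$ contains no pair lying inside $U$, contradicting Lemma~\ref{lem:colourprops}(iv). So suppose $b$ has exactly one non-neighbour $w\in U$; then $d(b,w)\geq 2$ while $d(b,u)=1$ for every other $u\in U$. One first checks that any pair in $\U_R(b)$ has at most one endpoint in $R$ (two elements of $R$ resolve each other) and that such an endpoint can only be $b$ itself (otherwise that endpoint resolves the pair); combining this with the distance pattern of $b$ on $U$ and with Lemma~\ref{lem:colourprops}(ii) (monochromatic edge classes are disjoint cliques), one sees that $\U_R(b)$ is contained in $\{\{b,w\},\{b,z\},\{w,u\}\}$ for at most one choice of $u,z\in U\setminus\{w\}$. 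Lemma~\ref{lem:colourprops}(iii) forces $|\U_R(b)|\geq 2$ and Lemma~\ref{lem:colourprops}(iv) forces $\{w,u\}\in\U_R(b)$, so after applying (ii) once more if necessary, $\U_R(b)$ is either the monochromatic triangle $\{\{b,w\},\{b,u\},\{w,u\}\}$ or the two-edge set $\{\{b,z\},\{w,u\}\}$ with $z,u,w$ distinct.

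In each case I would use the replacement technique to contradict the hypothesis that $b$ is basis forced. In the triangle case, if $R[b\leftarrow w]$ were not a metric basis then $w$ must fail to resolve $\{b,u\}$, i.e.\ $d(w,u)=d(w,b)=2$; if $R[b\leftarrow u]$ were not a metric basis then $u$ must fail to resolve $\{b,w\}$, i.e.\ $d(u,w)=d(u,b)=1$; these are incompatible, so one of $R[b\leftarrow w]$, $R[b\leftarrow u]$ is a metric basis avoiding $b$. In the two-edge case, irreplaceability by $z$, by $u$ and by $w$ would force respectively $d(z,w)=d(z,u)$, $d(u,z)=d(u,b)=1$ and $d(w,z)=d(w,b)=2$, which together are contradictory; hence one of $R[b\leftarrow z]$, $R[b\leftarrow u]$, $R[b\leftarrow w]$ is a metric basis avoiding $b$. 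Either way we contradict the assumption, establishing the claim and the theorem.

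The step I expect to be the main obstacle is the last one: pinning down the possibilities for $\U_R(b)$ tightly enough, using parts (ii)--(iv) of Lemma~\ref{lem:colourprops}, that the replacement argument closes every sub-case. It is essentially a streamlined, $k$-independent analogue of the ad hoc case checks that appear in the proof of Theorem~\ref{thm:denseedgebound}, and one must be careful that the narrowing of $\U_R(b)$ and the final distance contradictions are watertight (including the small cases $|U|\le 2$, where the same reasoning still produces a contradiction with Lemma~\ref{lem:colourprops}(iii)--(iv)).
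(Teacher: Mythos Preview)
Your proposal is correct and follows essentially the same route as the paper's proof: fix a metric basis $R$, show that every basis forced vertex $b$ has at least two non-neighbours in $U=V(G)\setminus R$, and deduce $|E(\cg G)|\ge 2k$. The case split (unique non-neighbour $w$, then ``triangle'' versus ``two disjoint edges'' for $\U_R(b)$) and the replacement arguments match the paper's exactly, with your $(w,u,z)$ playing the role of the paper's $(u,y,w)$. One small inaccuracy: where you write $d(w,b)=2$ you only know $d(w,b)\ge 2$; however your contradiction in each case only uses the inequality, so the argument goes through unchanged.
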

\begin{proof}
Let us denote the basis forced vertices of $G$ by $v_1, \ldots , v_k$.
Due to Lemma \ref{lem:denseuniversal}, none of the vertices $v_i$ are universal vertices.
Thus, for each $v_i$ there exists an $x \in V(G)$ such that the edge $v_i x$ is an edge of $\cg{G}$.

Let $R$ be a metric basis of $G$.
Due to Lemma \ref{lem:colourprops} (iv), for any $v_i$ there exist distinct $x,y \in V(G) \setminus R$ such that $d(v_i,x) \neq d(v_i,y)$ and $d(r,x) = d(r,y)$ for all $r \in R \setminus \{v_i\}$.
Thus, at least one of the edges $v_i x$ and $v_i y$ is an edge in $\cg{G}$.
Therefore, every basis forced vertex $v_i$ is not adjacent to some vertex $u$ that is not a basis forced vertex.

Suppose that $v_i u$ is the only edge in $\cg{G}$ from $v_i$ to $V(G) \setminus R$.
Now, $d(v_i,v) = 1$ for all $v \in V(G) \setminus R$ such that $v\neq u$.

As we stated earlier, according to Lemma \ref{lem:colourprops} (iv) there exist distinct $x,y \in V(G) \setminus R$ such that $d(v_i,x) \neq d(v_i,y)$ and $d(r,x) = d(r,y)$ for all $r \in R \setminus \{v_i\}$.
Now, either $x=u$ or $y=u$, since otherwise we have $d(v_i,x) = 1 = d(v_i,y)$, a contradiction.

Suppose without loss of generality $x=u$.
Then $y$ is unique due to Lemma \ref{lem:colourprops} (ii) and the fact that $d(v_i,v) = 1$ for all $v \in V(G) \setminus R$, $v \neq u$. 
According to Lemma \ref{lem:colourprops} (iii) the graph $G_R$ has at least two edges of the colour associated with $v_i$.
The edge $uy$ is one such edge.
However, the edge $uy$ is the only edge within $V(G) \setminus R$ of the colour associated with $v_i$.
Since the set $R$ is independent in $G_R$, there exists a vertex $w \in V(G) \setminus R$ such that the edge $v_i w$ has the colour associated with $v_i$ in $G_R$.

Suppose that $w \in \{u,y\}$.
Then according to Lemma \ref{lem:colourprops} (ii) both edges $v_i u$ and $v_i y$ have the colour associated with $v_i$ in $G_R$.
Now, the three edges $u y$, $v_i u$ and $v_i y$ are the only edges of the colour associated with $v_i$ in $G_R$.
Otherwise, we have a contradiction due to Lemma \ref{lem:colourprops} (ii) and the fact that $d(v_i,v) = 1$ for all $v \in V(G) \setminus R$, $v \neq u$.
Let us show that now $v_i$ is not a basis forced vertex.
If the set $R[v_i \leftarrow u]$ is a metric basis, then we are done.
Hence, suppose that the set $R[v_i \leftarrow u]$ is not a metric basis of $G$.
The only pairs not resolved by $R \setminus \{v_i\}$ are $\{u,y\}$, $\{v_i,u\}$ and $\{v_i,y\}$.
Since $u$ clearly resolves the first two pairs, we have $d(u,v_i)=d(u,y)$.
However, now $d(y,u) \neq d(y,v_i) = 1$, and the set $R[v_i \leftarrow y]$ is a metric basis of $G$.
Thus, $R[v_i \leftarrow u]$ or $R[v_i \leftarrow y]$ is a metric basis of $G$, and $v_i$ is not a basis forced vertex.

Suppose then that $w \notin \{u,y\}$.
Now the vertex $w$ is unique.
Otherwise, we again have a contradiction due to Lemma \ref{lem:colourprops} (ii) and the fact that $d(v_i,v) = 1$ for all $v \in V(G) \setminus R$, $v \neq u$.
Let us show that now $v_i$ is not a basis forced vertex.
The set $R \setminus \{v_i\}$ resolves all but two pairs of vertices: $v_i,w$ and $u,y$.
Now, if some vertex $z \neq v_i$ resolves both of these pairs, then the set $R[v_i \leftarrow z]$ is a metric basis of $G$ and $v_i$ is not a basis forced vertex.
Let us show that such a vertex $z$ exists.

If $d(w,u) \neq d(w,y)$, then the vertex $w$ resolves the two aforementioned pairs and $R[v_i \leftarrow w]$ is a metric basis of $G$.
Suppose that $d(w,u) = d(w,y)$.
Since $d(v_i,w) = 1 = d(v_i,y)$, we have $d(w,y) \leq d(w,v_i) + d(v_i,y) = 2$.
If $d(w,u) = d(w,y) = 1$, then $d(u,v_i) \neq d(u,w)$ and the set $R[v_i \leftarrow u]$ is a metric basis of $G$.
If $d(w,u) = d(w,y) = 2$, then $d(y,v_i) \neq d(y,w)$ and the set $R[v_i \leftarrow y]$ is a metric basis of $G$.
In all cases, the set $R[v_i \leftarrow z]$ is a metric basis of $G$ for some $z \in \{w,u,y\}$, and thus the vertex $v_i$ is not a basis forced vertex of $G$.

In conclusion, for any basis forced vertex $v_i$ there exist at least two edges from $v_i$ to $V(G) \setminus R$ in $\cg{G}$.
Since $G$ has $k$ basis forced vertices, we have $|E(G)| \leq \frac{n(n-1)}{2} - 2k$.
\end{proof}

Let $G$ be a graph such that $\cg{G} \simeq P_5 \cup \cdots \cup P_5 \cup \cg{K}_{m}$, where $P_5$ appears $\frac{k}{2}$ times in the disjoint graph union.
According to Lemma \ref{lem:denseP5} the graph $G$ has $k$ basis forced vertices, two in each complement of $P_5$.
The following corollary is now immediate.

\begin{cor}
For every even positive integer $k$ there exists a graph $G$ with $k$ basis forced vertices and 
\[|E(G)| = \frac{n(n-1)}{2} - 2k \]
where $n$ is the number of vertices of $G$.
\end{cor}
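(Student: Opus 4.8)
The plan is to exhibit the graph already constructed in Lemma \ref{lem:denseP5} and simply count its edges, since the hard work has all been done. Fix an even positive integer $k$ and let $G$ be a connected graph whose complement satisfies $\cg{G} \simeq P_5 \cup \cdots \cup P_5 \cup \cg{K}_{m}$, with $P_5$ appearing $\frac{k}{2}$ times and $m \geq 1$ arbitrary; for concreteness one may take $m = 1$. By Lemma \ref{lem:denseP5}, the graph $G$ has exactly $k$ basis forced vertices, so the only thing left to check is the edge count.

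First I would record the number of vertices, $n = |V(G)| = 5\cdot\frac{k}{2} + m$. Next I would count the edges of the complement: each copy of $P_5$ contributes $4$ edges and $\cg{K}_m$ contributes none, so $|E(\cg{G})| = 4 \cdot \frac{k}{2} = 2k$. Finally, using $|E(G)| + |E(\cg{G})| = \frac{n(n-1)}{2}$, I conclude $|E(G)| = \frac{n(n-1)}{2} - 2k$, which is exactly the claimed value.

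There is no genuine obstacle here; the corollary is an immediate consequence of Lemma \ref{lem:denseP5} combined with the elementary edge-counting identity for complement graphs. The only point worth emphasising in the write-up is that this value matches precisely the upper bound $|E(G)| \leq \frac{n(n-1)}{2} - 2k$ of Theorem \ref{thm:genedgebound}, so that bound is sharp for every even $k$, and letting $m$ range over all positive integers yields infinitely many extremal examples for each such $k$.
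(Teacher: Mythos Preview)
Your proposal is correct and follows exactly the paper's approach: the corollary is stated as immediate from Lemma~\ref{lem:denseP5}, and you have simply made the implicit edge count explicit by noting that $\cg{G}$ has $4\cdot\frac{k}{2}=2k$ edges. There is nothing to add.
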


The bound in Theorem \ref{thm:genedgebound} does not seem attainable for odd $k$.
However, we have found a graph family whose members have an odd ($\geq 3$) number of basis forced vertices and $\frac{n(n-1)}{2} - 2k -1$ edges.
Let $H$ be the graph we obtain by attaching a pendant to the middle vertex of the path $v_1 \ldots v_7$.
If $G$ is a graph such that $\cg{G} \simeq H \cup \cg{K}_m$, then the graph $G$ has $\frac{n(n-1)}{2} - 2\cdot 3 -1$ edges and three basis forced vertices: $v_2$, $v_4$ and $v_6$ (this can be shown using similar techniques as in the proof of Lemma \ref{lem:denseP5}).
Moreover, it is possible to show that the graph $G$ for which $\cg{G} \simeq H \cup P_5 \cup \cdots \cup P_5 \cup \cg{K}_m$ where $P_5$ appears $\frac{k-3}{2}$ times has three basis forced vertices in $V(H)$ and two basis forced vertices in each $V(P_5)$.
All in all, the graph $G$ has $3 + 2 \cdot \frac{k-3}{2} = k$ basis forced vertices and $\frac{n(n-1)}{2} - 7 - 4 \cdot \frac{k-3}{2} = \frac{n(n-1)}{2} - 2k -1$ edges.

\section{Algorithmic complexity}\label{sec:Complexity}

In this section, we consider the algorithmic complexity of determining whether a given vertex is a basis forced or void vertex of a metric basis. In particular, we show that the first problem is co-NP-hard and the latter problem is NP-hard. The proofs are based on a polynomial-time reduction from the well-known $3$-SAT problem. Previously, in~\cite{Khuller96}, it has been shown that given an arbitrary graph $G=(V,E)$ and an integer $k$, it is NP-complete to decide whether $\dim(G) \leq k$. The reductions of the proofs of this section are inspired by the one presented in~\cite{Khuller96}. In what follows, this reduction is briefly recapped.

For the $3$-SAT problem, denote the set of variables by $X = \{x_1, x_2, \ldots, x_n\}$ and the set of literals by $U = \{x_1, x_2, \ldots, x_n, \overline{x_1}, \overline{x_2}, \ldots, \overline{x_n}\}$, where $\overline{x_i}$ denotes the negation of the variable $x_i$. Let $F$ be an instance of the $3$-SAT problem; more precisely, let $F$ be a formula $F = \mathcal{C}_1 \wedge \mathcal{C}_2 \wedge \cdots \wedge \mathcal{C}_m$, where each clause $\mathcal{C}_j$ contains exactly three literals, i.e., each clause is of the form $\mathcal{C}_j = u_{j,1} \vee u_{j,2} \vee u_{j,3}$, where $u_{j,1}, u_{j,2}, u_{j,3} \in U$. Based on the given formula $F$, we form a graph $G=(V,E)$ as follows:
\begin{itemize}
\item For each variable $x_i \in X$, we construct a variable gadget of $x_i$ with vertices $a_{i,1}$, $a_{i,2}$, $b_{i,1}$, $b_{i,2}$, $T_i$ and $F_i$ and edges as given in Figure~\ref{fig:variablegadget}.
\item For each clause $\mathcal{C}_j = u_{j,1} \vee u_{j,2} \vee u_{j,3}$, we construct a clause gadget $\mathcal{C}_j$ with vertices $c_{j,1}$, $c_{j,2}$, $c_{j,3}$, $c_{j,4}$, and $c_{j,5}$ and edges as given in Figure~\ref{fig:clausegadget}. Moreover, if $u_{j,k} = x_i$ (where $k=1,2,3$), then $c_{j,3}$ is adjacent to $F_i$, else $u_{j,k} = \overline{x_i}$ and $c_{j,3}$ is adjacent to $T_i$. In addition, $c_{j,3}$ is adjacent to both $T_i$ and $F_i$ for all variables $x_i$ not occurring in $\mathcal{C}_j$ and $c_{j,1}$ is adjacent to both $T_i$ and $F_i$ for any variables $x_i$ occurring in $\mathcal{C}_j$.
\end{itemize}
\begin{figure}
\centering
\begin{subfigure}[b]{0.45\linewidth}
\centering
\begin{tikzpicture}[scale=0.85]
\draw (0,0) -- (1,0) -- (2,-.5) -- (1,.8) -- (0,.8) -- (-1,-.5) -- (0,0);
\draw \foreach \x in {(0,0),(1,0),(0,.8),(1,.8),(-1,-.5),(2,-.5)} {
 	\x node[circle, draw, fill=white,
             inner sep=0pt, minimum width=6pt] {}
};
%----LABELS--------
\draw {
	(0,-.4) node[] {$a_{i,2}$}
	(1,-.4) node[] {$b_{i,2}$}
	(-.5,.8) node[] {$a_{i,1}$}
	(1.5,.8) node[] {$b_{i,1}$}
	(-1.4,-.5) node[] {$T_i$}
	(2.4,-.5) node[] {$F_i$}
};	
\end{tikzpicture}
\caption{The variable gadget of $x_i$.}\label{fig:variablegadget}
\end{subfigure}
\begin{subfigure}[b]{0.45\linewidth}
\centering
\begin{tikzpicture}[scale=0.85]
\draw \foreach \x in {(1,0),(-1,0),(.7,-.7),(-.7,-.7)} {
	(0,0) -- \x };
\draw \foreach \x in {(0,0),(1,0),(-1,0),(.7,-.7),(-.7,-.7)} {
	\x node[circle, draw, fill=white,
                 inner sep=0pt, minimum width=6pt] {}
};
%----LABELS--------
\draw {
	(0,.4) node[] {$c_{j,2}$}
	(1.5,0) node[] {$c_{j,3}$}
	(-1.5,0) node[] {$c_{j,1}$}
	(1.2,-.7) node[] {$c_{j,5}$}
	(-1.2,-.7) node[] {$c_{j,4}$}
};
\end{tikzpicture}
\caption{The clause gadget of $\mathcal{C}_j$.}\label{fig:clausegadget}
\end{subfigure}
\caption{ }
\end{figure}
It is clear that the graph $G$ can be constructed in polynomial time. In~\cite{Khuller96}, it is shown that the formula $F$ is satisfiable if and only if $\dim(G) = n+m$. This implies that the problem of deciding whether $\dim(G) \leq k$ is NP-complete.

Inspired by the previous reduction, we show in the following theorem that determining whether a given vertex is a basis forced one or a void one are algorithmically difficult.
\begin{thm}
Let $G$ be a graph and $u$ be a vertex of $G$.
\begin{itemize}
\item[(i)] Deciding whether $u$ is a basis forced vertex of $G$ is a co-NP-hard problem.
\item[(ii)] Deciding whether $u$ is a void vertex of $G$ is an NP-hard problem.
\end{itemize}
\end{thm}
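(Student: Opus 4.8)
The plan is to give two polynomial-time reductions from $3$-SAT, both built on the graph $G=G_F$ associated with a formula $F$ in the recap above, for which $\dim(G)=n+m$ exactly when $F$ is satisfiable. As a preliminary step I would record the complementary bound $\dim(G)\le n+m+1$ by exhibiting one explicit resolving set of that size, so that $\dim(G)\in\{n+m,n+m+1\}$ with the smaller value characterising satisfiability. The common device is then to graft onto $G$ a small gadget $H$ carrying a distinguished vertex $u$, attached (at several vertices, not merely at a cut-vertex) so that a vertex of $H$ can also resolve pairs inside $G$. The idea is that the single unit of ``slack'' that a resolving set of $G$ has precisely when $F$ is satisfiable is what decides the status of $u$ in the enlarged graph $G'$.

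For part (i), I would build $G'=G'(F)$ from $G$ and $H$ so that, for a fixed constant $c$, $\dim(G')=n+m+c$ whether or not $F$ is satisfiable, and moreover $u$ is engineered to be the only vertex that can simultaneously play the role of the otherwise unavoidable $(n+m+1)$-st resolver for $G$ and a mandatory role inside $H$. Then, if $F$ is satisfiable, one resolves $G$ with $n+m$ of its own vertices and $H$ with $c$ further vertices avoiding $u$, so $G'$ has a metric basis not containing $u$; whereas if $F$ is unsatisfiable, every resolving set of size $n+m+c$ must use $u$. Hence $u$ is a basis forced vertex of $G'$ if and only if $F$ is unsatisfiable; since the construction is polynomial and the set of unsatisfiable $3$-CNF formulas is co-NP-complete, deciding whether a vertex is basis forced is co-NP-hard.

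For part (ii) I would dually equip $H$ with $u$ and a mandatory vertex $w$ — forced into every metric basis, for example via a twin pair as in Lemma~\ref{lem:twin} — such that $w$ already resolves every pair that $u$ could resolve inside $H$, so $u$ is never needed there, while $w$ is also the cheap way to supply the extra resolving power that $G$ needs when $F$ is unsatisfiable; in that deficient case $H$ then requires a second vertex for which $u$ is an eligible choice. Consequently $u$ is strictly dominated by the always-present $w$ when $F$ is satisfiable, so $u$ lies in no metric basis (it is void), whereas some minimum resolving set contains $u$ when $F$ is unsatisfiable. Thus $u$ is void in $G'$ if and only if $F$ is satisfiable, so deciding whether a vertex is void is NP-hard.

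The heart of the argument, and the step I expect to be the main obstacle, is the explicit design of $H$ and of its attachment to $G$, together with verifying that the metric dimension behaves exactly as claimed in both the satisfiable and the unsatisfiable cases. This comes down to a careful case analysis of precisely which pairs of $G'$ can fail to be resolved — in the spirit of the proofs of Lemma~\ref{lem:denseP5} and Theorem~\ref{thm:genedgebound} — ensuring that $u$ acts as a perfect ``spare resolver'' for $G$ exactly in the deficient case, contributes nothing superfluous otherwise, and that the twin and cut-vertex structure of $H$ pins down all the remaining basis elements so that the counting stays tight.
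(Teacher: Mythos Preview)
Your high-level strategy---reduce from $3$-SAT through Khuller--Raghavachari--Rosenfeld's graph $G_F$ and bolt on a gadget carrying the distinguished vertex---is the right one, and it is also the paper's. But what you have written is a plan, not a proof: the gadget $H$ is never constructed, its attachment is never specified, and the entire verification is deferred to ``a careful case analysis'' that you acknowledge is the heart of the matter. As it stands there is nothing to check.

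There is also a concrete weak point in the plan itself. You assert as a preliminary step that $\dim(G_F)\le n+m+1$ for the original Khuller graph, to be witnessed by ``one explicit resolving set of that size''. This is not obvious and is very likely false in general: in $G_F$ the only vertices that can separate a pair $c_{j,1},c_{j,3}$ are those in the clause gadget of $\mathcal C_j$ or in the variable gadgets of the three literals of $\mathcal C_j$, so a \emph{single} extra vertex of $G_F$ cannot simultaneously repair all $m$ such pairs when $F$ is unsatisfiable. Your whole ``one unit of slack'' picture rests on this bound. The paper sidesteps the issue by \emph{adding} a brand-new vertex $w$ adjacent to every $c_{j,3}$, which does resolve all those pairs at once; that vertex is not in $G_F$.

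For comparison, the paper's construction is both simpler and handles (i) and (ii) with the \emph{same} graph and the \emph{same} vertex. Take two disjoint copies $G_1,G_2$ of $G_F$, add cross-edges from each $c^1_{j,1},c^1_{j,3}$ to every $T^2_i,F^2_i$ (and symmetrically), and add one new vertex $w$ adjacent to every $c^k_{j,3}$. One shows $\dim(G')=2n+2m$ iff $F$ is satisfiable and $\dim(G')\le 2n+2m+1$ always (the latter witnessed by the set of all $a^k_{i,1}$, all $c^k_{j,4}$, and $w$). Then $w$ is basis forced iff $F$ is unsatisfiable, and $w$ is void iff $F$ is satisfiable. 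The reason two copies are used is exactly to make the converse directions go through: if $w$ is omitted from a metric basis of size $2n+2m+1$, pigeonhole forces one copy to receive exactly $n+m$ basis vertices, and those must already resolve all of that copy's pairs $c_{j,1},c_{j,3}$, which yields a satisfying assignment. Note in particular that $\dim(G')$ is \emph{not} constant across the two cases, contrary to your design goal for part~(i); the paper's argument does not need that.
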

\begin{proof}
%Clearly, the problems of deciding whether a vertex $u$ of a graph $G$ is a basis forced one or a void one of a metric basis belong to NP. 
In order to prove the first claim~(i), we show that the problem of deciding whether a given $3$-SAT formula is \emph{not satisfiable} -- a co-NP-complete problem --  can be reduced in polynomial time to the problem of determining if a given vertex is basis forced one of a graph. For the second claim~(ii), we similarly prove that the problem of deciding whether a given $3$-SAT formula is \emph{satisfiable} -- an NP-complete problem -- can be reduced in polynomial time to the problem of determining if a given vertex is basis void one of a graph.

Using the notation introduced above, let $F$ be an instance of the $3$-SAT problem.
Based on the given formula $F$, we form a graph $G'=(V',E')$ as follows:
\begin{itemize}
\item Let $G_1 = (V_1, E_1)$ and $G_2 = (V_2, E_2)$ be two disjoint copies of the graph $G$ constructed in the reduction of~\cite{Khuller96} (with respect to $F$). Denote the vertices of the graphs by $a^k_{i,1}$, $a^k_{i,2}$, $b^k_{i,1}$, $b^k_{i,2}$, $T^k_i$, $F^k_i$, $c^k_{j,1}$, $c^k_{j,2}$, $c^k_{j,3}$, $c^k_{j,4}$ and $c^k_{j,5}$, where $k \in \{1,2\}$.
\item Furthermore, an edge is added from each $c^1_{j,1}$ and $c^1_{j,3}$ to $T^2_i$ and $F^2_i$ for all $i \in \{1, \ldots, n\}$. Analogously, an edge is added from each $c^2_{j,1}$ and $c^2_{j,3}$ to $T^1_i$ and $F^1_i$ for all $i \in \{1, \ldots, n\}$.
\item Finally, add a vertex $w$ such that it is adjacent to $c^k_{j,3}$ for all $k = 1, 2$ and $j = 1, \ldots, m$.
\end{itemize}
It is immediate that the graph $G' = (V', E') = (V_1 \cup V_2 \cup \{w\}, E')$ can be constructed in polynomial time. Before diving into the proofs of~(i) and~(ii), we need to discuss some preliminary results.

Let $R$ be a resolving set of $G'$. We first present the following simple observations, which mimic the ones given in Lemmas~A.2 and A.3 of~\cite{Khuller96}:
\begin{itemize}
\item[(a)] Observe that for $k = 1, 2$ and $i = 1, \ldots, n$, at least one of the vertices $a^k_{i,1}$, $a^k_{i,2}$, $b^k_{i,1}$ and $b^k_{i,2}$ belongs to the resolving set $R$. Indeed, suppose to the contrary that $R \cap \{a^k_{i,1}, a^k_{i,2}, b^k_{i,1}, b^k_{i,2}\} = \emptyset$. This implies that $a^k_{i,1}$ and $a^k_{i,2}$ are not resolved (a contradiction) since the corresponding variable gadget is connected to the rest of the graph only through the vertices $T^k_i$ and $F^k_i$.
\item[(b)] Observe that for $k = 1, 2$ and $j = 1, \ldots, m$, at least one of the vertices $c^k_{j,4}$ and $c^k_{j,5}$ belongs to the resolving set $R$. Indeed, if $R \cap \{c^k_{j,4}, c^k_{j,5}\} = \emptyset$, then $c^k_{j,4}$ and $c^k_{j,5}$ are clearly not resolved (a contradiction).
\end{itemize}
By the observations, we immediately obtain that the metric dimension of $G'$ is at least $2n+2m$. In what follows, we show that $\dim(G') = 2n + 2m$ if and only if the formula $F$ is satisfiable.

Let us first show that if the formula $F$ is satisfiable, then the metric dimension $\dim(G') = 2n+2m$. Let $A$ be a satisfiable truth assignment of $F$. Construct then a set $C$ as follows: $C$ consists of all the vertices $c^k_{j,4}$, where $k = 1,2$ and $j = 1, \ldots, m$, and if the assignment of $x_i$ is \emph{true} in $A$, then $a^1_{i,1}$ and $a^2_{i,1}$ belong to $C$, and otherwise $b^1_{i,1}$ and $b^2_{i,1}$ are in $C$. We immediately notice that $C$ contains exactly $2n+2m$ vertices. In what follows, we show that $C$ is a resolving set of $G'$. For this purpose, we first observe that all the pairs $x,y \in V'$ of distinct vertices except $c^k_{j,1}$ and $c^k_{j,3}$ are resolved by some element of $C$:
\begin{itemize}
\item Suppose first that $x = c^k_{j,l}$ with $l \in \{1, \ldots, 5\}$ and $y$ is any other vertex not in the same clause gadget as $x$. Then $d(c^k_{j,4}, x) \leq 2$ and $d(c^k_{j,4}, y) > 2$, and we are immediately done. Furthermore, if $y$ belongs to the same clause gadget as $x$, then some element of $C$ clearly resolves $x$ and $y$ unless the vertices are $c^k_{j,1}$ and $c^k_{j,3}$.
\item Suppose then that $x = w$. Now the distance of $x$ to any vertex of $C$ is exactly three. Clearly, this is not the case for any other vertex $y$ and we are done.
\item Finally, suppose that $x$ belongs to some variable gadget. Without loss of generality, we may assume that $x$ belongs to the copy of the gadget of $x_1$ in $G_1$ and that $a^1_{1,1} \in C$. Now it is immediate that if $y$ is in the same variable gadget as $x$, then either $a_{1,1}^1$ or any $c_{j,4}^2$ resolves $x$ and $y$. By the previous cases, we are also immediately done if $y$ does not belong to any variable gadget. Hence, we may assume that $y$ belongs to some variable gadget other than the one of $x_1$. Now, if $x \neq b^1_{1,2}$, then $d(a^1_{1,1},x) \leq 2$ and $d(a^1_{1,1},y) > 2$. Furthermore, if $x = b^1_{1,2}$ and $d(a^1_{1,1},x) = d(a^1_{1,1},y)$, then $d(a^1_{1,1},x) = d(a^1_{1,1},y) = 3$ and $y$ is equal to $T^k_i$ or $F^k_i$ for some $k$ and $i$. This further implies that there exists a vertex $c \in C$ in the same variable gadget as $y$ such that $d(c,y) \leq 2$ and $d(c,x) > 2$. Hence, we are done.
\end{itemize}
Thus, it is enough to consider the pairs of vertices $x  = c^k_{j,1}$ and $y  = c^k_{j,3}$. It is immediate that the distance of $c^k_{j,1}$ to any vertex of $C$ in the variable gadgets corresponding to $u_{j,1}$, $u_{j,2}$ and $u_{j,3}$ is equal to $2$. However, by the fact that $A$ is a satisfiable truth assignment of $F$ and the construction of $C$, the distance of $c^k_{j,3}$ to some vertex of $C$ in the variable gadgets corresponding to $u_{j,1}$, $u_{j,2}$ and $u_{j,3}$ is equal to $3$. Thus, in conclusion, $C$ is a resolving set of $G'$ and $\dim(G') = 2n+2m$.

Let us then show that if the metric dimension of $G'$ is $2n+2m$, then the formula $F$ is satisfiable. Let $C$ be a resolving set of $G'$ with $2n+2m$ vertices. Due to the observations~(a) and (b), we know that for each $k \in \{1,2\}$ and $i \in  \{1, \ldots, n\}$ exactly one of the vertices $a^k_{i,1}$, $a^k_{i,2}$, $b^k_{i,1}$ and $b^k_{i,2}$ belongs to $C$ and for each $k \in \{1,2\}$ and $j \in  \{1, \ldots, m\}$ exactly one of the vertices $c^k_{j,4}$and $c^k_{j,5}$ belongs to $C$. Form then a truth assignment $A$ of $F$ as follows: if $a^1_{i,1}$ or $a^1_{i,2}$ belongs to $C$, then set the variable $x_i$ to be \emph{true}, else ($b^1_{i,1}$ or $b^1_{i,2}$ belongs to $C$ and) set $x_i$ to be \emph{false}. In what follows, we show that the truth assignment $A$ satisfies the formula $F$. Suppose to the contrary that a clause $\mathcal{C}_j$ is not satisfied by $A$. This implies that the distance of $c^1_{j,1}$ and $c^1_{j,3}$ to any vertex of $C$ in the variable gadgets corresponding to $u_{j,1}$, $u_{j,2}$ and $u_{j,3}$ is equal to $2$. Furthermore, it is straightforward to verify that the distance of $c^1_{j,1}$ and $c^1_{j,3}$ to any vertex of $C$ in other variable gadgets is also equal to $2$ and that their distances are  equal to $4$ to any other vertices of $C$ (in the clause gadgets). Thus, they are not resolved by any element of $C$ and a contradiction follows. Thus, the truth assignment $A$ satisfies the formula $F$.

Let us next show that $\dim(G') \leq 2n+2m + 1$ regardless of the existence of a satisfiable truth assignment for the formula $F$. For this purpose, let $C$ be a set consisting of $w$ and all the vertices $a^k_{i,1}$ and $c^k_{j,4}$, where $k = 1,2$, $i = 1, \ldots, n$ and $j = 1, \ldots, m$. Clearly, the cardinality of $C$ is equal to $2n+2m+1$. As above, it can be shown that a pair of distinct vertices is resolved even without taking into account the vertex $w$ unless the pair is $c^k_{j,1}$ and $c^k_{j,3}$. However, it is immediate that they are resolved since $d(w, c^k_{j,1}) = 3 \neq 1 = d(w, c^k_{j,3})$. Therefore, the set $C$ is a resolving set of $G'$ and $\dim(G') \leq 2n+2m + 1$.

(i) Now we are ready to prove that $w$ is a basis forced vertex of $G'$ if and only if the formula $F$ is not satisfiable. Observe first that if $w$ is a basis forced vertex of $G'$, then by the observations~(a) and (b), we obtain that $\dim(G') \geq 2n+2m + 1$. Therefore, we have $\dim(G') = 2n+2m + 1$ and $F$ is not satisfiable. For the other direction, assume that $F$ is not satisfiable. Hence, we have $\dim(G') \neq 2n+2m$ implying $\dim(G') =2n+2m + 1$. Suppose to the contrary that $w$ is not a basis forced vertex of $G'$ and there exists a metric basis $C$ of $G'$ with $2n+2m + 1$ vertices such that $w \notin C$. By the observations~(a) and (b), we obtain that either $G_1$ or $G_2$ contains exactly $n+m$ vertices of $C$; without loss of generality, we may assume that $G_1$ is such a graph. Notice that $c^1_{j,1}$ and $c^1_{j,3}$ are resolved for all $j = 1, \ldots, m$ since $C$ is a metric basis of $G'$. As above, it can now be shown that $F$ is satisfiable (a contradiction). Therefore, $w$ is a basis forced vertex of $G'$. Thus, there exists a polynomial-time reduction of the complement of the $3$-SAT problem to the problem of deciding whether a given vertex is a basis forced vertex. Hence, the studied problem is co-NP-hard.

(ii) Let us then show that $w$ is a void vertex of $G'$ if and only if the formula $F$ is satisfiable. Observe first that if $F$ is satisfiable, then $\dim(G') = 2n+2m$. Hence, if $C$ is any metric basis of $G'$ (with cardinality $2n+2m$), then $w$ does not belong to $C$ by the observations~(a) and (b). Therefore, $w$ is a void vertex of $G'$. For the other direction, assume that $w$ is a void vertex of $G'$. Recall that $F$ is satisfiable if and only if $\dim(G') = 2n+2m$. Suppose to the contrary that $F$ is not satisfiable, i.e., $\dim(G') \neq 2n+2m$. This implies that $\dim(G') = 2n+2m+1$. Let $C$ be a metric basis of $G'$ (with cardinality $2n+2m+1$ and $w \notin C$). As above, we obtain that $G_1$ or $G_2$ contains exactly $n+m$ vertices of $C$; without loss of generality, we may assume that $G_1$ is such a graph. Analogously, as in the case~(i), we can show that $F$ is satisfiable since $w \notin C$ (a contradiction). Hence, if $w$ is a void vertex, then $F$ is satisfiable. Thus, there exists a polynomial-time reduction of the $3$-SAT problem to the problem of deciding whether a given vertex is a void vertex. Hence, the studied problem is NP-hard. 
\end{proof}

\section{Future Works}

Here are some open problems related to the questions in this paper.

\begin{itemize}
\item Find graphs with $k$ basis forced vertices but fewer edges than in our construction in Section \ref{sec:Sparse}.

\item Find a characterisation of graphs with $k$ basis forced vertices and metric dimension $k$.

\item Improve the bound $k \leq n - \dim (G) - 1$ in Theorem \ref{thm:nminusbeta} or find a graph attaining it.
\end{itemize}

\section*{Acknowledgements}

The last author (Ismael G. Yero) has been partially supported by the Spanish Ministry of Science and Innovation through the grant PID2019-105824GB-I00.

%\bibliographystyle{../abbrvdoi}
%\bibliography{../refbib}

\end{document}